\documentclass[11pt]{amsart}

\usepackage{epsfig}

\usepackage[utf8]{inputenc}
\usepackage[T1]{fontenc}

\usepackage{todonotes}
\usepackage{amssymb}
\usepackage{enumerate}
\usepackage{tikz-cd}
\usepackage{comment}
\usepackage{dsfont}

\usepackage{titlesec}
\titleformat{\section}{}{\thesection.}{0.8em}{\centering\scshape}
\titleformat{\subsection}{}{\thesubsection}{0.8em}{\scshape}

\usepackage[utf8]{inputenc}
\usepackage[T1]{fontenc}
\usepackage[english]{babel}
\usepackage[autostyle]{csquotes}

\usepackage{mathtools}
\usepackage{graphicx}
\graphicspath{ {./images/} }
\usepackage{caption}
\usepackage{subcaption}
\usepackage{xcolor}

\usepackage[shortlabels]{enumitem}
\usepackage[all]{xy}
\usepackage[colorlinks=true, urlcolor=rltblue, citecolor=drkgreen, linkcolor=drkred] {hyperref}
\usepackage{color}
\usepackage{pdfsync}
\definecolor{rltblue}{rgb}{0,0,0.4}
\definecolor{drkred}{rgb}{0.6,0,0}
\definecolor{drkgreen}{rgb}{0,0.4,0}
\usepackage{graphicx}
\usepackage{mathdots}
\usepackage{enumitem}
\setenumerate[1]{label=\arabic*.}

\newcommand{\orcidlink}[1]{%
  \href{https://orcid.org/#1}{https://orcid.org/#1}%
}

\usepackage{tikz-cd}
\usepackage{stmaryrd}
\usepackage{comment}
\usepackage{multicol}
\usepackage{adjustbox}

\newtheorem{mtheorem}{Theorem}
\newtheorem{theorem}{Theorem}[section]
\newtheorem{lemma}[theorem]{Lemma}

\newtheorem{corollary}[theorem]{Corollary}
\newtheorem{proposition}[theorem]{Proposition}

\theoremstyle{definition}
\newtheorem{example}[theorem]{Example}
\newtheorem{definition}[theorem]{Definition}

\newtheorem{remark}[theorem]{Remark}


\usepackage{lipsum}

\newcommand{\numberset}{\mathbb}
\newcommand{\N}{\numberset{N}}

\newcommand{\B}{\mathsf{B}}
\newcommand{\I}{\mathsf{I}}

\newcommand{\DBp}{\mathsf{DB_p}}
\newcommand{\DBpp}{\mathsf{DB^+_p}}
\newcommand{\DBi}[1]{\mathsf{DBi_{#1}}}
\newcommand{\DBin}{\DBi n}
\newcommand{\DBio}{\DBi 1}
\newcommand{\DBit}{\DBi 2}
\newcommand{\DBith}{\DBi 3}
\newcommand{\DBc}[1]{\mathsf{DBc_{#1}}}
\newcommand{\DBcn}{\DBc n}

\newcommand{\RCA}{\mathsf{RCA}_0}
\newcommand{\WKL}{\mathsf{WKL}_0}

\newcommand{\PI}[2]{\ensuremath{\boldsymbol\Pi^{#1}_{#2}}}
\newcommand{\SI}[2]{\ensuremath{\boldsymbol\Sigma^{#1}_{#2}}}
\newcommand{\DE}[2]{\ensuremath{\boldsymbol\Delta^{#1}_{#2}}}

\newcommand{\ran}{\operatorname{ran}}

\title{Reverse mathematics and dimension of posets}

\author{Alberto Marcone}
\address{
Dipartimento di Scienze Matematiche, Informatiche e Fisiche,\\
Universit\`a di Udine, Italy\\
ORCID: \orcidlink{0000-0001-8356-0086}
}
\email{alberto.marcone@uniud.it}
\urladdr{http://users.dimi.uniud.it/\textasciitilde alberto.marcone/}

\author{Andrea Volpi}
\address{
Dipartimento di Scienze Matematiche, Informatiche e Fisiche,\\
Universit\`a di Udine, Italy\\
ORCID: \orcidlink{0009-0008-2186-4103}
}
\curraddr{Department of Philosophy, University of Warsaw, Poland}
\email{andrea.volpi@uniud.it}
\urladdr{https://andreasdfghj.github.io/andreavolpi/}

\subjclass{03B30 (primary),  06A07, 03D30 (secondary)}
\keywords{Reverse mathematics, partial orders, dimension theory}

\thanks{Both authors were supported by the Italian PRIN 2022 ``Models, sets and classifications'', prot.\ 2022TECZJA.
Marcone was also supported by the Marie Sklodowska-Curie Staff Exchanges Grant Agreement n.101236610: New Frontiers for Computability.
We are also thankful to the Erwin Schrödinger International Institute for Mathematics and Physics (ESI) for hosting us during the Reverse Mathematics thematic program of summer 2025.
Marcone is a member of INdAM-GNSAGA}

\thanks{The early stages of this research were carried out by the first author and Marta Fiori-Carones, whom both authors thank.}

\thanks{We thank also Arno Pauly, who allowed us to include his proofs in the appendix, and Leszek Kołodziejczyk for useful discussions about non-standard models.
We are grateful to the anonymous referee for a careful reading of the paper and especially for the comments that lead us to clarify the status of the existence of dimension in weak subsystems of second order arithmetic.}

\begin{document}

\begin{abstract}
Order dimension theory measures the complexity of partially ordered sets by quantifying how far they are from being linearly ordered. 
In this paper we study classical bounding results for order dimension within the framework of reverse mathematics. 
We focus on principles asserting that the dimension of a poset can be bounded in terms of the dimension of subposets obtained by removing chains or points, denoted by $\DBin$, $\DBcn$, and $\DBp$. 
We prove that, over $\RCA$, both $\DBin$ and $\DBcn$ are equivalent to $\WKL$. 
To analyze $\DBp$, we introduce a natural strengthening $\DBpp$ and show that both $\DBp$ and $\DBpp$ are provable from $\WKL$ and from $\I\SI02$, while $\B \SI02$ does not suffice to prove $\DBpp$.
The latter result is obtained by showing that the statement \lq\lq $\DBpp$ is computably true\rq\rq\ is equivalent to $\I\SI02$.
\end{abstract}

\maketitle

\section{Introduction}

\textit{Order dimension theory} lies at the intersection of \textit{order theory} and \textit{combinatorics}, and it provides a framework for understanding the complexity of \textit{partially ordered sets} (posets). 
Informally, the dimension measures how \lq\lq far\rq\rq\ a poset is from being linearly ordered. 
The greater the number of linear extensions required to describe a poset, the more intricate its structure is.

The study of poset dimension began with the seminal work of Dushnik and Miller in \cite{dushnikmiller}, where the concept was introduced. 
After that, research in the area was pursued by Trotter, Hiraguchi, and many other researchers who explored combinatorial and structural properties of poset dimension. 
The field expanded rapidly through the 1970s and 1980s (see e.g. \cite{bakerfishburnroberts, bogarttrotter, bogart, trotter, rabinovitch2, rabinovitch1}). 
The monograph by Trotter \cite{trotter2} became a central reference in the area, collecting and organizing many results.
More recent papers in the area include \cite{FelsnerMutzeWittmann, bergman2025}.



Several structural parameters are related to dimension, such as cardinality, height and width.
For example, the dimension of a poset $(P, \preceq)$, is bounded above both by its width (see \cite{dilworth}) and by $|P|/2$ (see \cite{hiraguchi}).
These bounds are tight: the standard example of a poset of dimension $n$ consists of the 1-element and $(n-1)$-element subsets of an $n$-element set ordered by inclusion: it has $2n$ elements and dimension $n$.
In Definition \ref{fn} we call this poset $F_n$ and then we analyze its properties.

Other significant contributions come from work of Baker, who showed that the dimension of a poset is at least as great as its breadth (a proof can be found in \cite{fishburn}). 
This provides a lower bound, complementing the upper bounds by Dilworth and Hiraguchi. 

An important feature of dimension is its monotonicity: a subposet cannot have greater dimension than the parent poset. 
In addition, small changes to a poset cannot drastically alter its dimension. 
These features make the dimension robust under typical constructions and modifications of posets.

We are interested in the connection between order theory and the \textit{foundations of mathematics} through the framework of \textit{reverse mathematics}.
In this context, researchers analyze the logical strength of mathematical theorems by determining the minimal axiomatic systems needed to prove them. 
Many results in order theory — such as variants of Dilworth's theorem, initial interval separation for posets, $\mathsf{WQO}$ and $\mathsf{BQO}$ theory — have been studied within subsystems of second-order arithmetic (see e.g.\ \cite[\S9.2]{dzhafarovmummert}).
More specifically results about linearizations of posets are included in \cite{cholakmarconesolomon, FM12}.

In this paper we aim to study classical bounding results about order dimension from the point of view of reverse mathematics.
In particular, we focus on theorems that link the dimension of a poset to the dimension of a subposet obtained by removing chains (the statements we call $\DBin$ and $\DBcn$) or points (called $\DBp$).

Our main results are the following:

\begin{mtheorem}\label{main1}
Within $\RCA$, $\WKL$ is equivalent to both $\DBin$ and $\DBcn$ for every $n \geq 1$, as well as to both $\forall n\, \DBin$ and $\forall n\, \DBcn$.
\end{mtheorem}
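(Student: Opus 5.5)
The plan is to prove the two directions separately, with the forward direction uniform in $n$, so that it gives $\forall n\,\DBin$ and $\forall n\,\DBcn$ at once. The reversal will use only a single fixed $n$, say $n=1$, so that $\DBio$ (resp.\ $\DBc1$) already implies $\WKL$. Together with the trivial implications $\forall n\,\DBin \to \DBin$ and $\forall n\,\DBcn\to\DBcn$, this yields all the equivalences in Theorem \ref{main1}. Throughout I read $\DBin$ and $\DBcn$ as the countable forms of the classical bounds: if $C$ is a chain of $P$ and $P\setminus C$ has a realizer of size $n$, then $P$ has a realizer of size $n+2$, in the appropriate variant for each principle.

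\emph{$\WKL$ implies the principles.} First I check that the finite classical theorem is provable in $\RCA$. Its proof is finite combinatorics about finite posets, coded by numbers, and uses only bounded induction on the size of the poset, so it formalizes. Then I apply compactness. Let $P=\{p_0,p_1,\dots\}$, let $C$ be the chain, and let $L_1,\dots,L_n$ be the given realizer of $P\setminus C$. I build the tree $T$ whose nodes of length $k$ are $(n+2)$-tuples of linear orders on $\{p_0,\dots,p_{k-1}\}$ such that each order extends $\preceq$ and the tuple realizes $\preceq$ restricted to these points. Coherence is required under restriction.

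$T$ is computable from $P$ and is bounded: at level $k$ there are at most $(k!)^{n+2}$ nodes. It is infinite, because for each $k$ the finite theorem, applied to $\{p_0,\dots,p_{k-1}\}$ with the chain $C\cap\{p_0,\dots,p_{k-1}\}$ and the restrictions of the $L_i$, gives a node at level $k$. By $\WKL$, $T$ has a path, and the union along the path is a realizer of $P$ of size $n+2$. The same tree, with $n$ treated as a parameter, gives the universally quantified versions. If $\DBcn$ carries an extra requirement, such as that the new realizer must relate to $C$ in a prescribed way, I add it as a clopen condition on nodes, and the argument is unchanged.

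\emph{Reversal.} I prove $\WKL$ in its separation form. Let $f,g$ be one-to-one functions with disjoint ranges; from them I build a computable poset $P$ with a chain $C$. The construction has to meet two requirements. First, $P\setminus C$ must have an explicitly computable realizer of size $n$, which is needed so that the hypothesis of the principle holds in $\RCA$. Second, every realizer of $P$ of size $n+2$ must compute a separating set.

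For the construction I attach a gadget to each $e$. Each gadget is a copy of the standard example $F_{n+2}$ from Definition \ref{fn}, arranged so that removing the chain leaves only a piece that is easy to realize. I also add a distinguished incomparable pair $(a_e,b_e)$, glued to elements of $C$. While $e$ is outside $\ran f\cup\ran g$, both orientations of the pair remain possible in a tight realizer. If $e$ enters $\ran f$ at stage $s$, I add fresh comparabilities with points of $C$ introduced at stage $s$. Because $F_{n+2}$ has dimension exactly $n+2$, its critical pairs are distributed rigidly among the $n+2$ linear extensions, and these new comparabilities force $a_e$ below $b_e$ in the first linear extension of any realizer of size $n+2$. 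Entry into $\ran g$ forces the opposite orientation. Then $\{e : a_e <_{L_1} b_e\}$ is computable from the realizer and separates $\ran f$ from $\ran g$.

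The step I expect to be the main obstacle is designing this gadget. It must be checked, in $\RCA$, that removing $C$ always drops the dimension to $n$ with a realizer computable uniformly in the construction, even after late changes. At the same time, the forcing must survive the symmetry of realizers, because the linear extensions can be permuted. To deal with the symmetry, I would read off the index of the relevant extension from how the realizer handles the $F_{n+2}$ component of the gadget, instead of fixing it as $L_1$.
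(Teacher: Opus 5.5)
The genuine gap is in the reversal. You plan to prove only $\DBio\to\WKL$ and then invoke $\forall n\,\DBin\to\DBin$ and $\forall n\,\DBcn\to\DBcn$. But these implications give only $\WKL\to\DBin$ and $\WKL\to\DBcn$, not the converses.

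To get $\DBin\to\WKL$ for a fixed $n$ from your single reversal, you would need $\DBin\to\DBio$. The evident reduction (keep one chain and pad with empty chains) works only for $\DBit$, because $\max\{2,2\}=2$. For $n\ge3$, $\DBin$ gives only $\dim(P)\le\dim(P\setminus C)+n$, and for $n\ge2$, $\DBcn$ gives only $+2n$; neither returns the $+2$ of $\DBio$. So each of these principles needs its own coding in which the weaker bound is still tight, and this is where most of the work lies:
\begin{itemize}
\item For $\DBcn$ the paper removes $n$ chains running through all levels of $P^{2n+2}$. The complement then has a realizer of size $2$, so $\DBcn$ produces a realizer of size exactly $2n+2$ of a poset whose nontrivial levels are copies of $F_{2n+2}$ (Theorem \ref{reversalwbcn}).
\item For $\DBin$ it treats $n=3$ with $P^4$, one chain and two empty ones (Corollary \ref{reversaldbc3}). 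General $n\ge3$ needs a modified order, because in $P^k$ a chain meeting two levels is comparable with every other nonempty chain. In that modified order the chains $C_j=\{d^r_j,q^s_j : r,s\in\N\}$, $j<n$, are pairwise incomparable and the complement has dimension $2$. In any realizer of size $n+2$, $y^m\trianglelefteq_i x^m$ holds for at most $2$ indices $i$ if $m\in\ran(f)$, and for at least $n$ indices if $m\in\ran(g)$.
\end{itemize}

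Two further points. First, you have misread the principles. In $\DBin$ and $\DBcn$ the index $n$ is the number of removed chains (pairwise incomparable in $\DBin$), the additive constant is $\max\{2,n\}$, resp.\ $2n$, and the size $m$ of the given realizer of the complement is arbitrary. Your compactness argument for the forward direction is a legitimate alternative to the paper's direct use of Corollary \ref{uniformchomarsol} and Lemma \ref{chains}. It amounts to the implication from $\WKL$ to item (2) of Theorem \ref{th:dimension}, plus a finite theorem provable in $\RCA$. However, the finite theorem you need is the $n$-chain bound, which for $n\ge3$ is not classical and must still be proved, e.g.\ with linearizations putting $C_{[j]_n}$ at the bottom and $C_{[j+1]_n}$ at the top, as in Theorem \ref{hira-cn}.

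Second, even for $n=1$ your gadget is never actually specified. The permutation symmetry you worry about disappears if you count instead of locating a distinguished linearization. By Lemma \ref{lemmino}, in a realizer of $F_k$ of size $k$ each $b_i$ is placed below $a_i$ in exactly one linearization. Hence $|\{i<k : y^m\trianglelefteq_i x^m\}|$ is $1$ if $m\in\ran(f)$ and $k-1$ if $m\in\ran(g)$. No stagewise \lq\lq entry\rq\rq\ construction is needed either: putting $c^r_j,d^r_j$ at level $f(r)$ and $p^s_j,q^s_j$ at level $g(s)$ makes $P^k$ computable outright. For $k=4$, removing the chain $\{c^r_1,d^r_2,p^s_1,q^s_2 : r,s\in\N\}$ leaves a poset with an explicit computable realizer of size $2$.
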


To better analyze $\DBp$ we introduce its strengthening $\DBpp$, which is actually obtained in the usual proof of $\DBp$. 

\begin{mtheorem}\label{main2}
$\DBp$ and $\DBpp$ are proved by $\WKL$ and also by $\I\SI02$, and thus not equivalent to any of them.
$\B \SI02$ does not suffice to prove $\DBpp$.
\end{mtheorem}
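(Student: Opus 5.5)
The plan has two halves. First give two independent proofs of $\DBpp$, one in $\WKL$ and one in $\I\SI02$; since $\DBpp$ implies $\DBp$ over $\RCA$, this settles both principles. Then separate $\DBpp$ from $\B\SI02$ through the characterization ``$\DBpp$ is computably true $\iff \I\SI02$''. The non-equivalence claim is then immediate. $\WKL$ and $\I\SI02$ are incomparable over $\RCA$: $\WKL$ is $\Pi^1_1$-conservative over $\RCA$, so it does not prove $\I\SI02$, and the $\omega$-model $\mathrm{REC}$ satisfies $\I\SI02$ but not $\WKL$. Hence a statement provable from each of them can be equivalent to neither.

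\textbf{Proof from $\WKL$.} I would start from the classical argument for removing a point. Given a realizer $L_1,\dots,L_n$ of $P\setminus\{x\}$, split $P\setminus\{x\}$ into ${\downarrow}x$, ${\uparrow}x$ and the set of points incomparable with $x$. Insert $x$ into one linear extension immediately after ${\downarrow}x$ (so $x$ lies below every incomparable point), and into another immediately before ${\uparrow}x$ (so $x$ lies above every incomparable point). Each remaining $L_i$ gets $x$ in any consistent position.

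When this local construction is not effective in $\RCA$, I would use compactness instead. Build the tree of finite approximations to the required $n+1$ (or $n+k$) linear orders on $P$ that satisfy the strengthened requirements of $\DBpp$. The classical finite construction shows that every finite level has a node, so the tree is infinite. A path, supplied by $\WKL$, is the desired realizer.

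\textbf{Proof from $\I\SI02$.} Here I would run the classical construction effectively. The noneffective ingredient is a finite amount of $\SI02$ information, for instance which of the finitely many removed points, or which of the finitely many relevant cuts of the $L_i$, are bounded or eventually stable. $\I\SI02$, used through its equivalent $\SI02$ bounded comprehension (every bounded $\SI02$ set is finite), guesses this information correctly. With the information fixed, the insertion of the points is computable in the given realizer. I expect the verification that a finite piece of $\SI02$ data suffices to be routine once the exact formulation of $\DBpp$ is in place.

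\textbf{$\B\SI02$ does not suffice.} This is the step I expect to be the main obstacle. The goal is to show, over $\RCA$, that ``every computable instance of $\DBpp$ has a computable solution'' implies $\I\SI02$. Given a $\SI02$ formula witnessing a failure of $\I\SI02$ (equivalently, a bounded $\SI02$ set that is not finite), I would build a computable poset together with a computable realizer of the poset minus finitely many points. The realizer is arranged so that any computable realizer of the full poset of the required size decodes the bounded $\SI02$ set, which is impossible. The coding must respect the constraint that inserting each point above or below the incomparable elements forces a decision about an infinitary condition; standard-example-like gadgets such as $F_n$ would serve as the forcing device.

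Once the equivalence is proved, take a model $M$ of $\B\SI02+\neg\I\SI02$ and its computable sets $\mathrm{REC}^M$. This gives a model of $\RCA+\B\SI02$ in which ``$\DBpp$ is computably true'' fails. Every solution available in that model is computable, so $\DBpp$ itself fails there. The converse direction, that $\I\SI02$ makes $\DBpp$ computably true, is exactly the proof above relativized to computable sets.
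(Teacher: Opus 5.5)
Your $\WKL$ argument, by compactness on the tree of finite approximations, is a valid alternative to the paper's route. The paper instead isolates the one non-effective step: inserting $x_0$ into each of the $n-1$ preserved linearizations $\trianglelefteq_i$ between $I=\{x : x\prec x_0\}$ and $F=\{x : x_0\prec x\}$. It calls this $\mathsf{LS}^*$ and gets it from Lemma~\ref{fm14}. Check that your finite construction produces the two extra orders as rearrangements $I,\{x_0\},\text{rest}$ and $\text{rest},\{x_0\},F$ of one and the same $\trianglelefteq_{n-1}$. Your $\I\SI02$ sketch matches the paper once the $\SI02$ datum is fixed as the set of $i<n-1$ for which some element lies between $I$ and $F$ in $\trianglelefteq_i$. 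When none does, $\{y : \exists a\in I\,(y\trianglelefteq_i a)\}=\{y : \forall b\in F\,(y\lhd_i b)\}$ is $\DE01$ (Lemmas~\ref{separatorrca} and~\ref{leastsepel}).

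The genuine gap is the separation from $\B\SI02$. An instance whose realizers ``decode'' a bounded $\SI02$ set that is not finite cannot exist. A decoding reads the set off every realizer, computable or not, so it would prove $\DBpp\to\I\SI02$ over $\RCA$. That contradicts your own first half: $\WKL\vdash\DBpp$ and $\WKL\nvdash\I\SI02$. Concretely, a countable model of $\RCA+\B\SI02+\neg\I\SI02$ is an $\omega$-submodel of a model of $\WKL$ with the same first-order part. There the instance has a solution, so the bounded set would be coded by an element of $M$, i.e.\ finite already in the original model. The $F_n$ gadgets are a uniform coding device of exactly this kind: suited to reversals like those in Section~\ref{sec:chains}, but not here. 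What you need is a diagonalization against all $\DE01$ candidate solutions, which leaves room for the non-$\DE01$ solutions that $\WKL$ provides.

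The paper first reduces to linear orders via the reverse half of Theorem~\ref{sep-dbp}. It normalizes instances $(L_j,I_j,F_j)_{j<n}$ of $\mathsf{LS}$ to share the same $I,F$. It adds one order listing $I$, then the rest, then $F$, intersects all of them, and adds a single point $0$ with $I\prec 0\prec F$. The $n$ preserved linearizations of a $\DBpp$-solution then give separator sets. So only one point is removed; the parallelism comes from the preserved linearizations, not from removing finitely many points.

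The paper then diagonalizes (Theorems~\ref{lowerbound} and~\ref{lowerbound2}), using the right combinatorial form of the failure of induction. Under $\neg\B\SI02$ there is $f\colon M\to k$ with finite fibres, so $L_j$ only has to defeat the $\Phi_e$ with $f(e)=j$. When the least such $\Phi_e$ that has not yet acted converges on the current point $x_j$, add a new element next to $x_j$ and move $x_j$. If $\Phi_e(x_j)=0$, the new element goes into $I_j$ immediately above $x_j$; if $\Phi_e(x_j)=1$, it goes into $F_j$ immediately below $x_j$. Each $\Phi_e$ acts at most once, so finiteness of the fibre makes $x_j$ settle, and every total $\Phi_e$ is defeated. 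Under $\B\SI02+\neg\I\SI02$ the fibres are replaced by intervals $[f(j-1),f(j))$. Here $f$ is a $\SI02$-definable increasing cofinal map from a proper cut below some $k$ (Chong--Wong), handled through a $\DE01$ approximation.
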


The last part of Theorem \ref{main2} is obtained by exploiting the \lq\lq computably true\rq\rq\ version of the statement, a tool first used in \cite{simpsonyokoyama} and recently exploited by Le Houérou and Patey \cite{patey2025}.\smallskip


Section \ref{sec:preliminaries} is used to fix the notation and give the basics of order theory and reverse mathematics required.
In Section \ref{sec:boundth} we state the bounding theorems $\DBin$, $\DBcn$ and $\DBp$ and we provide examples to show that these bounds are sharp.
Section \ref{sec:chains} is devoted to the proof of Theorem \ref{main1} (Theorem \ref{reversaldbcn}).
In Section \ref{sec:point} we study $\DBp$ and prove Theorem \ref{main2} (Corollaries \ref{dbpwkl} and \ref{reversemathresult}).

In the appendix we briefly discuss one of the statements introduced in Section \ref{sec:point} within the framework of Weihrauch reducibility.
The main result was proved by Arno Pauly after hearing a talk about this project.

\section{Preliminaries}\label{sec:preliminaries}

%
%
%
%


%
%

For a deeper introduction to the topics of order theory we refer to \cite{fishburn, harzheim}.

A \textit{partially ordered set} (or simply a \textit{poset}) is a set $P$, called the domain, together with a binary relation $\preceq$ which is reflexive, transitive and antisymmetric.
We denote by $\prec$  the irreflexive version of $\preceq$. 
We write $x \mid y$ when $x \npreceq y$ and $y \npreceq x$. 
We say that subsets $Y$ and $Z$ of a poset $(P,\preceq)$ are \textit{incomparable} if for each $y \in Y$ and each $z \in Z$, $y \mid z$.

A subset $X$ of a poset $(P, \preceq)$ is \textit{downward closed} if for every $x \in X$ and every $y \in P$, if $y \preceq x$ then $y \in X$.
We also say that $X$ is an \textit{initial interval} of $(P, \preceq)$.

A \textit{linear order} (or a \textit{chain}) is a poset $(P, \trianglelefteq)$ in which any two distinct elements are comparable.
We use the symbol $\preceq$ to denote a generic poset and we reserve the symbol $\trianglelefteq$ if we want to highlight that it is a linear order.
We say that a poset $(P,\preceq_1)$ extends a poset $(P,\preceq_2)$ if ${\preceq_2} \subseteq {\preceq_1}$.
An extension of a poset to a linear order is called a \textit{linearization}.

\begin{theorem}\label{szpilrajn}
Every poset can be linearized.
\end{theorem}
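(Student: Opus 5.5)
The plan is to prove Szpilrajn's theorem in two steps: first a one-step extension lemma, then a passage to the limit. For the limit I would use Zorn's lemma in the general setting, and a stage-by-stage construction in the countable setting relevant to reverse mathematics (where this is provable in $\RCA$).

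\emph{Key lemma.} If $(P,\preceq)$ is a poset and $a \mid b$, then the relation
\[
x \preceq' y \iff x \preceq y \ \text{ or } \ (x \preceq a \text{ and } b \preceq y)
\]
is a partial order extending $\preceq$ in which $a \prec' b$. Reflexivity is immediate. For transitivity, suppose $x \preceq' y \preceq' z$ and split into four cases according to which disjunct holds for each step. The only case needing care is when both steps use the new disjunct. Then $b \preceq y \preceq a$, so $b \preceq a$, contradicting $a \mid b$. Antisymmetry is handled the same way: a cycle $x \preceq' y \preceq' x$ that uses the new disjunct forces $b \preceq a$.

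\emph{General case.} Consider the set of partial orders on $P$ extending $\preceq$, ordered by inclusion. The union of a chain of partial orders is again a partial order, so Zorn's lemma gives a maximal element. By the key lemma, a maximal element has no incomparable pair, so it is a linearization.

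\emph{Countable case.} Enumerate all pairs $(a_k,b_k)$ of elements of $P=\{p_0,p_1,\dots\}$ and define $\preceq_0={\preceq}$. Given $\preceq_k$, set $\preceq_{k+1}$ to be $\preceq_k$ modified by the key lemma if $a_k$ and $b_k$ are $\preceq_k$-incomparable, and $\preceq_{k+1}=\preceq_k$ otherwise. The union of the $\preceq_k$ is then linear. An alternative I find cleaner for effectivity is to build the linear order by inserting points one at a time. Place $p_n$ into the finite linear order already built on $\{p_0,\dots,p_{n-1}\}$ at the lowest position that lies above every earlier $p_i$ with $p_i \prec p_n$. Each step is a finite search, so the resulting order is $\Delta^0_1$ in $\preceq$. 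The invariant to maintain is that the finite order linearizes the restriction of $\preceq$ to $\{p_0,\dots,p_n\}$.

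The main obstacle is checking that this insertion point respects the elements above $p_n$. Concretely, no $p_j$ with $p_n \prec p_j$ may already sit at or below that position. To rule this out, note that the chosen position is immediately after some $p_i \prec p_n$. If some $p_j \succ p_n$ lay at or below that $p_i$ in the current order, then $p_i \prec p_n \prec p_j$ would give $p_i \prec p_j$ with $p_j$ placed before $p_i$, contradicting the inductive invariant. Once this is settled, taking the union over $n$ by $\Delta^0_1$ comprehension yields the linearization.
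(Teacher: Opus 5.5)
Your proposal is correct and essentially matches the paper. The paper defers the general case to Szpilrajn's classical Zorn's lemma argument, whose one-step extension is exactly the relation $\preceq_a^b$ used in Theorem \ref{dimensionrca}, and proves the countable version in $\RCA$ (Theorem \ref{uniformszpilrajn}) by the same insertion construction you describe, with your verification that no $\succ$-successor lies below the insertion point being the check the paper leaves implicit (your rule is the paper's rule, read with $\trianglelefteq_i$ in place of the strict $\lhd_i$ printed there).
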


The classical proof can be found in \cite{szpilrajn} and uses Zorn's lemma.
Theorem \ref{szpilrajn} implies the axiom of finite choice (see \cite{horst}) which states that if $(S_\alpha)_{\alpha \in I}$ is a family of nonempty finite sets then the set theoretic product $\Pi_{\alpha \in A} S_\alpha$ is nonempty.
This statement is strictly weaker than full axiom of choice but it is still independent from Zermelo Fraenkel set theory (see \cite{moore}).
In \cite{howardrubin} it is shown that Theorem \ref{szpilrajn} combined with the statement that every total order has a cofinal well order, proves the full axiom of choice.

A number of parameters, such as \textit{height} (the maximum cardinality of a chain) and \textit{width} (the maximum cardinality of an antichain) describe a partial order.
We are mostly interested in dimension.

\begin{definition}\label{dimension}
If $(P, \preceq)$ is a poset, we say that a set $(P, \trianglelefteq_i)_{i \in I}$ of linearizations \textit{realize} $(P,\preceq)$ if $\bigcap_{i \in I} {\trianglelefteq_i} = {\preceq}$.
The \textit{dimension of $(P,\preceq)$} is the least cardinality of a realization and is denoted $\dim(P,\preceq)$ or simply by $\dim(P)$.
\end{definition}

By Theorem \ref{szpilrajn} there exists at least a linearization of any poset.
Moreover, if we can find a realization of a poset then the minimum described in Definition \ref{dimension} exists.
A poset has dimension $1$ if and only if it is a chain.
On the other hand, an antichain has dimension $2$: it suffices to take any linearization $\trianglelefteq$ of the antichain and its reverse $\trianglerighteq$.
A fundamental result proved by Dushnik and Miller (\cite{dushnikmiller}) states that for any cardinal $\kappa >0$ there exists a poset of dimension $\kappa$.

In practice, to show that $\dim(P) \leq \kappa$ it suffices to find a set of $\kappa$ linearizations $\{\trianglelefteq_\alpha : \alpha < \kappa\}$ of $P$ satisfying: for all $x,y \in P$ such that $x \mid y$ there exists $\alpha < \kappa$ with $x \ntrianglelefteq_\alpha y$.
In fact, if $x \preceq y$ then $x \trianglelefteq_\alpha y$ holds for every $\alpha$, and if $y \preceq x$ then $x \trianglelefteq_\alpha y$ never holds.

\bigskip

Now we introduce reverse mathematics, which is a program in the foundations of mathematics to establish the axiomatic strength of theorems of ordinary mathematics within the context of subsystems of second order arithmetic.
In particular all combinatorial objects considered in this setting are countable.
We refer to \cite{simpson} and \cite{dzhafarovmummert} for a wider introduction.

The usual basis system to develop reverse mathematics is $\RCA$, which consists of the first order axioms of ordered semirings, induction restricted to \SI01 formulas and comprehension restricted to \DE01 predicates.
$\RCA$ roughly corresponds to computable mathematics.

An example of a statement provable in $\RCA$ is Theorem \ref{szpilrajn} restricted to countable posets.
Moreover, the proof is uniform and so multiple applications of Theorem \ref{szpilrajn} are available in $\RCA$.

\begin{theorem}[$\RCA$]\label{uniformszpilrajn}
For each sequence of posets $(P_i, \preceq_i)_{i \in \N}$ there exists a sequence of linearizations $(P_i, \trianglelefteq_i)_{i \in \N}$.
\end{theorem}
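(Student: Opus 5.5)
We build all the linearizations simultaneously by a single effective construction, uniform in $i$, so that the whole sequence $(\trianglelefteq_i)_{i\in\N}$ is $\Delta^0_1$ in the given sequence $(P_i,\preceq_i)_{i\in\N}$ and therefore exists by $\Delta^0_1$ comprehension. The construction is the usual one-point-at-a-time extension. Enumerate each $P_i$ in increasing order as $p^i_0,p^i_1,\dots$ (each $P_i$ is a set of naturals, so this is automatic). At stage $s$ we hold a finite linear order $L^i_s$ on $\{p^i_0,\dots,p^i_{s-1}\}$ extending the restriction of $\preceq_i$. To add $a=p^i_s$ we place it immediately after the $L^i_s$-largest element $b$ with $b\preceq_i a$; if there is no such $b$, we place $a$ first.

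We must check that $L^i_{s+1}$ still extends $\preceq_i$. Let $c$ be an old element with $a\preceq_i c$. We need $c$ to come after $a$, that is, strictly after $b$ in $L^i_s$. First, $c\neq b$: otherwise $b\preceq_i a\preceq_i b$ forces $a=b$, which is impossible. If $c$ were $L^i_s$-below $b$, we would have $b\preceq_i a\preceq_i c$, so $b\preceq_i c$ by transitivity. Since $L^i_s$ extends $\preceq_i$, this gives $b$ $L^i_s$-below $c$, a contradiction. In the case where no such $b$ exists, $a$ is placed first, and every old $c\preceq_i a$ is excluded by assumption, so again nothing goes wrong. Hence, by $\Sigma^0_1$ induction on $s$ (the invariant is in fact $\Delta^0_1$ relative to the finite codes), each $L^i_s$ is a linear extension of $\preceq_i$ restricted to the first $s$ points, and $L^i_{s+1}$ end-extends $L^i_s$ as a relation.

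Now define $x\trianglelefteq_i y$ iff $x,y\in P_i$ and $x$ precedes or equals $y$ in $L^i_{s}$, where $s=\max(x,y)+1$. This is a computable condition in the parameters: the finite sequence of stages is obtained by primitive recursion, which is available in $\RCA$. So the set $\{(i,x,y): x\trianglelefteq_i y\}$ exists by $\Delta^0_1$ comprehension. Because the stages are coherent, $\trianglelefteq_i$ is reflexive, antisymmetric, total and transitive. For transitivity, evaluate any three points in a common stage. The relation $\trianglelefteq_i$ contains $\preceq_i$ by the invariant.

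The only delicate point is to formalize the stagewise recursion inside $\RCA$: one must verify that the invariant is preserved using only $\Sigma^0_1$ induction. This works because the invariant is a bounded statement about a finite object that is computably obtained from $s$. There are no genuine combinatorial obstacles, and uniformity in $i$ is immediate since nothing in the construction depends on choices beyond the given data.
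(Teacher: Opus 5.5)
Your proof is correct and is essentially the paper's argument: each new point is inserted immediately above the largest earlier point (in the current linear order) that lies $\preceq_i$-below it, uniformly in $i$, so the whole sequence of linearizations is $\Delta^0_1$ in the given posets. The only difference is bookkeeping. The paper dovetails all $i$ via a pairing function $\langle i,m\rangle$, whereas you run a separate primitive recursion for each $i$ and read off $x \trianglelefteq_i y$ at stage $\max(x,y)+1$ (this is cleanest if stage $s$ simply processes the number $s$ when $s\in P_i$, which avoids enumerating possibly finite $P_i$).
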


\begin{proof}
Let $\langle \cdot, \cdot \rangle \colon \N \times \N \to \N$ be a computable bijection and for each $i \in \N$ fix a listing $(x^i_n)_{n \in \N}$ of the elements of $P_i$.
We define simultaneously for each $i$ a linear order $\trianglelefteq_i$ which extends $\preceq_i$ and is $\preceq_i$ computable.
We proceed by stages.
Suppose we are at stage $s = \langle i,m \rangle$: we want to stipulate the $\lhd_i$ comparabilities of $x^i_m$.
For each $\langle i,n \rangle < s$, if there is $\langle i,n' \rangle < s$ such that $x^i_{n'} \prec_i x^i_m$ and $x^i_n \lhd_i x^i_{n'}$ then we put $x^i_n \lhd_i x^i_m$.
Otherwise we put $x^i_m \lhd_i x^i_n$.

Then for each $i$ each relation $\trianglelefteq_i$ extends the corresponding $\preceq_i$ and for any $x,y \in P_i$ such that $x \ne y$, either $x \lhd_i y$ or $y \lhd_i x$.
Moreover, $\trianglelefteq_i$ is still a poset and it is computable from $\preceq_i$ as required.
\end{proof}

We can also prove that a realization of a poset always exists in $\RCA$ by formalizing an argument of \cite{dushnikmiller}.

\begin{theorem}[$\RCA$]\label{dimensionrca}
For each poset $(P, \preceq)$, there exists a set $\{\trianglelefteq_n : n \in \N\}$ of linearizations of $(P,\preceq)$ which realizes it.
\end{theorem}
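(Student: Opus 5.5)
For each pair $(a,b)$ of elements of $P$ with $a \mid b$, I will build one linearization $\trianglelefteq_{(a,b)}$ of $(P,\preceq)$ that puts $b$ strictly below $a$. The family of all these, together with one fixed linearization to handle the degenerate case where $P$ is a chain, realizes $(P,\preceq)$ by the criterion stated after Definition \ref{dimension}. Indexing by pairs can be recoded as indexing by $n \in \N$ via the pairing function, with the convention that an index not coding an incomparable pair yields the fixed linearization. So it suffices to produce, uniformly and computably in $\preceq$, a sequence of posets whose linearizations do the job, and then apply Theorem \ref{uniformszpilrajn}.

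The construction follows Dushnik and Miller. For an incomparable pair $(a,b)$, let $\preceq_{(a,b)}$ be the transitive closure of ${\preceq} \cup \{(b,a)\}$. Explicitly, $x \preceq_{(a,b)} y$ holds if and only if either $x \preceq y$, or both $x \preceq b$ and $a \preceq y$. This description is $\Delta^0_1$ in $\preceq$ uniformly in $(a,b)$, so the sequence $(P,\preceq_{(a,b)})_{(a,b)}$ exists in $\RCA$ by $\Delta^0_1$ comprehension. For indices that do not code an incomparable pair, set $\preceq_n = {\preceq}$; this case distinction is decidable in $\preceq$.

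I then check that each $\preceq_{(a,b)}$ is a partial order. Reflexivity is immediate. For transitivity, take $x \preceq_{(a,b)} y \preceq_{(a,b)} z$ and split into four cases according to which clause witnesses each step. The only nontrivial case is when both steps use the new clause. That would give $a \preceq y \preceq b$, contradicting $a \mid b$, so this case cannot occur. For antisymmetry, suppose $x \preceq_{(a,b)} y$ and $y \preceq_{(a,b)} x$ with at least one step using the new clause. Chaining the two steps again forces $a \preceq b$, which is the same contradiction. These verifications are routine first-order reasoning and need no induction.

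Next I apply Theorem \ref{uniformszpilrajn} to get a sequence of linearizations $\trianglelefteq_n$ of the $\preceq_n$. Each $\trianglelefteq_n$ extends $\preceq$, so it is a linearization of $(P,\preceq)$. To see that the family realizes $\preceq$, let $x \mid y$. The index coding $(x,y)$ satisfies $y \preceq_{(x,y)} x$, hence $y \lhd x$ in that linearization, so $x \ntrianglelefteq y$ there. Therefore $\bigcap_n {\trianglelefteq_n} = {\preceq}$.

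I do not expect a real obstacle. The only points needing care are that the closure formula is genuinely $\Delta^0_1$ (a single step suffices, with no search), so that the sequence of posets exists, and that the intersection $\bigcap_n \trianglelefteq_n$ is understood as a $\Pi^0_1$ condition rather than as a set that must itself exist. This causes no trouble, because we only need the stated equality of relations pointwise.
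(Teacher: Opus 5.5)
Your proposal is correct and is essentially the paper's proof: for each incomparable pair you add one comparability through the one-step closure ($x \preceq y$, or $x \preceq b$ and $a \preceq y$; this is the paper's $\preceq_b^a$ with the roles of $a,b$ swapped), and then linearize all these posets at once with Theorem \ref{uniformszpilrajn}. Your recoding of the pair-indices into $\N$, using a default poset for indices that do not code an incomparable pair, is only bookkeeping that the paper leaves implicit.
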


\begin{proof}
If $(P, \preceq)$ is a chain, then $\{ \preceq \}$ is a realization.

Assume that $(P, \preceq)$ is not a chain and let $a,b \in P$ be such that $a \mid b$. 
We extend $(P, \preceq)$ to another poset $(P,  \preceq_a^b)$ in the following way: for each $x,y \in P$ we stipulate that $x \preceq_a^b y$ if and only if either $x \preceq y$ or $x \preceq a \wedge b \preceq y$.
Notice that such relation exists in $\RCA$, ${\preceq} \subseteq {\preceq_a^b}$ and $a \preceq_a^b b$.
It is straightforward to check that $(P, \preceq_a^b)$ is a poset.


By Theorem \ref{uniformszpilrajn}, for each $a,b \in P$ such that $a \mid b$, there exists a linearization $\trianglelefteq_a^b$ of $(P, \preceq_a^b)$.
It is clear that if $x \mid y$ then $x \ntrianglelefteq_y^x y$, so that $\{\trianglelefteq_a^b: a,b \in P, a \mid b\}$ realizes $(P, \preceq)$.
\end{proof}

The existence of the dimension of a poset, i.e.\ the existence of a realization of minimal cardinality, appears to require either $\WKL$ (see Remark \ref{WKLdim} below for details) or \PI11-induction (which yields the \SI11 least number principle).
For this reason, when we write $\dim(P,\preceq) \leq n$ in $\RCA$ we mean that there exist $n$ linearizations which realize $P$. 
Similarly, $\dim(P,\preceq) \geq n$ means that $n-1$ linearizations never realize $P$.

We are interested in systems extending $\RCA$.
For instance $\WKL$, one of the well known big five of reverse mathematics, is obtained by adding weak König's lemma (namely the statement asserting that every infinite binary tree has an infinite path).

\begin{theorem}[$\RCA$]\label{chomarsol}
The following are equivalent:
\begin{enumerate}[leftmargin=*,label=(\arabic*)]
    \item $\WKL$;
    \item \label{1} for every $1-1$ functions $f,g : \N \rightarrow \N$ with disjoint ranges there exists a set which separates the ranges;
    \item \label{2} every acyclic relation can be extended to a partial order;
    \item \label{3} for every partial order $(P, \preceq)$ and sets $I, F \subseteq P$ such that $\forall x \in I\, \forall y \in F (y \npreceq x)$ there exists a downward closed set $B \subseteq P$ such that $I \subseteq B$ and $B \cap F = \emptyset$.
\end{enumerate}
\end{theorem}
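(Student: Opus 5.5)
This theorem collects known equivalences, and I would prove it as a cycle of implications over $\RCA$. For (1) $\Rightarrow$ (2) I would use the standard characterization of $\WKL$ as $\SI01$ separation (as in \cite{simpson}). Given $f,g$ with disjoint ranges, consider the tree $T$ of binary strings $\sigma$ such that for every $i,j<|\sigma|$ we have $f(j)=i \Rightarrow \sigma(i)=1$ and $g(j)=i \Rightarrow \sigma(i)=0$. This tree is $\DE01$, and it is infinite because the ranges are disjoint. Any path through $T$ is the characteristic function of a separating set.

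For (2) $\Rightarrow$ (1) I would also follow the classical route. Given an infinite binary tree $T$, one builds two $\SI01$-enumerable disjoint sets, coded via $1-1$ functions; if there are only finitely many elements, pad them with a dummy injective part. A separating set for these then computes a path. This is the standard Simpson argument, so I would cite it rather than redo it.

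For (1) $\Rightarrow$ (4), fix $(P,\preceq)$ and sets $I,F$ with $y\npreceq x$ for all $x\in I$ and $y\in F$. Consider the tree of finite $0/1$ assignments $\sigma$ on the first $|\sigma|$ elements of $P$ that:
\begin{itemize}
\item put elements of $I$ in, and elements of $F$ out;
\item are downward closed among the elements decided so far.
\end{itemize}
The tree is infinite: for each length, take $\sigma$ to be membership in the downward closure of $I$ restricted to the first $|\sigma|$ elements. This set is finite, hence exists, and it is disjoint from $F$ by hypothesis. A path through the tree gives the required $B$.

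For (4) $\Rightarrow$ (2), given $f,g$ I would build a poset on points $a_n,b_n,c_n$ ($n\in\N$). Put $a_n\prec c_m$ whenever $f(n)=m$, and $c_m\prec b_n$ whenever $g(n)=m$; add transitive consequences only when both occur, which is impossible since the ranges are disjoint. Let $I=\{a_n\}$ and $F=\{b_n\}$; no element of $F$ lies below an element of $I$. A downward closed $B$ containing $I$ and missing $F$ then separates: $m\in\operatorname{ran} f \Rightarrow c_m\in B$, and $m\in\operatorname{ran} g\Rightarrow c_m\notin B$. So $\{m: c_m\in B\}$ works.

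The main obstacle is making this poset exist in $\RCA$. The relation ``$a_n\prec c_m$ iff $f(n)=m$'' is decidable, since it checks a single value, so the poset is $\DE01$. Transitivity holds because chains of length three never form.

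For (1) $\Rightarrow$ (3), given an acyclic relation $R$, I would use the tree of finite linear orders on initial segments extending $R$. The step needing care is nonemptiness at each level: an acyclic finite restriction must admit a linearization. However, acyclicity of $R$ restricted to $\{0,\dots,k\}$ does not by itself ensure that paths through outside points respect order, so I instead require extending the transitive closure within the finite segment, which is $\RCA$-computable. A path gives a linear, hence partial, order extending $R$.

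For (3) $\Rightarrow$ (2), I would encode $f,g$ as an acyclic relation $R$ on $\{a,b\}\cup\{c_m\}$: take $a\,R\,c_m$ if $\exists n\,(f(n)=m)$ witnessed by an auxiliary point $d_{n}$ with $a\,R\,d_n\,R\,c_{f(n)}$ and $c_{g(n)}\,R\,e_n\,R\,b$, plus $b\,R\,a$. Any partial order extending $R$ yields $c_{f(n)}$ below... Here the intended separating set would be $\{m: c_m\preceq b\}$ or a similar set. Checking that this choice actually separates is the delicate point, and I would consult \cite{cholakmarconesolomon} for the exact gadget.
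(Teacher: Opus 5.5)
Your plan of implications ((1)$\Leftrightarrow$(2), (1)$\Rightarrow$(4)$\Rightarrow$(2), (1)$\Rightarrow$(3)$\Rightarrow$(2)) is sound, and (1)$\Leftrightarrow$(2) and (1)$\Rightarrow$(4) are fine. The paper itself only cites the literature for all of these equivalences.

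\textbf{The reversal (4)$\Rightarrow$(2) fails as written.} You set $a_n\prec c_m$ when $f(n)=m$ and $c_m\prec b_n$ when $g(n)=m$, and then claim $c_m\in B$ for $m\in\ran(f)$ and $c_m\notin B$ for $m\in\ran(g)$. But downward closure only propagates membership downwards. So $a_n\in B$ says nothing about the larger element $c_m$, and $b_n\notin B$ says nothing about the smaller element $c_m$.

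Concretely, in your poset every $a_n$ is minimal. Hence $B=\{a_n:n\in\N\}$ is downward closed, contains $I$ and misses $F$, yet $\{m:c_m\in B\}=\emptyset$.

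The repair is to reverse the orientation: set $c_m\prec a_n$ when $f(n)=m$ and $b_n\prec c_m$ when $g(n)=m$. There are still no three-element chains and no $b_{n'}\preceq a_n$, since either would require some $m\in\ran(f)\cap\ran(g)$. Now $c_{f(n)}\in B$ because it lies below $a_n\in B$, and $c_{g(n)}\notin B$ because otherwise $b_n\in B$. Equivalently, you can keep your order, swap the roles of $I$ and $F$, and use $\{m:c_m\notin B\}$.

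\textbf{The implication (3)$\Rightarrow$(2) is left unverified.} Your gadget does work once you drop the $\SI01$ clause ``$a\,R\,c_m$ if $\exists n\,(f(n)=m)$''; the auxiliary points are exactly what replace it. So take only $a\,R\,d_n\,R\,c_{f(n)}$, $c_{g(n)}\,R\,e_n\,R\,b$ and $b\,R\,a$.

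\emph{Acyclicity.} Every cycle must pass through some $c_m$, because the only out-edges of $a$ go to $d$'s and the only out-edge of $d_n$ goes to a $c$. Such a cycle would have to enter $c_m$ from some $d_n$ with $f(n)=m$ and leave it towards some $e_{n'}$ with $g(n')=m$. Disjointness of the ranges rules this out, so $R$ is acyclic.

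\emph{Separation.} Let $\preceq$ be a partial order extending $R$. For $m=f(n)$ we have $b\preceq a\preceq d_n\preceq c_m$, so $c_m\preceq b$ would force $c_m=b$. For $m=g(n)$ we have $c_m\preceq e_n\preceq b$. Hence $\{m:c_m\npreceq b\}$, which exists by $\DE01$ comprehension, contains $\ran(f)$ and misses $\ran(g)$.

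\textbf{A minor remark on (1)$\Rightarrow$(3).} Your worry about $R$-paths through points outside the finite segment is unnecessary. The tree only needs nonempty levels, non-extendible nodes do no harm, and any path is a linear order on the whole domain containing $R$. Requiring the finite orders to extend the transitive closure of $R\restriction k$ adds nothing, since a linear order extending $R\restriction k$ already does.
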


The equivalence of $\WKL$ with \ref{1} is a classical result, the equivalence with \ref{2} was proved in \cite{cholakmarconesolomon}, while the equivalence with \ref{3} was proved in \cite{frittaionmarcone}.

The statement \ref{2} in Theorem \ref{chomarsol} self-strengthens to its infinite version.

\begin{corollary}[$\WKL$]\label{uniformchomarsol}
If $(R_i)_{i \in \N}$ are acyclic relations on sets $(P_i)_{i \in \N}$ respectively, then each $R_i$ can be extended to a partial order.
\end{corollary}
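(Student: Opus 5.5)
We reduce the sequence version to a single application of statement \ref{2} of Theorem \ref{chomarsol}, by gluing the relations $R_i$ into one acyclic relation on a disjoint union. Working in $\WKL$, we define the set $P=\{\langle i,x\rangle : i\in\N,\ x\in P_i\}$; it exists by $\DE01$ comprehension since the sequence $(P_i)_{i\in\N}$ is given as a single set. On $P$ we define the relation $R$ by
\[
\langle i,x\rangle \mathrel{R} \langle j,y\rangle \iff i=j \wedge x \mathrel{R_i} y .
\]
This relation is also $\DE01$ in the parameter $(R_i)_{i\in\N}$, so it exists in $\RCA$.

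Next we check that $R$ is acyclic. Suppose there is a finite $R$-cycle $\langle i_0,x_0\rangle R \langle i_1,x_1\rangle R \cdots R \langle i_k,x_k\rangle=\langle i_0,x_0\rangle$. By the definition of $R$, consecutive indices are equal, so all the $i_j$ coincide with $i_0$. The cycle is a finite sequence coded by a number, so this is a bounded check and needs no induction beyond $\SI01$. Then $x_0 R_{i_0} x_1 \cdots R_{i_0} x_k=x_0$ is an $R_{i_0}$-cycle, which contradicts the acyclicity of $R_{i_0}$. Whatever precise formalization of ``acyclic'' is used in \cite{cholakmarconesolomon}, for example no chain $x_0 R x_1 \cdots R x_k$ with $k\ge1$ and $x_k=x_0$, or the version with irreflexivity included, the same projection argument applies.

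We now apply statement \ref{2} of Theorem \ref{chomarsol} to obtain a partial order $\preceq$ on $P$ extending $R$. We set $x\preceq_i y \iff \langle i,x\rangle\preceq\langle i,y\rangle$ for $x,y\in P_i$, which gives the sequence $(\preceq_i)_{i\in\N}$ by $\DE01$ comprehension. Each $\preceq_i$ is the restriction of a partial order to a copy of $P_i$, so it is reflexive, antisymmetric and transitive. It also contains $R_i$, because $x R_i y$ implies $\langle i,x\rangle R\langle i,y\rangle$, hence $\langle i,x\rangle\preceq\langle i,y\rangle$.

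There is no real obstacle here. The only point needing care is that acyclicity of $R$ follows from that of each $R_i$ without any induction beyond $\SI01$, and this is handled by the observation above that a single cycle lives inside one component.
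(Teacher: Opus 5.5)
Your proof is correct and matches the paper's: form the disjoint union of the $P_i$ and $R_i$, observe that it is acyclic, apply the single-relation extension result once, and restrict the resulting partial order to each component. You cite item \ref{2} of Theorem \ref{chomarsol}, the extension of acyclic relations, which is the right item; the paper's proof cites \ref{3}, which appears to be a slip.
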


\begin{proof}
Let $P$ and $R$ be the disjoint unions respectively of $(P_i)_{i \in \N}$ and $(R_i)_{i \in \N}$.
Then $R$ is an acyclic relation on the set $P$ and we can apply \ref{3} of Theorem \ref{chomarsol} to get a partial order $(P, \preceq)$ which extends $R$.
Each restriction of $\preceq$ to $P_i$ is a partial order extending $R_i$.
\end{proof}

Another axiomatic system we are interested in is $\I \SI02$, the strengthening of $\RCA$ with induction for all $\SI02$ formulas.
This system is less popular than the big five, but theorems equivalent to it have been studied for instance in \cite{hirst, gurahirstmummert}.
We recall here a basic fact (see \cite[Exercise II.3.13]{simpson}).

\begin{theorem}[$\RCA$]\label{isigma}
The following are equivalent:
\begin{enumerate}[leftmargin=*,label=(\arabic*)]
    \item $\I \SI02$;
    \item bounded $\SI02$ comprehension: for each $\SI02$ formula $\varphi$ with $X$ not free
    $$\forall z\, \exists X\, \forall x (x \in X \leftrightarrow x < z \wedge \varphi(x)).$$
\end{enumerate}
\end{theorem}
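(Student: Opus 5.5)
We prove the two implications separately, working in $\RCA$ throughout.

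\emph{From (2) to (1).} Assume bounded $\SI02$ comprehension and let $\varphi(x)$ be a $\SI02$ formula with $\varphi(0)$ and $\forall x(\varphi(x)\to\varphi(x+1))$. Suppose, for a contradiction, that $\neg\varphi(z)$ for some $z$. Applying bounded comprehension with bound $z+1$, we obtain a finite set $X$ with $x\in X \leftrightarrow x\le z \wedge \varphi(x)$. The statement $x\in X$ is now $\DE00$ in the parameter $X$. Hence ordinary $\SI01$ induction (indeed $\DE00$ induction, which $\RCA$ provides) applies to the formula $x\le z \to x\in X$. It gives $z\in X$, i.e.\ $\varphi(z)$, a contradiction.

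\emph{From (1) to (2).} Assume $\I\SI02$ and fix $\varphi(x)\equiv\exists u\,\forall v\,\theta(x,u,v)$ with $\theta$ bounded, together with a bound $z$. The plan is to find the largest finite ``code'' of the set. Since $X$ must exist as a set and finite sets are coded by numbers, we consider the $\SI02$ formula
\[
\psi(k)\equiv \exists s\,\bigl(s \text{ codes a subset of } \{0,\dots,z-1\} \text{ with } |s|\ge k \wedge \forall x\in s\,\varphi(x)\bigr).
\]
The bounded quantifier $\forall x\in s$ placed in front of $\exists u\,\forall v$ must be absorbed. This uses $\B\SI02$, which follows from $\I\SI02$: we collect witnesses $u_x$ into a single bound $w$, so that $\forall x\in s\,\exists u<w\,\forall v\,\theta$ is again $\SI02$.

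By $\I\SI02$ there is a maximal $k\le z$ with $\psi(k)$. Indeed, $\psi(0)$ holds, and if no maximum existed, induction on $\psi$ would give $\psi(z+1)$, which is impossible. Take a witness $s$ of size $k$, and let $X$ be the set coded by $s$, which exists by $\DE01$ comprehension. Every element of $X$ satisfies $\varphi$. Conversely, if $x<z$, $\varphi(x)$ and $x\notin X$, then $s\cup\{x\}$ witnesses $\psi(k+1)$, contradicting maximality. So $X$ is the required set.

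The main obstacle is this direction, specifically the step of showing that $\psi$ really is $\SI02$. It depends on deriving $\B\SI02$ (equivalently, bounded $\PI01$ collection) from $\I\SI02$. I would first obtain it by the standard argument: apply $\I\SI02$, or the equivalent $\PI02$ least number principle, to the statement ``for every $x<j$ in $s$ there is a witness below some bound,'' and then check the remaining coding details.
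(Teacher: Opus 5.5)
Your proof is correct. The paper gives no argument of its own and only refers to Simpson's book, Exercise II.3.13; your argument is the standard one for that exercise: a maximal finite set of solutions below $z$ obtained by $\I\SI02$ in one direction, and $\DE00$ induction relative to the set produced by bounded comprehension in the other.

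The detour through $\B\SI02$ can be avoided. Quantify existentially over a code $t$ of witnesses as well, taking
$\psi(k)\equiv\exists s\,\exists t\,[s\subseteq\{0,\dots,z-1\}\wedge |s|\ge k\wedge\forall v\,\forall x\in s\,\theta(x,(t)_x,v)]$.
Then $\psi$ is $\SI02$ outright, and the maximality step only needs you to append a single witness for the new element $x$.
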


We show that for each $n \in \N$ there exists a poset of dimension exactly $n$.

\begin{definition}\label{fn}
Let $n>1$ and let $F_n = \{a_i,b_i: i < n\}$.
Equip $F_n$ with the partial order ${\prec} = \{(a_i, b_j) : i \ne j\}$.

\medskip

\adjustbox{scale=1,center}{
\begin{tikzcd}
	{b_0} & {b_1} & \cdots & {b_{n-2}} & {b_{n-1}} \\
	\\
	{a_0} & {a_1} & \cdots & {a_{n-2}} & {a_{n-1}}
	\arrow[no head, from=3-1, to=1-2]
	\arrow[no head, from=3-1, to=1-4]
	\arrow[no head, from=3-1, to=1-5]
	\arrow[no head, from=3-2, to=1-1]
	\arrow[no head, from=3-2, to=1-4]
	\arrow[no head, from=3-2, to=1-5]
	\arrow[no head, from=3-4, to=1-5]
	\arrow[no head, from=3-4, to=1-2]
	\arrow[no head, from=3-4, to=1-1]
	\arrow[no head, from=3-5, to=1-4]
	\arrow[no head, from=3-5, to=1-2]
	\arrow[no head, from=3-5, to=1-1]
\end{tikzcd}}
\medskip
For simplicity, we will refer to this poset by $F_n$.
\end{definition}

\begin{theorem}[$\RCA$]\label{dimfn}
The poset $F_n$ has dimension $n$.
\end{theorem}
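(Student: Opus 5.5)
In $\RCA$, ``$\dim(F_n)=n$'' means two things: there exist $n$ linearizations realizing $F_n$, and no $n-1$ linearizations realize it. Since $n$ is a number of the model (possibly nonstandard), both parts must be handled uniformly in $n$ using only $\Delta^0_1$ comprehension and $\Sigma^0_1$ induction. Everything involved is finite (coded by a finite set), so this should be unproblematic.

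\emph{Upper bound.} For each $i<n$ I will define a linear order $\trianglelefteq_i$ on $F_n$ as follows:
\[
a_0,\dots,a_{i-1},a_{i+1},\dots,a_{n-1}\ \lhd_i\ b_i\ \lhd_i\ a_i\ \lhd_i\ b_0,\dots,b_{i-1},b_{i+1},\dots,b_{n-1}.
\]
Here the elements in each block are listed in increasing order of index for $i$, and in decreasing order of index for a second, reversed listing if needed. This is a linearization of $\prec$: every pair $a_j\prec b_k$ with $j\ne k$ has $a_j\lhd_i b_k$, and the only inversion $b_i\lhd_i a_i$ involves an incomparable pair.

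Now I check the criterion stated after Definition~\ref{dimension}. Pairs $a_i\mid b_i$ are separated by $\trianglelefteq_i$, since $b_i\lhd_i a_i$. The reverse direction $a_i\lhd_j b_i$ holds for $j\neq i$, which exists when $n>1$. For pairs $a_j\mid a_k$ and $b_j\mid b_k$ with $j<k$, I need both orders to occur. To arrange this, I will order the $a$-block and the $b$-block by increasing index in $\trianglelefteq_0$ and by decreasing index in every other $\trianglelefteq_i$. If $n=2$ the realizers are then just $\trianglelefteq_0$ and $\trianglelefteq_1$ with opposite block orders, and a quick check of $F_2$ confirms this works. If $n\ge 3$, there are at least two indices $i$ besides $0$, so some $i\ne0$ remains available. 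Pairs $a_j,a_k$ are then reversed between $\trianglelefteq_0$ and $\trianglelefteq_i$, unless $a_j$ or $a_k$ is the displaced element, and the same holds for the $b$'s.

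This bookkeeping is fiddly. A cleaner alternative is to use $\trianglelefteq_i$ with increasing blocks for every $i$ and argue as follows: for $j<k$, we have $a_j\lhd_k a_k$ and also $a_k\lhd_j \ldots$. Honestly, the cleanest route is to note that in $\trianglelefteq_k$ the element $a_k$ sits above every $a_j$, and in $\trianglelefteq_j$ the element $a_j$ sits above $a_k$. So it suffices to place $a_i$ after all other $a$'s, which the displayed order already does. Symmetrically, $b_i$ sits below all other $b$'s. With this choice the blocks can be increasing throughout, and all incomparable pairs are separated. The definition is $\Delta^0_1$ uniformly in $i<n$, so the family exists in $\RCA$.

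\emph{Lower bound.} Suppose $\trianglelefteq_0,\dots,\trianglelefteq_{n-2}$ realize $F_n$. For each $i<n$ we have $a_i\mid b_i$, so some $j<n-1$ satisfies $b_i\lhd_j a_i$. The least such $j$ defines a function $f\colon n\to n-1$ by bounded search, which is $\Delta^0_1$. By the finite pigeonhole principle, provable in $\RCA$ by $\Sigma^0_1$ induction on $n$, there are $i\ne k$ with $f(i)=f(k)=j$. Then
\[
a_i\lhd_j b_k\lhd_j a_k\lhd_j b_i\lhd_j a_i,
\]
using $a_i\prec b_k$ and $a_k\prec b_i$. This is a contradiction.

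The only step needing any care is confirming that the pigeonhole principle for coded finite functions is available in $\RCA$ for nonstandard $n$. This is standard, since it follows from $\Sigma^0_1$ induction. I expect no real obstacle beyond that.
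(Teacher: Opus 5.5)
Your proof is correct and is essentially the paper's. The upper bound uses the same linearizations $\trianglelefteq_i$: $a_i$ sits just above $b_i$ and above all other $a$'s, and $b_i$ sits below all other $b$'s. The order inside the blocks is immaterial, so the abandoned increasing/decreasing detour should simply be deleted. The lower bound uses the same cycle $a_i \lhd_j b_k \lhd_j a_k \lhd_j b_i \lhd_j a_i$; the paper only shows that each linearization reverses at most one pair $(a_k,b_k)$, and you additionally spell out the finite pigeonhole step (available in $\RCA$) that it leaves implicit.
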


\begin{proof}
To prove that $\dim(F_n) \ge n$ it suffices to show that for every linearization $\trianglelefteq$ of $F_n$ there exists at most one $k<n$ such that $b_k \trianglelefteq a_k$.
In fact, if $b_k \trianglelefteq a_k$ and $b_{k'} \trianglelefteq a_{k'}$ for $k \ne k'$, then 
$$b_{k'} \trianglelefteq a_{k'} \trianglelefteq b_k \trianglelefteq a_k \trianglelefteq b_{k'},$$
and by antisymmetry of $\trianglelefteq$ we obtain $a_k = a_{k'} = b_k = b_{k'}$, a contradiction.

For the converse it suffices to define a realization consisting of exactly $n$ linearizations.
For each $i < n$ let $\trianglelefteq_i$ be the following linearization of $F_n$:
\begin{multline*}
    a_0 \trianglelefteq_i \ldots \trianglelefteq_i a_{i-1} \trianglelefteq_i a_{i+1} \trianglelefteq_i \ldots \trianglelefteq_i a_{n-1} \trianglelefteq_i b_i \trianglelefteq_i \\
    \trianglelefteq_i a_i \trianglelefteq_i b_{n-1} \trianglelefteq_i \ldots b_{i+1} \trianglelefteq_i b_{i-1} \trianglelefteq_i \ldots \trianglelefteq_i b_0.    
\end{multline*}
Each $\trianglelefteq_i$ exists in $\RCA$ by \SI00-comprehension.
We claim that $\{\trianglelefteq_0, \ldots, \trianglelefteq_{n-1}\}$ realizes $F_n$.
If $x \mid y$ we need to find some $i$ such that $x \ntriangleleft_i y$.
If $x=a_j$ and $y=a_k$ for $j \ne k$ it holds that $a_j \ntriangleleft_j a_k$.
If $x=b_j$ and $y = b_k$ for $j \ne k$ it holds that $b_j \ntriangleleft_k b_k$.
If $x=b_j$ and $y = a_j$ it holds that $b_j \ntriangleleft_k a_j$ for any $k \ne j$.
Finally, if $x=a_j$ and $y = b_j$ it holds that $a_j \ntriangleleft_j b_j$.

We conclude that $\{\trianglelefteq_0, \ldots, \trianglelefteq_{n-1}\}$ realizes $F_n$ and so $\dim(F_n) = n$.
\end{proof}

If $\{\trianglelefteq_0, \ldots, \trianglelefteq_{m-1}\}$ realizes $F_n$ for some $m > n$ then a priori the pair $(b_i,a_i)$ may belong to more than one linearization.
On the other hand, the proof of Theorem \ref{dimfn} suggests that the pair $(b_i,a_i)$ occurs in exactly one linearization of any realization of $F_n$ of size $n$.
Indeed this is the case.

\begin{lemma}[$\RCA$]\label{lemmino}
For each set of linearizations $\{\trianglelefteq_0, \ldots, \trianglelefteq_{n-1}\}$ that realizes $F_n$ and for each $k < n$ there exists exactly one $i < n$ such that $b_k \trianglelefteq_i a_k$.
\end{lemma}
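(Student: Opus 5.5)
We split the statement into existence and uniqueness of the index $i$, and both parts use only facts already isolated in the proof of Theorem \ref{dimfn}.

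\emph{Existence.} Fix $k<n$. Since $a_k \mid b_k$ in $F_n$, the pair $(a_k,b_k)$ does not belong to $\preceq = \bigcap_{i<n} {\trianglelefteq_i}$. Hence there is some $i<n$ with $a_k \ntrianglelefteq_i b_k$. Because $\trianglelefteq_i$ is linear, this gives $b_k \trianglelefteq_i a_k$.

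\emph{Uniqueness.} Define the function $g$ on $\{k<n\}$ by letting $g(k)$ be the least $i<n$ with $b_k \trianglelefteq_i a_k$. The existence part shows $g$ is defined on all of $n$, and $g$ exists by $\DE01$-comprehension since everything is finite. The first half of the proof of Theorem \ref{dimfn} showed that a single linearization $\trianglelefteq$ of $F_n$ can satisfy $b_k \trianglelefteq a_k$ for at most one $k$: two such indices $k\neq k'$ would give the cycle $b_{k'} \trianglelefteq a_{k'} \trianglelefteq b_k \trianglelefteq a_k \trianglelefteq b_{k'}$, using $a_{k'}\prec b_k$ and $a_k \prec b_{k'}$, which contradicts antisymmetry. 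Therefore $g$ is injective from $n$ into $n$, and hence a bijection.

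Now suppose some $k$ also had $b_k \trianglelefteq_j a_k$ with $j \neq g(k)$. Since $g$ is surjective, $j = g(k')$ for some $k'$. Then $k' \neq k$, because $g(k)\neq j$. So $\trianglelefteq_j$ has both $b_k \trianglelefteq_j a_k$ and $b_{k'} \trianglelefteq_j a_{k'}$, contradicting the at-most-one fact. Hence the index is exactly one.

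The only delicate point is formalization in $\RCA$: that an injection from $\{0,\dots,n-1\}$ to itself is surjective, for $n$ possibly nonstandard. This is the finite pigeonhole principle for coded finite functions. It is provable in $\RCA$ by $\SI01$-induction on $n$, in fact by bounded induction, since $g$ is a finite object. The remaining steps are direct.
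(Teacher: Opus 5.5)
Your proof is correct. It uses the same two ingredients as the paper: existence comes from $a_k \mid b_k$, and a single linearization puts $b_k$ below $a_k$ for at most one $k$, as observed in the proof of Theorem~\ref{dimfn}. You organize the counting differently. The paper renames so that $b_{n-1}$ lies below $a_{n-1}$ in both $\trianglelefteq_{n-2}$ and $\trianglelefteq_{n-1}$. These two linearizations then handle no other pair, so $\trianglelefteq_0,\dots,\trianglelefteq_{n-3}$ must handle all $n-1$ pairs $(b_k,a_k)$ with $k<n-1$. This contradicts the lower bound of Theorem~\ref{dimfn} for $F_{n-1}$, which is why the paper treats $n=2$ separately: $F_1$ is not defined. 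You instead show that the least-index map $g$ is an injection of $n$ into itself, hence a bijection, so a second index for some $k$ would already equal $g(k')$ for some $k'\neq k$.

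The paper's route is shorter because it recycles Theorem~\ref{dimfn}, whose lower-bound half contains the same pigeonhole step implicitly. Note that its claim that the restrictions ``realize $F_{n-1}$'' is really only needed, and only justified, for the pairs $(b_k,a_k)$, which is all that lower-bound argument uses. Your route is self-contained, uniform for all $n\ge 2$, and avoids the renaming. It also isolates explicitly the one fact that needs care in $\RCA$, the finite pigeonhole principle for coded maps, which $\SI01$-induction provides.
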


\begin{proof}
If $n = 2$ the result is trivial so we may assume $n > 2$.
For each $k < n$, the existence of $i$ is trivial since $a_k \mid b_k$.

To prove uniqueness suppose, without loss of generality, that $b_{n-1} \trianglelefteq_{n-1} a_{n-1}$ and $b_{n-1} \trianglelefteq_{n-2} a_{n-1}$. 
Then, by the first argument in the proof of Theorem \ref{dimfn}, the set of restrictions of $\{\trianglelefteq_0, \ldots, \trianglelefteq_{n-3}\}$ to $F_{n-1}$ realizes $F_{n-1}$, against Theorem \ref{dimfn} itself.
\end{proof}

$\WKL$ is equivalent to the reduction of the dimension of a poset to its finite subposets.

\begin{theorem}\label{th:dimension}
For each $k \ge 3$, $\RCA$ proves the equivalence between:
\begin{enumerate}[leftmargin=*,label=(\arabic*)]
    \item \label{dim1} $\WKL$;
    \item \label{dim2} for every $n$ and every poset $P$, if $\dim(Q,\preceq) \leq n$ for every finite subposet $Q$ of $P$, then $\dim(P,\preceq) \leq n$;
    \item \label{dim3} for every poset $P$, if $\dim(Q,\preceq) \leq k$ for every finite subposet $Q$ of $P$, then $\dim(P,\preceq) \leq k$.
\end{enumerate}
\end{theorem}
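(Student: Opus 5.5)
We prove \ref{dim1}$\Rightarrow$\ref{dim2}, note that \ref{dim2}$\Rightarrow$\ref{dim3} is immediate, and then prove \ref{dim3}$\Rightarrow$\ref{dim1}.

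\emph{\ref{dim1}$\Rightarrow$\ref{dim2}: a compactness argument.} Fix $n$ and a poset $(P,\preceq)$ with $P=\{p_0,p_1,\dots\}$, and let $P_m=\{p_i:i<m\}$. Each node at level $m$ of a tree codes an $n$-tuple of binary relations on $P_m$. The node is in the tree iff each relation is a linearization of $\preceq$ restricted to $P_m$ and their intersection equals that restriction. This is a $\Delta^0_1$ condition, and it is closed under restriction to smaller $P_{m'}$, so we get a tree $T$. The number of candidate relations at each level is finite and computably bounded, so $T$ is (coded as) a finitely branching tree with a computable bound, and bounded K\"onig's lemma, which is provable in $\WKL$, applies. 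The hypothesis on finite subposets applied to each $P_m$ shows that every level of $T$ is nonempty, so $T$ is infinite. An infinite path then gives $n$ relations on $P$. Each is a linearization, and their intersection is $\preceq$, because every pair of elements is checked at some finite level.

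\emph{\ref{dim3}$\Rightarrow$\ref{dim1}.} We use item~\ref{1} of Theorem~\ref{chomarsol}. Let $f,g$ be one-to-one with disjoint ranges. We build a poset $P$ all of whose finite subposets have dimension $\le k$, and such that any $k$ linearizations realizing $P$ compute a separating set. The construction uses copies of the standard example $F_k$, which has dimension exactly $k$ by Theorem~\ref{dimfn}, together with the rigidity in Lemma~\ref{lemmino}: in any realization of $F_k$ of size $k$, each incomparable pair $(b_j,a_j)$ is reversed in exactly one linearization. We would fix one base copy $F=\{a_i,b_i:i<k\}$ of $F_k$, so any realization by $k$ linearizations is labelled by which $\trianglelefteq_i$ reverses $(b_i,a_i)$. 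For each $m$ we attach gadget points $c_m,d_m$ with $c_m\mid d_m$, initially in no relation to $F$. The intended behaviour is that the incomparable pair $(d_m,c_m)$ can be reversed only in linearization $\trianglelefteq_0$ if $m\in\ran f$, and only in $\trianglelefteq_1$ if $m\in\ran g$. To arrange this, when $f(s)=m$ appears at stage $s$ we add comparabilities of the shape $a_j\prec d_m$ and $c_m\prec b_j$ for all $j\ne0$, which forces $(d_m,c_m)$ to behave like a copy of $(b_0,a_0)$; we do the symmetric thing with index $1$ when $g(s)=m$. Since points are only enumerated as stages progress, $P$ is $\Delta^0_1$ definable, for instance by coding each gadget point together with the stage at which it is named. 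The separating set is then $\{m : d_m \trianglelefteq_0 c_m\}$, which exists by $\Delta^0_1$ comprehension.

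The main obstacle is making this gadget work exactly. Three things must hold at once. First, $P$ must remain a partial order, which means checking transitivity of the added relations. Second, every finite subposet must still have dimension $\le k$; this is where we need room, and it is presumably why $k\ge 3$: some linearization other than $0$ and $1$ must be able to reverse $(d_m,c_m)$ when $m$ lies in neither range. Third, the forcing has to work: for $m\in\ran f$, reversing $(d_m,c_m)$ in any linearization other than $\trianglelefteq_0$ must create a cycle, via the argument in the proof of Theorem~\ref{dimfn}. I would first try exactly the comparabilities described above and verify these three points by explicit case analysis, adjusting the gadget if needed. The finite-dimension check would use explicit $k$ linearizations modelled on those in the proof of Theorem~\ref{dimfn}.
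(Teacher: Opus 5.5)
Your compactness argument for (1)$\Rightarrow$(2) is fine; the paper leaves that direction to the reader. The local combinatorics of your gadget is also right. All relations go from the $a$'s and $c$'s to the $b$'s and $d$'s, so transitivity is automatic. Once $\trianglelefteq_i$ is fixed as the linearization reversing $(b_i,a_i)$, a reversal $d_m\trianglelefteq_i c_m$ with $i\neq 0$ gives the cycle $b_i\trianglelefteq_i a_i\trianglelefteq_i d_m\trianglelefteq_i c_m\trianglelefteq_i b_i$.

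The genuine gap is that this poset does not exist in $\RCA$. $F$ and the points $c_m,d_m$ are present from the start, and only the comparabilities are added later. So $a_1\prec d_m$ holds if and only if $m\in\ran f$, since index $1$ is excluded in the $g$-gadget. Thus $\prec$ is only $\Sigma^0_1$ in $f,g$. If it were a set, then $\ran f=\{m:a_1\prec d_m\}$ would exist and would already separate the ranges. For computable $f,g$ with computably inseparable ranges, no such $P$ exists among the computable sets.

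Your proposed patch ("coding each gadget point together with the stage") does not repair this. Suppose $c_m,d_m$ are created only at the stage $s$ with $f(s)=m$ or $g(s)=m$, coded say as $\langle m,s\rangle$. Then the poset is computable, but there are no gadget points for $m\notin\ran f\cup\ran g$. Your set $\{m: d_m\trianglelefteq_0 c_m\}$ becomes $\{m:\exists s\,(\dots)\}$, which is just $\ran f$ again and is not $\Delta^0_1$.

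The missing idea is this. The pair from which membership of $m$ is read must exist for every $m$. Everything that depends on $f$ must be carried by auxiliary points indexed by the \emph{argument} of $f$, so that each comparability is decided by computing a single value $f(r)$.

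The paper does this with the pair $x^m,y^m$ and points $c^r_j,d^r_j$ ($j<k-1$) placed on level $f(r)$. Here $x^m\prec d^r_j$ and $c^r_j\prec y^m$ hold exactly when $f(r)=m$. So level $m$ is a copy of $F_k$ with $x^m,y^m$ in the roles of $a_0,b_0$, with the roles swapped for $g$. Distinct levels are stacked linearly, which makes the bound on finite subposets immediate.

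With no global base copy there is nothing to label the linearizations by, so the paper counts instead. By Lemma \ref{lemmino}, $|\{i<k:y^m\trianglelefteq_i x^m\}|$ equals $1$ on $\ran f$ and $k-1$ on $\ran g$. This is where $k\ge 3$ is used, not where you guessed. A pair incomparable to everything can always be reversed by two linearizations: put it at the bottom of one and at the top of the other, in opposite orders.

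If you want to keep your labelled approach, you would have to hang $(c_m,d_m)$ on fresh points $a^s_j,b^s_j$ created for each $s$, not on $F$ itself. Tie them to $F$ by $a_i\prec b^s_j$ and $a^s_j\prec b_i$ for $i\neq j$; the same cycle argument then forces $(b^s_j,a^s_j)$ to be reversed only in $\trianglelefteq_j$. You would then have to redo the finite-dimension verification.
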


\begin{proof}
The implication from \ref{dim1} to \ref{dim2} is a straightforward application of $\WKL$ and the details are left to the reader, while  \ref{dim3} is an easy consequence of $\ref{dim2}$.

We now deal with the implication from \ref{dim3} to \ref{dim1} and exploit item \ref{1} of Theorem \ref{chomarsol}.
Let $f,g$ be 1-1 functions with disjoint ranges: we want to show that there exists a set $A$ such that $\ran(f) \subseteq A$ and $A \cap \ran(g) = \emptyset$. 
We construct a poset $(P^k,\preceq)$ (recall that $k \geq 3$ is fixed) that will be used for many reversals in Section \ref{sec:chains}.
Let 
$$P^k=\{x^i,y^i:i \in \N\} \cup \{c_j^r,d_j^r: j < k-1, r \in \N\} \cup \{p_j^s,q_j^s: j < k-1, s \in \N\}.$$

To partially order $P^k$ we start with a level function $\ell : P \rightarrow \N$ defined by
$$\ell (z) = \begin{cases}
i & \text{if $z=x^i,y^i$}; \\
f(r) & \text{if $z=c_j^r,d_j^r$}; \\
g(s) & \text{if $z=p_j^s,q_j^s$}.
\end{cases}
$$
Notice that $\RCA$ suffices to prove the existence of $\ell$.
For every $n$ let $P^k_n = \{z \in P^k: \ell (z)=n\}$: we say that $P^k_n$ is a level of $P^k$.
Since $f$ is 1-1, $c^r_j$ and $d^{r'}_h$ belong to the same level if and only if $r = r'$, and the same holds for $p^s_j$ and $q^{s'}_h$.
Furthermore we have $\ell (c^r_j) \neq \ell (p^s_h)$ for all $r$, $s$, $j$ and $h$.

We define the strict partial order $\prec$ by setting for each $u, v \in P$:
\begin{itemize}
    \item if $\ell(u) < \ell(v)$ then $u \prec v$,
    \item if $\ell(u) = \ell(v) = m$ then $u \prec v$ if and only if one of the following six alternatives holds:
\begin{alignat*}{2}
    u =x^m,\, v = d^r_j,\, f(r) & =m & u & =p^s_j,\, v=x^m,\, g(s)=m\\
    u =c^r_j,\, v =y^m,\, f(r) & =m & u & =y^m,\, v=q^s_j,\, g(s)=m\\
    u =c^r_j,\, v=d^r_{j'},\, j \ne j',\, f(r) & =m \quad\qquad & u & =p^s_j,\, v=q^s_{j'},\, j \ne j',\, g(s)=m.
\end{alignat*}
\end{itemize}

To check that $(P^k,\preceq)$ is a partial order, notice that asymmetry is trivial, while transitivity is immediate when the elements belong to different levels (because the levels are linearly ordered by $\preceq$ as the natural numbers) and follows easily in the other case because no level has chains of length $3$.

If $m \notin \ran(f) \cup \ran(g)$ then $P^k_m = \{x^m, y^m\}$ is an antichain.
If $f(r)=m$ the level $P^k_m$ is a copy of $F_k$ where $\{x^m,c^r_0,\ldots,c^r_{k-2}\}$ is the set of the $a_i$'s while $\{y^m,d^r_0,\ldots,d^r_{k-2}\}$ is the set of the $b_i$'s (recall the notation of Definition \ref{fn}).
The figure below illustrates level $m$:

\adjustbox{scale=1,center}{
\begin{tikzcd}
	{y^m} & {d^r_0} & \cdots & {d^r_{k-3}} & {d^r_{k-2}} \\
	\\
	{x^m} & {c^r_0} & \cdots & {c^r_{k-3}} & {c^r_{k-2}}
	\arrow[no head, from=3-1, to=1-2]
	\arrow[no head, from=3-1, to=1-4]
	\arrow[no head, from=3-1, to=1-5]
	\arrow[no head, from=3-2, to=1-1]
	\arrow[no head, from=3-2, to=1-4]
	\arrow[no head, from=3-2, to=1-5]
	\arrow[no head, from=3-4, to=1-5]
	\arrow[no head, from=3-4, to=1-2]
	\arrow[no head, from=3-4, to=1-1]
	\arrow[no head, from=3-5, to=1-4]
	\arrow[no head, from=3-5, to=1-2]
	\arrow[no head, from=3-5, to=1-1]
\end{tikzcd}}

Similarly, if $g(s)=m$ the level $P^k_m$ is a copy of $F_k$ where $\{y^m,p^s_0,\ldots,p^s_{k-2}\}$ is the set of the $a_i$'s while $\{x^m,q^s_0,\ldots,q^s_{k-2}\}$ is the set of the $b_i$'s.
The figure below illustrates level $m$ in this case:

\adjustbox{scale=1,center}{
\begin{tikzcd}
	{x^m} & {q^r_0} & \cdots & {q^r_{k-3}} & {q^r_{k-2}} \\
	\\
	{y^m} & {p^r_0} & \cdots & {p^r_{k-3}} & {p^r_{k-2}}
	\arrow[no head, from=3-1, to=1-2]
	\arrow[no head, from=3-1, to=1-4]
	\arrow[no head, from=3-1, to=1-5]
	\arrow[no head, from=3-2, to=1-1]
	\arrow[no head, from=3-2, to=1-4]
	\arrow[no head, from=3-2, to=1-5]
	\arrow[no head, from=3-4, to=1-5]
	\arrow[no head, from=3-4, to=1-2]
	\arrow[no head, from=3-4, to=1-1]
	\arrow[no head, from=3-5, to=1-4]
	\arrow[no head, from=3-5, to=1-2]
	\arrow[no head, from=3-5, to=1-1]
\end{tikzcd}}

Notice that in $\RCA$ we can realize each level with $k$ linearizations and the levels are linearly ordered.
If $Q$ is a finite subset of $P^k$ then $Q$ intersects finitely many $P^k_m$ and, using bounded \SI01-comprehension which is available in $\RCA$, we know how to realize each of these levels.
Hence $\dim(Q,\preceq) \leq k$ and by \ref{dim3} we have $\dim(P^k,\preceq) \le k$.
Let $\{\trianglelefteq^*_i : i < k\}$ be a set of linearizations which realizes $(P^k,\preceq)$.
Notice that for each $i < k$, the restriction of $\trianglelefteq^*_i$ to the level $P^k_m$ is a linearization of $(P^k_m, \preceq)$ and the set of these restrictions realizes $(P^k_m, \preceq)$.
We already noticed that $P^k_m$ is either an antichain of two elements or a copy of $F_k$.
By Lemma \ref{lemmino} if $m \in \ran(f)$ then $|\{i < k : y^m \trianglelefteq^*_i x^m\}| =1$ while if $m \in \ran(g)$ then $|\{i < k : y^m \trianglelefteq^*_i x^m\}| =k-1$.
If $m \notin \ran(f) \cup \ran(g)$ we only know that $1 \le |\{i < k : y^m \trianglelefteq^*_i x^m\}| \le k-1$.
If we let 
$$A = \{m : |\{i < k : y^m \trianglelefteq^*_i x^m\}| =1 \}$$
we have $\ran(f) \subseteq A$ and $A \cap \ran(g) = \emptyset$, as desired.
\end{proof}

\begin{remark}\label{WKLdim}
By Theorem \ref{th:dimension}, in $\WKL$ the formula $\dim(P,\preceq) \leq n$ is equivalent to a \PI01 formula. 
Hence the least $m$ such that $\dim(P,\preceq) \leq m$ exists and we can define the dimension of a poset.
\end{remark}

\section{Bounding theorems}\label{sec:boundth}

Several theorems have been proved about the dimension of posets: for example in \cite{fishburn} there are several statements that bound the dimension of a poset in terms of other quantities, such as its width.
We are interested in statements that give an upper bound to the dimension of a poset in terms of the dimension of its subposets, obtained by removing one or more points:
\begin{itemize}    
    \item $\DBp$: for each poset $(P, \preceq)$ and each $x_0 \in P$, 
    $$\dim(P,\preceq) \le \dim(P \setminus \{x_0\},\preceq)+1.$$
    \item $\DBin$: for each poset $(P, \preceq)$ and each set of pairwise incomparable chains $C_i \subseteq P$ for $i < n$
    $$\textstyle\dim(P, \preceq) \le \dim(P \setminus \bigcup_{i < n}C_i, \preceq) +\max\{2,n\}.$$
\end{itemize}

$\DBp$, $\DBio$ and $\DBit$ are proved in \cite{hiraguti}, while for $n \ge 3$ $\DBin$ is a new (to the best of our knowledge) natural extension of the previous results.

When formalizing these statements in $\RCA$ by $\dim (P) \leq \dim(Q) + k$ we mean \lq\lq for every $m$ and every realization of $Q$ with $m$ linearizations there is a realization of $P$ with $m+k$ linearizations\rq\rq.


We show that each $\DBin$ is provable in, and in fact equivalent to, $\WKL$ in Section \ref{sec:chains}.
In Section \ref{sec:point} we deal with $\DBp$, showing how to prove it either in $\WKL$ or using $\I\SI02$.


Starting from the posets $F_n$, it is fairly easy to construct posets that show that the bounds provided by $\DBin$ are sharp.
In the examples we highlight the crucial comparabilities.

\begin{example}
Fix $n \ge 2$ and consider $F_{n+2}$ and the chain $C= \{a_n,b_{n+1}\}$. 
The poset $F_{n+2} \setminus C$ is a copy of $F_{n+1}$ plus the relation $a_n \prec b_n$ (it suffices to rename $a_{n+1}$ as $a_n$).
In the figure we show the poset highlighting the elements and the comparability that we remove.

\smallskip
\adjustbox{scale=1,center}{
\begin{tikzcd}
	{b_0} & {b_1} & \cdots & {b_{n-1}} & {b_n} &  |[draw, dashed]| {b_{n+1}} \\
	\\
	{a_0} & {a_1} & \cdots & {a_{n-1}} &  |[draw, dashed]| {a_{n}} & {a_{n+1}}
	\arrow[no head, from=3-1, to=1-2]
	\arrow[no head, from=3-1, to=1-4]
	\arrow[no head, from=3-1, to=1-5]
	\arrow[no head, from=3-1, to=1-6]
	\arrow[no head, from=3-2, to=1-1]
	\arrow[no head, from=3-2, to=1-4]
	\arrow[no head, from=3-2, to=1-5]
	\arrow[no head, from=3-2, to=1-6]
	\arrow[no head, from=3-4, to=1-1]
	\arrow[no head, from=3-4, to=1-2]
	\arrow[no head, from=3-4, to=1-5]
	\arrow[no head, from=3-4, to=1-6]
	\arrow[no head, from=3-5, to=1-1]
	\arrow[no head, from=3-5, to=1-2]
	\arrow[no head, from=3-5, to=1-4]
	\arrow[dashed, no head, from=3-5, to=1-6]
	\arrow[no head, from=3-6, to=1-5]
	\arrow[no head, from=3-6, to=1-4]
	\arrow[no head, from=3-6, to=1-2]
	\arrow[no head, from=3-6, to=1-1]
\end{tikzcd}}
\smallskip

If $\trianglelefteq_0, \ldots, \trianglelefteq_n$ are the $n+1$ linearizations constructed in the proof of Theorem \ref{dimfn} to realize $F_{n+1}$, it is immediate to see that $\trianglelefteq_0, \ldots, \trianglelefteq_{n-1}$ suffice to realize $F_{n+2} \setminus C$.
Therefore $\dim(F_{n+2} \setminus C) \le n$ while $\dim(F_{n+2} ) = n+2$ by Theorem \ref{dimfn}.
Therefore the bound in $\DBio$ is sharp.    
\end{example}

\begin{example}\label{optdbc2}
Consider again $F_{n+2}$ for $n \ge 2$, and let $C_0 = \{a_{n+1},b_n\}$ and $C_1 = \{a_n,b_{n+1}\}$.
$C_0$ and $C_1$ are incomparable chains, $F_{n+2} \setminus (C_0 \cup C_1)$ is exactly $F_n$, so that we have $\dim(F_{n+2} \setminus (C_0 \cup C_1)) = n$ and the bound in $\DBit$ is sharp.
\end{example}

We now generalize Example \ref{optdbc2} to $\DBin$ for $n > 2$.
We need to adjust the construction, as in $F_n$ there are no more than two incomparable chains of the form $\{a_i,b_j\}$.

\begin{example}
Fix $n \ge 3$.
Consider the poset $F_{n+2}$ and the $n$ incomparable chains $C_i = \{b_i\}$ for $i < n$.
Let $C = \bigcup_{i < n} C_i$: we claim that $\dim(F_{n+2} \setminus C) = 2$.
In the figure we show the poset after we removed the elements.
\[\begin{tikzcd}
	&&& {b_n} & {b_{n+1}} \\
	\\
	{a_0} & \cdots & {a_{n-1}} & {a_n} & {a_{n+1}}
	\arrow[no head, from=3-1, to=1-4]
	\arrow[no head, from=3-1, to=1-5]
	\arrow[no head, from=3-3, to=1-4]
	\arrow[no head, from=3-3, to=1-5]
	\arrow[no head, from=3-4, to=1-5]
	\arrow[no head, from=3-5, to=1-4]
\end{tikzcd}\]
To prove that $\dim(F_{n+2} \setminus C) \le 2$ we explicitly define two linearizations $\trianglelefteq_0$ and $\trianglelefteq_1$:
\begin{gather*}
    a_{n+1} \trianglelefteq_0 a_0 \trianglelefteq_0 \ldots \trianglelefteq_0 a_{n-1} \trianglelefteq_0 b_n \trianglelefteq_0 a_n \trianglelefteq_0 b_{n+1};\\
    a_n \trianglelefteq_1 a_{n-1} \trianglelefteq_1 \ldots \trianglelefteq_1 a_0 \trianglelefteq_1 b_{n+1} \trianglelefteq_1 a_{n+1} \trianglelefteq_1 b_n.
\end{gather*}
It is routine to check that $\trianglelefteq_0$ and $\trianglelefteq_1$ realize $F_{n+2} \setminus C$. 
\end{example}

One may wonder if the requirement in $\DBin$ ($n \ge 2$) that the chains are incomparable is necessary. 
The following example shows that this is the case.
For simplicity, we deal with the case $n = 2$ though the construction can be adapted to any $n$.

\begin{example}\label{f5}
Consider $F_5$ and the chains $C_0 = \{a_0,b_1\}$ and $C_1 = \{a_2,b_3\}$ which are comparable since $a_0 \preceq b_3$ and $a_2 \preceq b_1$.
Then $F_5 \setminus (C_0 \cup C_1)$ is $F_3$ with two new comparabilities between $a$'s and $b$'s highlighted in the figure.

\smallskip
\adjustbox{scale=1,center}{
\begin{tikzcd}
	 |[draw=blue, dashed, text=blue]| {b_{0}} &  |[draw=blue, dashed, text=blue]| {b_{2}} & {b_4} \\
	\\
	 |[draw=blue, dashed, text=blue]| {a_{1}} &  |[draw=blue, dashed, text=blue]| {a_3} & {a_4}
	\arrow[no head, from=3-2, to=1-3]
	\arrow[no head, from=3-3, to=1-2]
	\arrow[no head, from=3-1, to=1-2]
	\arrow[no head, from=3-1, to=1-3]
	\arrow[color={rgb,255:red,92;green,92;blue,214}, dashed, no head, from=3-1, to=1-1]
	\arrow[color={rgb,255:red,92;green,92;blue,214}, dashed, no head, from=3-2, to=1-2]
	\arrow[no head, from=3-3, to=1-1]
	\arrow[no head, from=3-2, to=1-1]
\end{tikzcd}}
\smallskip

The linearizations 
\begin{gather*}
a_1 \trianglelefteq_0 a_3 \trianglelefteq_0 b_4 \trianglelefteq_0 a_4 \trianglelefteq_0 b_0 \trianglelefteq_0 b_2; \\  
a_4 \trianglelefteq_1 a_3 \trianglelefteq_1 a_1 \trianglelefteq_1 b_2 \trianglelefteq_1 b_0 \trianglelefteq_1 b_4
\end{gather*}
realize $F_5 \setminus (C_0 \cup C_1)$.
We conclude that the dimension is exactly $2$ and consequently $\dim(F_5) = 5 \nleq 4 = \dim(F_5 \setminus (C_0 \cup C_1)) + 2$.
\end{example}

Despite Example \ref{f5}, we may still state a bound without the requirement of incomparability of the chains. 
\begin{itemize}    
    \item $\DBcn$: for each poset $(P, \preceq)$ and each family of chains $C_i \subseteq P$ for $i < n$, 
    $$\textstyle\dim(P, \preceq) \le \dim(P \setminus \bigcup_{i < n}C_i, \preceq) +2n.$$
\end{itemize}

We show that this bound is sharp.
As with Example \ref{f5}, for simplicity we deal with the case $n = 2$.

\begin{example}\label{f6}
Consider $F_6$, $C_0 = \{a_0,b_1\}$ and $C_1 = \{a_2,b_3\}$. 
Then $F_6 \setminus (C_0 \cup C_1)$ is $F_4$ with two comparabilities between the $a's$ and the $b's$ highlighted in the figure.

\smallskip
\adjustbox{scale=1,center}{
\begin{tikzcd}
	|[draw=blue, dashed, text=blue]| {b_{0}} & |[draw=blue, dashed, text=blue]| {b_{2}} & {b_4} & {b_5} \\
	\\
	|[draw=blue, dashed, text=blue]| {a_{1}} & |[draw=blue, dashed, text=blue]| {1_{3}} & {a_4} & {a_5}
	\arrow[no head, from=3-2, to=1-3]
	\arrow[no head, from=3-3, to=1-2]
	\arrow[no head, from=3-1, to=1-2]
	\arrow[no head, from=3-1, to=1-3]
	\arrow[color={rgb,255:red,92;green,92;blue,214}, dashed, no head, from=3-1, to=1-1]
	\arrow[color={rgb,255:red,92;green,92;blue,214}, dashed, no head, from=3-2, to=1-2]
	\arrow[no head, from=3-3, to=1-1]
	\arrow[no head, from=3-2, to=1-1]
	\arrow[no head, from=3-1, to=1-4]
	\arrow[no head, from=3-2, to=1-4]
	\arrow[no head, from=3-4, to=1-3]
	\arrow[no head, from=3-4, to=1-2]
	\arrow[no head, from=3-4, to=1-1]
	\arrow[no head, from=3-3, to=1-4]
\end{tikzcd}}
\smallskip

The linearizations 
\begin{gather*}
a_5 \trianglelefteq_0 a_3 \trianglelefteq_0 a_1 \trianglelefteq_0 b_4 \trianglelefteq_0 a_4 \trianglelefteq_0 b_0 \trianglelefteq_0 b_2 \trianglelefteq_0 b_5; \\
a_4 \trianglelefteq_1 a_1 \trianglelefteq_1 a_3 \trianglelefteq_1 b_5 \trianglelefteq_1 a_5 \trianglelefteq_1 b_2 \trianglelefteq_1 b_0 \trianglelefteq_1 b_4
\end{gather*}
realize $F_6 \setminus (C_0 \cup C_1)$ so that $\dim(F_6) = 6 = \dim(F_6 \setminus (C_1 \cup C_2)) + 4$.
\end{example}

A simple proof of $\forall n\, \DBcn$ can be obtained by applying repeatedly $\DBio$.
However this proof cannot be formalized in $\WKL$, and in Section \ref{sec:chains} we give a direct proof.

\section{$\DBin$ and $\DBcn$}\label{sec:chains}

In this section we deal with $\DBin$ and $\DBcn$.
We show that for each $n$, each of these dimension bounds is equivalent to $\WKL$.
The following notion is the basic tool for proving these statements.

\begin{definition}
Let $(P,\preceq)$ be a poset and let $C^0, C^1 \subseteq P$ be incomparable chains. 
We say that a linearization $\trianglelefteq$ \textit{puts $C^0$ at the bottom and $C^1$ at the top of $P$} if for each $x \in P$, $c_0 \in C^0$ and  $c_1 \in C^1$, if $x \mid c_0$ and $x \mid c_1$ then $c_0 \trianglelefteq x \trianglelefteq c_1$.
\end{definition}

\begin{lemma}[$\WKL$]\label{chains}
Let $(P_i,\preceq_i)_{i \in \N}$ be a sequence of partial orders and let $(C_i^j)_{i \in \N, j < 2}$ be such that for each $i$ $C_i^0$ and $C_i^1$ are incomparable chains in $P_i$. 
There exists a sequence $(P_i, \trianglelefteq_i)_{i \in \N}$ of linearizations of $(P_i,\preceq_i)_{i \in \N}$ such that each $\trianglelefteq_i$ puts $C_i^0$ at the bottom and $C_i^1$ at the top of $P_i$.
\end{lemma}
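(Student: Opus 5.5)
The approach is to extend each $\preceq_i$ to an acyclic relation $R_i$ that forces the desired placements, and then to use Corollary \ref{uniformchomarsol} followed by Theorem \ref{uniformszpilrajn}. For each $i$, I define $R_i$ on $P_i$ as the union of $\preceq_i$ with all pairs $(c_0,x)$ and $(x,c_1)$ such that $c_0\in C_i^0$, $c_1\in C_i^1$, and $x$ is incomparable with both $c_0$ and $c_1$. The sequence $(R_i)_i$ exists by $\Delta^0_1$ comprehension. Once $R_i$ is shown to be acyclic, Corollary \ref{uniformchomarsol} (available in $\WKL$) gives a sequence of partial orders extending the $R_i$. Theorem \ref{uniformszpilrajn} then linearizes these uniformly, and each resulting $\trianglelefteq_i$ extends $\preceq_i$ and puts $C_i^0$ at the bottom and $C_i^1$ at the top.

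The key step is the acyclicity of $R_i$, which is a finite combinatorial verification inside $\RCA$. Suppose there is a minimal cycle. Compressing consecutive $\preceq$-steps by transitivity, the cycle alternates between $\preceq$-segments and new edges. Every new edge either starts in $C^0$ or ends in $C^1$.

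First consider a new edge $x\to c_1$ followed (possibly after $\preceq$) by anything leaving $c_1$. If $c_1\preceq y$ and $y$ later goes to $c_0'$ via a new edge, then $y$ must be incomparable with $c_0'$. So the useful facts are these. If $c_1\preceq z$ for $c_1\in C^1$, then $z$ is not incomparable with every element of $C^1$, since it is comparable to $c_1$. Also, $z$ cannot be below any element of $C^0$, because that would make $c_1\preceq c_0$, contradicting that the chains are incomparable. Symmetrically, if $z\preceq c_0$, then $z$ is not $\succeq$ any element of $C^1$.

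With these facts, I would trace the cycle. After a step into $C^1$ (via a new edge or via $\preceq$), we sit above some $c_1$. A new edge out of a point $z\succeq c_1$ of the form $z\to c_1'$ requires $z\mid c_1'$; but $c_1$ and $c_1'$ are comparable in the chain, so either $c_1'\preceq c_1\preceq z$ or $c_1\preceq c_1'$. The second case is harmless, since the whole step stays above $C^1$. A new edge of the form $c_0\to x$ cannot start from a point above $C^1$. Hence once the cycle enters the region above $C^1$, it stays $\succeq$ some element of $C^1$. The cycle must then return to its start $x$, with $x\mid$ some element of $C^1$ from the original edge $x\to c_1$. Getting $c_1\preceq\dots\preceq x$, possibly through further $C^1$ elements, would contradict this, or would give $x\succeq c_1''$ with $c_1''$ comparable to $c_1$, and so $x$ comparable to $c_1$ in either direction, a contradiction. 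The dual argument handles cycles containing only $C^0$-edges. A cycle containing only $\preceq$-edges is trivial by antisymmetry.

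The main obstacle is organizing this case analysis cleanly, especially the interaction between the two chains when both kinds of new edges occur in the same cycle. My first attempt would be a potential-style argument: show that any path in $R_i$ starting at a point of $C^1$ stays in $\{z: \exists c\in C^1\, c\preceq z\}$, and any path ending in $C^0$ stays in the dual down-set. A cycle using a $C^1$-edge $x\to c_1$ then forces $x$ to be above some element of $C^1$, contradicting $x\mid c_1$ after using linearity of $C^1$. Since everything is uniform in $i$, no induction beyond $\RCA$ is needed.
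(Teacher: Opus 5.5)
Your overall route is the paper's: enlarge $\preceq_i$ to an acyclic $R_i$, then apply Corollary~\ref{uniformchomarsol} and Theorem~\ref{uniformszpilrajn}. The genuine gap is the existence of $R_i$.

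As you define it, $(c_0,x)$ is added only when $x$ is incomparable with $c_0$ \emph{and} with some $c_1\in C_i^1$, and dually for $(x,c_1)$. The clause $\exists c_1\in C_i^1\,(x\mid c_1)$ ranges over a possibly infinite chain. So your $R_i$ is only $\Sigma^0_1$-definable, and ``exists by $\Delta^0_1$ comprehension'' is false. Here is a concrete instance. Let $C^0=\{c\}$ with $c$ incomparable to every other point, let $C^1=\{e_0\prec e_1\prec\cdots\}$, and add pairwise incomparable points $x_n$ with $e_t\prec x_n$ iff the $n$-th machine has not halted within $t$ steps. This is a computable poset with incomparable chains. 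The pair $(c,x_n)$ lies in your relation iff the $n$-th machine halts, so your relation computes $0'$ and need not exist in a model of $\WKL$ (e.g.\ an $\omega$-model of low sets).

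The paper decouples the two conditions: $x R_i^0 y$ iff $x\in C_i^0$ and $x\mid y$, and $x R_i^1 y$ iff $y\in C_i^1$ and $x\mid y$ (the extra clauses $y\notin C_i^0$ and $x\notin C_i^1$ are automatic). This relation is $\Delta^0_0$ in the parameters and contains yours. Your acyclicity reasoning never uses the ``both'' condition, so the repair is immediate. It also yields the separated property: $c_0\trianglelefteq x$ whenever $x\mid c_0$, and $x\trianglelefteq c_1$ whenever $x\mid c_1$. That is what is actually needed later. For example, the proof of Corollary~\ref{hira-d} runs Theorem~\ref{hira-cn} with $C_1=\emptyset$, so the lemma is applied with an empty chain, where your conjunctive version says nothing.

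The acyclicity argument also needs tightening at the end. The invariant in your last paragraph, ``stays in $\{z:\exists c\in C^1\, c\preceq z\}$'', is too weak. Having $c\preceq x$ for some $c\in C^1$ is compatible with $x\mid c_1$ when $c\prec c_1$. For the same reason, ``$x\succeq c_1''$ with $c_1''$ comparable to $c_1$'' yields nothing when $c_1''\prec c_1$.

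What works, and what your third paragraph in fact proves, is that every point reachable from $c_1$ is $\succeq c_1$ itself:
\begin{itemize}
\item $\preceq$-steps preserve this.
\item No $R^0$-step can start at some $z\succeq c_1$, since $z\in C^0$ would contradict the incomparability of the chains.
\item An $R^1$-step $z\to c_1'$ forces $c_1\preceq c_1'$, because $c_1'\preceq c_1\preceq z$ would contradict $z\mid c_1'$.
\end{itemize}
Returning to $x$ then gives $c_1\preceq x$, which is the contradiction. The dual invariant (every point from which $c_0$ is reachable is $\preceq c_0$) disposes of cycles containing an $R^0$-edge.

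This is exactly the paper's $x_1\preceq x_3\preceq\cdots\preceq x_{m-1}\preceq x_0$. Your forward-closure phrasing is a slightly cleaner way to handle mixed cycles, which the paper treats by first showing that $R^1$ can never be followed (possibly after a $\preceq$-step) by $R^0$.
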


\begin{proof}
For each $i \in \N$ define $R_i^0$ and $R_i^1$ over $P_i$ as follows:
\begin{align*}
x R_i^0 y & \text{ if and only if } x \mid y \land x \in C_i^0 \land y \notin C_i^0;\\
x R_i^1 y & \text{ if and only if } x \mid y \land x \notin C_i^1 \land y \in C_i^1.
\end{align*}

We claim that each $R_i = {\preceq_i} \cup R_i^0 \cup R_i^1$ is acyclic.
Assume that 
$$x_0 R_i x_1 R_i \ldots R_i x_{m-1} R_i x_0.$$
Since $\preceq_i$ is transitive, we may assume that it does not occur twice consecutively.
It is immediate that also each of $R_i^0$ or $R_i^1$ cannot occur twice consecutively.
Since the chains are incomparable we cannot have $x_j R_i^1 x_{j+1} R_i^0 x_{j+2}$ and $x_j R_i^1 x_{j+1} {\preceq_i} x_{j+2} R_i^0 x_{j+3}$ as well.

If $R_i^1$ occurs in the cycle, up to renaming its elements, we may assume that $x_0 R_i^1 x_1 \preceq_i x_2 \ldots$ and then $R_i^0$ does not occur. 
Therefore $\preceq_i$ and $R_i^1$ alternate in the cycle and this entails that $m$ is even.
Hence, the cycle has the form
$$x_0 R_i^1 x_1 {\preceq_i} \ldots R_i^1 x_{m-1} {\preceq_i} x_0$$
For each $j < \frac{m}{2}$ we have that $x_{2j} \notin C_i^1$ while $x_{2j+1} \in C_i^1$. 
We claim that for each $j < \frac{m}{2}-1$, $x_{2j+1} \preceq_i x_{2j+3}$. 
Indeed $x_{2j+1} \preceq_i x_{2j+2}$ and $x_{2j+2} \mid x_{2j+3}$. 
Since $x_{2j+1}, x_{2j+3} \in C_i^1$ it must be
$x_{2j+1} \preceq_i x_{2j+3}$.
Thus $x_1 \preceq_i x_{m-1} \preceq_i x_0$, contradicting $x_0 \mid x_1$.

If $R_i^1$ does not occur in the cycle, then $R_i^0$ does and an analogous argument leads to a contradiction, completing the proof of the claim.

By Corollary \ref{uniformchomarsol}, which is the step where we use $\WKL$, and by Theorem \ref{uniformszpilrajn} the relations $(R_i)_{i \in \N}$ can be extended to linear orders $(\trianglelefteq_i)_{i \in \N}$ which have the prescribed properties.
\end{proof}

Notice that we include the case in which one of the chains is empty.

Lemma \ref{chains} can be reversed.
For simplicity, we state the result only in the case of a single partial order.

\begin{theorem}[$\RCA$]
The following are equivalent:
\begin{enumerate}[leftmargin=*,label=(\arabic*)] 
    \item \label{4.1.1} $\WKL$;
    \item \label{4.1.2} if $(P,\preceq)$ is a poset and $C_0, C_1 \subseteq P$ are incomparable chains, there exists a linearization $(P, \trianglelefteq)$ of $(P, \preceq)$ that puts $C_0$ at the bottom and $C_1$ at the top of $P$.
\end{enumerate}
\end{theorem}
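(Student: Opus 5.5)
The forward direction, from \ref{4.1.1} to \ref{4.1.2}, is the special case of Lemma \ref{chains} in which the sequence consists of a single poset. Equivalently, one can take the constant sequence $P_i = P$, $C_i^0 = C_0$, $C_i^1 = C_1$ and keep only $\trianglelefteq_0$. So the only real work is the reversal.

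For the reversal I will derive item \ref{1} of Theorem \ref{chomarsol}. Let $f,g$ be $1$-$1$ functions with disjoint ranges. I will build, in $\RCA$, a poset made of levels ordered like $\N$, as in the proof of Theorem \ref{th:dimension}. Two chains $C_0$ and $C_1$ will run across all levels, and at each level $m$ there will be two fresh points $x^m, y^m$ that are incomparable to each other and to the chain elements of level $m$. The aim is that a linearization putting $C_0$ at the bottom and $C_1$ at the top is forced to order $x^m$ and $y^m$ according to whether $m \in \ran(f)$ or $m \in \ran(g)$. Then the set $A = \{m : x^m \trianglelefteq y^m\}$ exists by $\Delta^0_1$ comprehension and separates the ranges.

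Here is the concrete plan. Put $u \prec v$ whenever $\ell(u) < \ell(v)$, so the chains are chains across levels. At level $m$ the base elements are $x^m, y^m$. If $f(r) = m$, add $c^r \in C_0$ and $d^r \in C_1$ at level $m$ with $c^r \prec y^m$ and $x^m \prec d^r$, with $c^r$ incomparable to $x^m$ and $d^r$ incomparable to $y^m$. The bottom/top requirement then gives $c^r \trianglelefteq x^m$ and $y^m \trianglelefteq d^r$, which yields nothing. I need the reverse pattern, namely $x^m \prec c^r$ and $d^r \prec y^m$ with $c^r \mid y^m$ and $d^r \mid x^m$. Then $c^r \trianglelefteq y^m$ forces $x^m \trianglelefteq c^r \trianglelefteq y^m$, so $x^m \triangleleft y^m$. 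Symmetrically, if $g(s) = m$, add $p^s \in C_0$ and $q^s \in C_1$ with $y^m \prec p^s$, $p^s \mid x^m$, and $q^s \prec x^m$, $q^s \mid y^m$, which forces $y^m \triangleleft x^m$.

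I must check that $C_0$ and $C_1$ are incomparable chains. Within a level this requires $c^r \mid d^r$ and $p^s \mid q^s$, which holds by setting no relation between them. Across levels, however, elements of $C_0$ and $C_1$ are comparable through the level ordering. This is the main obstacle.

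To fix it, I will not order the levels linearly on all elements. Instead $C_0$ is made a chain only through its own elements, and likewise $C_1$, via $\ell$. Since each level contains at most one chain element of each kind (because $f,g$ are injective with disjoint ranges), I order $C_0$ and $C_1$ internally by level and impose no relation between different levels otherwise. I will verify transitivity: a relation $x^m \prec c^r \prec c^{r'}$ would force $x^m \prec c^{r'}$ at another level, and that would make $x^{m}$ comparable to things at other levels. I accept this and take the transitive closure. The condition that must survive is that $x^m$ stays incomparable to $y^m$ and to the same-level chain elements. A short check shows that the closure only adds relations going between different levels, and that $C_0$ and $C_1$ remain mutually incomparable because no path runs from $C_0$ to $C_1$ or back: paths pass through base points only in the patterns $x \prec c$ and $d \prec y$, and $q \prec x$ and $y \prec p$. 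I expect verifying this incomparability, and that the order is $\Delta^0_1$ definable in $\RCA$, to be the delicate step. If it fails, the fallback is to use separate chains per level, which is allowed in the sequence version of Lemma \ref{chains}, but not in \ref{4.1.2}.
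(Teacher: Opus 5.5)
Your forward direction is fine, and the combinatorics of your reversal are sound. In the transitive closure of your generating relation, $C_1$ only sends edges out and $C_0$ only receives them. No base point both receives from $C_1$ and sends into $C_0$, since that would need $m\in\ran(f)\cap\ran(g)$. So you do get a partial order in which $C_0,C_1$ are incomparable chains, $x^m\mid y^m$, and the order of $x^m,y^m$ is forced as you say.

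The gap is the step you flagged and left open: this order cannot be shown to exist in $\RCA$. Order $C_0$ by increasing level; the decreasing orientation has the same problem on the other side. Then the closure contains $x^m\prec v$ for every $v\in C_0$ with $\ell(v)>m$ exactly when $m\in\ran(f)$. Since $f$ is injective, you can computably find for each $m$ some $c^r$ with $f(r)>m$, and then $m\in\ran(f)\iff x^m\prec c^r$. So $\ran(f)$ is computable from $\preceq$. Now take the $\omega$-model of computable sets, with $f,g$ computable and $\ran(f)$ noncomputable. Your poset does not exist there, so item (2) cannot be applied to it. Taking the transitive closure is a $\Sigma^0_1$ definition, not a $\Delta^0_1$ one.

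The missing idea is to index the chains by enumeration stage rather than by level, so that every comparability is decided by a bounded search. The paper takes the following.
\begin{itemize}
\item $C_0=\{a_i\}$ with $a_i\prec a_j$ iff $i<j$.
\item $C_1=\{b_i\}$ with $b_i\prec b_j$ iff $j<i$.
\item Points $x_n$ with $x_n\prec a_i$ iff $\exists j\le i\, f(j)=n$, and $b_i\prec x_n$ iff $\exists j\le i\, g(j)=n$.
\end{itemize}
Transitivity holds because these sets are upward, respectively downward, closed in the chains. The chains are incomparable because no $x_n$ lies in both ranges.

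A single point $z$, incomparable to everything, replaces your $y^m$'s. It is forced to satisfy $a_i\trianglelefteq z\trianglelefteq b_j$ for all $i,j$. Hence $x_n\trianglelefteq z$ when $n\in\ran(f)$ and $z\triangleleft x_n$ when $n\in\ran(g)$, and $\{n: x_n\trianglelefteq z\}$ separates the ranges. You could also keep your pairs $x^m,y^m$ and simply re-index your two chains by stage in this way.
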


\begin{proof}
Lemma \ref{chains} proves that \ref{4.1.1} implies \ref{4.1.2}.

For the converse let $f,g$ be 1-1 functions with disjoint ranges and let $P=\{z\} \cup \{x_n,a_n,b_n : n \in \N\}$.
We define a partial order $\preceq$ on $P$ as follows:
\begin{itemize}
    \item $a_i \preceq a_j$ if and only if $i < j$;
    \item $b_i \preceq b_j$ if and only if $j < i$;
    \item $x_n \preceq a_i$ if and only if $\exists j < i+1\, f(j) = n$;
    \item $b_i \preceq x_n$ if and only if $\exists j < i+1\, g(j) = n$.
\end{itemize}

Let $C_0 = \{a_n : n \in \N\}$ and $C_1 = \{b_n : n \in \N\}$ which are incomparable chains.
By \ref{4.1.2} let $\trianglelefteq$ be a linearization of $\preceq$ that puts $C_0$ at the bottom and $C_1$ at the top of $P$.
The set $\{n : x_n \trianglelefteq z\}$ separates $\ran(f)$ and $\ran(g)$.  
\end{proof}

We now deal with the bound $\DBin$ and show that it is provable in $\WKL$.
As we mentioned in Section \ref{sec:boundth}, we only deal with posets with finite dimension.

\begin{theorem}[$\WKL$]\label{hira-cn}
$\forall n > 1 \, \DBin$.
\end{theorem}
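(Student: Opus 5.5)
The plan is to take a realization $\{\trianglelefteq_k : k < m\}$ of $Q = P \setminus \bigcup_{i<n} C_i$ and build $m + n$ linearizations of $P$ (note $\max\{2,n\} = n$ since $n > 1$). There will be two groups. The first $m$ linearizations $\trianglelefteq'_k$ extend the $\trianglelefteq_k$ to all of $P$ and take care of incomparable pairs inside $Q$. The last $n$ linearizations $M_i$, one for each chain, take care of every incomparable pair that meets $C = \bigcup_{i<n} C_i$. By the remark after Definition \ref{dimension}, it then suffices to check that for every $x \mid y$ in $P$ some linearization in the family puts $y$ strictly before $x$.

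For the first group, for each $k<m$ I consider the relation $S_k = {\preceq} \cup {\trianglelefteq_k}$ on $P$, where $\trianglelefteq_k$ lives on $Q$, and show it is acyclic. Take a cycle and use transitivity of $\preceq$ and of $\trianglelefteq_k$ to merge consecutive steps of the same kind. Any maximal $\preceq$-segment starts and ends in $Q$: each $\trianglelefteq_k$-step has both endpoints in $Q$, and a cycle with no $\trianglelefteq_k$-step is just a $\preceq$-cycle. So such a segment can be replaced by a single $\trianglelefteq_k$-step, because $\trianglelefteq_k$ extends $\preceq$ on $Q$. This leaves a nontrivial $\trianglelefteq_k$-cycle, contradicting antisymmetry. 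Then Corollary \ref{uniformchomarsol} (the use of $\WKL$), followed by Theorem \ref{uniformszpilrajn}, gives uniformly in $k$ linearizations $\trianglelefteq'_k$ of $P$ extending $S_k$. Their restrictions to $Q$ are exactly $\trianglelefteq_k$, so every incomparable pair in $Q$ is reversed in some $\trianglelefteq'_k$.

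For the second group, for $i<n$ I apply Lemma \ref{chains} to the sequence of pairs of incomparable chains $(C_i, C_{i+1 \bmod n})$ in $P$. This is legitimate because $n \ge 2$, so the two chains are distinct and incomparable. The key observation is that the proof of Lemma \ref{chains} gives more than the definition of ``puts at the bottom/top''. The relation $R^0$ forces $c \lhd x$ whenever $c \in C^0$ and $x \mid c$, with no condition involving $C^1$, and symmetrically $R^1$ forces $x \lhd c$ whenever $c \in C^1$ and $x \mid c$. So I will restate the lemma in this stronger form (same proof) and obtain $M_i$ with $C_i$ entirely below everything incomparable to it and $C_{i+1}$ entirely above everything incomparable to it.

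To conclude, let $x \mid y$. If both lie in $Q$, some $\trianglelefteq'_k$ reverses them as required. Otherwise, say $c \in C_i$ and $x \mid c$ (note that $x \notin C_i$ since $C_i$ is a chain). Then $M_i$ gives $c \lhd x$, while $M_{i-1 \bmod n}$, which has $C_i$ at the top, gives $x \lhd c$. These are different linearizations because $n \ge 2$. Hence both orders occur, and the $m+n$ linearizations realize $P$. I expect the main points needing care to be the acyclicity argument for $S_k$, where one must check that cycles passing through $C$ collapse correctly, and verifying in $\RCA$ that the proof of Lemma \ref{chains} really yields the strengthened property. The rest is bookkeeping.
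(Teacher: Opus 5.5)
Your proposal is correct and follows the paper's proof essentially step by step. It uses the same acyclicity argument for ${\preceq}\cup{\trianglelefteq_k}$, followed by Corollary \ref{uniformchomarsol} and Theorem \ref{uniformszpilrajn}, the same application of Lemma \ref{chains} to the cyclic pairs $(C_i, C_{i+1 \bmod n})$, and the identical final case analysis. Your remark that Lemma \ref{chains} is needed in the one-sided form of ``bottom/top'' is well taken: its proof does give that form, through $R^0$ and $R^1$ separately, and the paper's concluding case check silently relies on exactly that reading of the definition.
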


\begin{proof}
Let $(P, \preceq)$ be a poset, fix $n > 1$ and a set of pairwise incomparable chains $C_i \subseteq P$ for $i < n$.
Let $C = \bigcup_{i < n} C_i$.
Our goal is to prove that $\dim(P, \preceq) \le \dim(P \setminus C, \preceq) + n$.

Fix $m$ and a set of linearizations $\{\trianglelefteq_0, \ldots, \trianglelefteq_{m-1}\}$ realizing $P \setminus C$. 
We construct a set of $m+n$ linear orders which realizes $P$.

For each $i < m$ let ${\preceq_i^*} = {\preceq} \cup {\trianglelefteq_i}$.
We claim that each $\preceq_i^*$ is acyclic.
Towards a contradiction, let $x_0, \ldots, x_{\ell-1}$ be such that
$$x_0 \preceq_i^* x_1 \preceq_i^* \ldots \preceq_i^* x_{\ell-1} \preceq_i^* x_0.$$
Since both $\preceq$ and $\trianglelefteq_i$ are transitive relations we may assume that $\ell$ is even
$$x_0 \preceq x_1 \trianglelefteq_i \ldots \preceq x_{\ell-1} \trianglelefteq_i x_0.$$
Since $\trianglelefteq_i$ is a relation over $P \setminus C$, it follows that for each $j < \ell$ $x_j \in P \setminus C$.
By the fact that $\trianglelefteq_i$ extends $\preceq$ on its domain, we may replace each occurrence of $\preceq$ with $\trianglelefteq_i$ obtaining a cycle with respect to $\trianglelefteq_i$, a contradiction. 

By Corollary \ref{uniformchomarsol} and Theorem \ref{uniformszpilrajn} each $\preceq_i^*$ can be extended to a linear order $\trianglelefteq_i^*$.

Next we build $n$ further linearizations to deal with the chains $C_i$ for $i < n$.
We apply Lemma \ref{chains} to the $n$ pairs of chains $C_{[j]_n}, C_{[j+1]_n}$ for $j < n$, where $[j]_n$ is the residue class of $j$ modulo $n$.
We obtain a sequence of linear orders $(\trianglelefteq^*_{m+j})_{j < n}$, each extending $(P, \preceq)$ and such that $\trianglelefteq^*_{m+j}$ puts $C_{[j]_n}$ at the bottom and $C_{[j+1]_n}$ at the top of $P$.

We are left to prove that the set $\{\trianglelefteq^*_0, \ldots, \trianglelefteq^*_{m+n-1}\}$ realizes $(P,\preceq)$.
As we noticed in Section \ref{sec:preliminaries} it suffices to prove that for each $x,y \in P$ such that $x \mid y$, there exists $i < m+n$ such that $x \ntrianglelefteq^*_i y$:
\begin{enumerate}[leftmargin=*,label=(\arabic*)]
    \item if $x \in C_i$ then $\trianglelefteq^*_{m + [i-1]_n}$, which puts $C_i$ at the top of $P$, works;
    \item if $y \in C_i$ then $\trianglelefteq^*_{m + [i]_n}$, which puts $C_i$ at the bottom of $P$, works;
    \item if $x,y \in P \setminus C$ then recall that $\{\trianglelefteq_0, \ldots, \trianglelefteq_{m-1}\}$ realizes $P \setminus C$; hence there exists $i < n$ such that $x \ntrianglelefteq_i y$ and consequently $\trianglelefteq^*_i$, which extends $\trianglelefteq_i$, works.
\end{enumerate}

Therefore $\dim(P, \preceq) \le \dim(P \setminus C, \preceq) + n$.    
\end{proof}

Provability of $\DBio$ in $\WKL$ now follows immediately.

\begin{corollary}[$\WKL$]\label{hira-d}
$\DBio$.
\end{corollary}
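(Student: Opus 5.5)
We need $\DBio$: for a single chain $C$, $\dim(P) \le \dim(P\setminus C)+\max\{2,1\} = \dim(P\setminus C)+2$. The plan is to rerun the proof of Theorem \ref{hira-cn}, with the empty chain playing the role of a second incomparable chain. Fix $m$ and linearizations $\trianglelefteq_0,\ldots,\trianglelefteq_{m-1}$ realizing $P\setminus C$. An alternative would be to apply Theorem \ref{hira-cn} with $n=2$, $C_0=C$, $C_1=\emptyset$; the empty chain is vacuously incomparable with $C$, and that theorem gives exactly the bound $+2$. To avoid any doubt about whether the statement of Theorem \ref{hira-cn} allows empty chains, I will instead spell out the construction directly.

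First, exactly as in the proof of Theorem \ref{hira-cn}, I show that each ${\preceq}\cup{\trianglelefteq_i}$ is acyclic. The reason is the same: any cycle lies in $P\setminus C$, so it can be collapsed into a $\trianglelefteq_i$-cycle. Then Corollary \ref{uniformchomarsol} and Theorem \ref{uniformszpilrajn} give linear orders $\trianglelefteq^*_i$, for $i<m$, extending both $\preceq$ and $\trianglelefteq_i$.

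Next, I apply Lemma \ref{chains}, which explicitly allows one of the chains to be empty, to the two pairs $(C,\emptyset)$ and $(\emptyset,C)$. This gives linearizations $\trianglelefteq^*_m$, putting $C$ at the bottom, and $\trianglelefteq^*_{m+1}$, putting $C$ at the top. Here "putting $C$ at the bottom" means that $c\trianglelefteq^*_m x$ for every $c\in C$ and every $x$ incomparable to $c$, and symmetrically for the top.

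Finally, I check the realization condition: for $x\mid y$ I need some order in which $x\ntrianglelefteq y$.
\begin{itemize}
\item If $x\in C$, then $y\notin C$, since $C$ is a chain. So $y \trianglelefteq^*_m x$, and $x\ntrianglelefteq^*_m y$ because $x\neq y$.
\item If $y\in C$, then symmetrically $\trianglelefteq^*_{m+1}$ works.
\item If $x,y\in P\setminus C$, some $\trianglelefteq_i$ separates them, and so does $\trianglelefteq^*_i$.
\end{itemize}

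No real obstacle is expected. The only point needing care is the degenerate use of an empty chain in Lemma \ref{chains}. In that case the relation $R^1$ is empty, and the acyclicity argument reduces to the $R^0$ case, which the lemma's proof already covers.
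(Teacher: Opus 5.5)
Your construction is the paper's argument unwound. The paper simply applies Theorem \ref{hira-cn} with $n=2$, $C_0=C$, $C_1=\emptyset$. For $n=2$ that proof builds exactly your orders: the extensions $\trianglelefteq^*_i$ ($i<m$), together with the two orders given by Lemma \ref{chains} for the pairs $(C,\emptyset)$ and $(\emptyset,C)$.

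There is one concrete error, in your final case check: the two chain-orders are used the wrong way round. By your own definition, $\trianglelefteq^*_m$ puts $C$ at the bottom. So for $x\in C$ and $y\mid x$ it gives $x\trianglelefteq^*_m y$, the opposite of the $y\trianglelefteq^*_m x$ you claim. Likewise, when $y\in C$ the order $\trianglelefteq^*_{m+1}$ gives $x\trianglelefteq^*_{m+1} y$, so it is not a witness either. The correct witnesses are as follows.
\begin{itemize}
\item For $x\in C$, use $\trianglelefteq^*_{m+1}$: with $C$ on top, $y\lhd^*_{m+1}x$, hence $x\ntrianglelefteq^*_{m+1}y$.
\item For $y\in C$, use $\trianglelefteq^*_m$.
\end{itemize}
This matches the paper's proof of Theorem \ref{hira-cn} (top for $x\in C_i$, bottom for $y\in C_i$). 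Since both orders are in your family, swapping the two indices repairs the argument and the conclusion stands.

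A minor further point: the property you state for ``putting $C$ at the bottom'' is the right one to use. However, it comes from the proof of Lemma \ref{chains}, where the constructed order extends $R^0$, not from the lemma's statement. The paper's formal definition becomes vacuous when one of the two chains is empty.
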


\begin{proof}
Let $(P, \preceq)$ be a poset and $C \subseteq P$ a chain.
Then, applying Theorem \ref{hira-cn} with $n = 2$, $C_0 = C$ and $C_1 = \emptyset$, we have $\dim(P,\preceq) \le \dim(P \setminus C, \preceq)+2$.
\end{proof}

The last bound we are left to prove is $\DBcn$. 
As $\WKL$ has limited induction, we cannot carry out the straightforward inductive proof applying $\DBio$ (which coincides with $\DBc1$) repeatedly.

\begin{theorem}[$\WKL$]\label{comparable chains}
$\forall n \, \DBcn$.
\end{theorem}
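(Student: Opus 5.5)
Fix $n$, a poset $(P,\preceq)$, chains $C_0,\ldots,C_{n-1}\subseteq P$ (not necessarily incomparable), and a realization $\{\trianglelefteq_0,\ldots,\trianglelefteq_{m-1}\}$ of $P\setminus C$, where $C=\bigcup_{i<n}C_i$. I will build all $m+2n$ linearizations at once with a single application of Corollary \ref{uniformchomarsol}, so that no induction on $n$ is needed.

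\textbf{Step 1 (the first $m$ linearizations).} Copy the first part of the proof of Theorem \ref{hira-cn}. For $i<m$ the relation ${\preceq}\cup{\trianglelefteq_i}$ is acyclic, by the same argument as there. So it extends to a linear order $\trianglelefteq^*_i$, using Corollary \ref{uniformchomarsol} and Theorem \ref{uniformszpilrajn}. These handle every incomparable pair with both elements in $P\setminus C$.

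\textbf{Step 2 (two linearizations per chain).} We cannot simply put $C_i$ at the bottom of $P$, because other chains $C_j$ may be comparable to it. Instead I first make the chains disjoint in a $\Delta^0_1$ way. Set $D_i=C_i\setminus\bigcup_{j<i}C_j$; each $D_i$ is still a chain and $\bigcup_i D_i=C$.

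For each $i<n$ apply Lemma \ref{chains} inside the poset $P$ with the pairs $(D_i,\emptyset)$ and $(\emptyset,D_i)$. This gives linearizations $\trianglelefteq^*_{m+2i}$ and $\trianglelefteq^*_{m+2i+1}$ of $P$. The first puts $D_i$ below every element of $P$ incomparable to it; the second puts $D_i$ above every such element. Lemma \ref{chains} applies to sequences of posets, so all $2n$ of these are obtained uniformly in one application of $\WKL$. This is exactly why the direct proof formalizes in $\WKL$ while the iterated use of $\DBio$ does not: the iteration would require $\Sigma^1_1$-style induction on the number of realizations built.

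\textbf{Step 3 (verification).} Let $x\mid y$. If both lie in $P\setminus C$, some $\trianglelefteq^*_i$ with $i<m$ gives $x\ntrianglelefteq^*_i y$.

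If $x\in D_i$, then $\trianglelefteq^*_{m+2i+1}$ puts $x$ on top of every element incomparable to it. Since $y\mid x$, we get $y\triangleleft^*_{m+2i+1}x$.

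If $y\in D_i$, then $\trianglelefteq^*_{m+2i}$ puts $y$ at the bottom, so $y\triangleleft^*_{m+2i}x$.

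These cases cover everything. Hence $\{\trianglelefteq^*_j:j<m+2n\}$ realizes $P$, giving $\dim(P)\le\dim(P\setminus C)+2n$.

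\textbf{Main point to check.} The only delicate step is that Lemma \ref{chains} really allows one chain to be empty and places the other at the extreme relative to all elements incomparable to it, regardless of the other chains. This is the situation noted after Lemma \ref{chains}, so I expect no real obstacle. The rest is bookkeeping to keep every quantification over $i<n$ uniform, so that $\forall n$ is obtained inside $\WKL$.
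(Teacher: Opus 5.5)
Your proposal is correct and is essentially the paper's proof: you extend the $m$ given linearizations as in Theorem \ref{hira-cn}, then apply Lemma \ref{chains} to the $2n$ pairs $(C_i,\emptyset)$ and $(\emptyset,C_i)$. The one-sided ``bottom''/``top'' property you flagged comes from the separate relations $R^0_i$, $R^1_i$ in the proof of that lemma, since the literal definition is vacuous when one chain is empty. Passing to the disjoint chains $D_i$ is harmless but unnecessary, because the verification only needs each element of $C$ to lie in some $C_i$.
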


\begin{proof}
Let $(P, \preceq)$ be a poset.
For $i < n$ let $C_i \subseteq P$ be chains and $C = \bigcup_{i < n} C_i$.
We need to prove that $\dim(P, \preceq) \le \dim(P \setminus C, \preceq) + 2n$.

Fix $m$ and a set of linearizations $\{\trianglelefteq_0, \ldots, \trianglelefteq_{m-1}\}$ which realizes $(P \setminus C, \preceq)$. 
We construct a set of $m+2n$ linear orders which realizes $(P,\preceq)$.

For each $i < m$ we extend $\trianglelefteq_i$ to a linearization $\trianglelefteq^*_i$ of $(P, \preceq)$ as we did in Theorem \ref{hira-cn}.

We then apply Lemma \ref{chains} to the poset $P$ and to the $2n$ pairs of chains $C_i, \emptyset$ and $\emptyset, C_i$ for $i < n$.
We obtain linearizations $\{\trianglelefteq^*_{m+j}\}_{j < 2n}$ such that if $j < 2n$ is even (respectively, odd) then $\trianglelefteq^*_{m+j}$ is the linearization of $P$ that puts $C_{\frac{i}{2}}$ at the bottom (respectively, that puts $C_{\frac{i-1}{2}}$ at the top).

Showing that the set $\{\trianglelefteq^*_0, \ldots, \trianglelefteq^*_{m+2n-1}\}$ realizes $(P,\preceq)$ is similar to the analogous proof in Theorem \ref{hira-cn}.
\end{proof}

Now we deal with the reversals starting with $\DBio$ and $\DBit$.

\begin{theorem}[$\RCA$]\label{reversaldbcn}
The following are equivalent:
\begin{enumerate}[leftmargin=*,label=(\arabic*)]
    \item \label{4.2.1} $\WKL$;
    \item \label{4.2.2n} $\forall n\, \DBin$;
    \item \label{4.2.2} $\DBit$;
    \item \label{4.2.3n} $\forall n\, \DBcn$;
    \item \label{4.2.3} $\DBio$.
\end{enumerate}
\end{theorem}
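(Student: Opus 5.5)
Forward directions are already in hand. Theorem \ref{hira-cn} gives \ref{4.2.1}$\Rightarrow$\ref{4.2.2n}, and Theorem \ref{comparable chains} gives \ref{4.2.1}$\Rightarrow$\ref{4.2.3n}. The implications \ref{4.2.2n}$\Rightarrow$\ref{4.2.2} and \ref{4.2.3n}$\Rightarrow$\ref{4.2.3} are instantiations. It therefore remains to prove \ref{4.2.2}$\Rightarrow$\ref{4.2.1} and \ref{4.2.3}$\Rightarrow$\ref{4.2.1}. I would also note that \ref{4.2.2}$\Rightarrow$\ref{4.2.3} holds in $\RCA$ by taking $C_1=\emptyset$, as in Corollary \ref{hira-d}. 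So it suffices to show that $\DBio$ implies $\WKL$, via item \ref{1} of Theorem \ref{chomarsol}.

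Fix 1-1 functions $f,g$ with disjoint ranges. I would reuse the level construction from the proof of Theorem \ref{th:dimension}, but add a chain to be removed. The idea is that removing a chain should make the poset realizable in $\RCA$ with few linearizations, while the original poset still forces the counting behaviour of Lemma \ref{lemmino} at each level.

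Concretely, take $P^k$ with some fixed small $k$, say $k=3$. In each level that is a copy of $F_k$ (levels in $\ran(f)\cup\ran(g)$), the pair $\{x^m,y^m\}$ plays the role of a pair $a_i,b_i$. I would let $C$ consist of extra points, one per level, placed so that $P\setminus C$ is the disjoint-level poset where every level has a uniform realization computable from $f,g$ with $d$ linearizations. Then $P$ itself has dimension $d+2$, with levels forced to be copies of some $F_{d+2}$.

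The cleanest choice seems to be the following. Make each level of $P$ a copy of $F_{k}$ when $m\in\ran(f)\cup\ran(g)$, and a copy of $F_k$ minus an incomparability otherwise. Let $C$ be the chain obtained by choosing, on each level $m$, a two-element chain $\{a,b\}$ as in the first example of Section \ref{sec:boundth}, consisting of $a_{k-2}$ and $b_{k-1}$ of that level; consecutive levels are comparable, so $C$ is a chain. Then $P\setminus C$ is levelwise $F_{k-1}$ plus one relation, which is realized by $k-2$ linearizations.

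The crucial point, and the main obstacle, is that these realizations must be built uniformly in $\RCA$ without knowing whether $m\in\ran(f)$ or $m\in\ran(g)$. This works because the role of $x^m,y^m$ (as $a$ versus $b$) should be placed among the elements that are removed or are symmetric after removal. So I would arrange $x^m,y^m$ to be $a_{k-1},b_{k-2}$, the pair that becomes comparable after deleting $C$. Then $P\setminus C$ looks the same in both cases up to the undecided orientation of one comparability, which the realization does not need to decide uniformly. If necessary I would add $x^m,y^m$ to $C$ in a way independent of $f,g$.

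Applying $\DBio$ gives $k$ linearizations realizing $P$. Restricting to each level and applying Lemma \ref{lemmino}, the number of $i<k$ with $y^m\trianglelefteq^*_i x^m$ is $1$ for $m\in\ran(f)$ and $k-1$ for $m\in\ran(g)$. The set $A=\{m: |\{i<k: y^m\trianglelefteq^*_i x^m\}|=1\}$ then separates the ranges, exactly as in Theorem \ref{th:dimension}.

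Verifying that the uniform realization of $P\setminus C$ really exists in $\RCA$ with $k-2$ orders, given the $\Sigma^0_1$ uncertainty about each level, is the delicate step. My first attempt would be a careful choice of which $a,b$ are $x^m,y^m$ so that after deleting $C$ the level is isomorphic to a fixed poset in both cases, with $x^m,y^m$ handled symmetrically.
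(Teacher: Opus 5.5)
Your handling of the easy implications, the reduction to $\DBio\Rightarrow\WKL$, and the overall shape of the reversal all match the paper: delete a chain from $P^k$, apply $\DBio$, count orientations with Lemma \ref{lemmino}. But the step you flag as delicate is where the whole proof lives, and your suggestions for it do not work.

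\textbf{The specific choices you propose fail.}
\begin{itemize}
\item With $k=3$, deleting a two-element chain from a copy of $F_3$ still leaves the incomparable pair $x^m\mid y^m$. So $P^3\setminus C$ has dimension $2$, and $\DBio$ gives four linearizations. The counting then no longer separates: you get at most $2$ orders with $y^m\trianglelefteq x^m$ on $f$-levels versus at least $2$ on $g$-levels.
\item Letting $x^m,y^m$ play $a_{k-1},b_{k-2}$ is incoherent. Those two points are already comparable in $F_k$; deleting points never creates comparabilities, and the first example of Section \ref{sec:boundth} only renames indices. The final separation needs $x^m\mid y^m$ to be a matched pair $a_0,b_0$ so that Lemma \ref{lemmino} applies.
\item A linearization must respect every comparability, so there is no ``undecided orientation'' that a realization can leave open.
\end{itemize}

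\textbf{For $k\ge 5$ the plan cannot work at all.} If $x^m,y^m$ stay as the matched pair, each $F_k$-level of $P^k\setminus C$ contains a copy of $F_{k-2}$ through $x^m,y^m$. In any realization with $k-2$ orders, Lemma \ref{lemmino} then forces exactly $1$ order with $y^m\trianglelefteq x^m$ when $m\in\ran(f)$, and exactly $k-3\ge 2$ such orders when $m\in\ran(g)$. A realization of $P^k\setminus C$ built in $\RCA$ would therefore already compute a separating set, so it cannot be produced there.

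\textbf{The missing idea} is that the realization of $P\setminus C$ must use exactly two linearizations. Then at every level, whether $m\in\ran(f)$, $m\in\ran(g)$, or neither, one order puts $x^m$ below $y^m$ and the other puts it above. This can be fixed uniformly: $x^m\trianglelefteq_0 y^m$ and $y^m\trianglelefteq_1 x^m$ for all $m$. The remaining points $c^r_j,d^r_j,p^s_j,q^s_j$ can be placed using their own index, from which their level and whether it comes from $f$ or $g$ are computable. So one takes:
\begin{itemize}
\item $k=4$ and $C=\{c^r_1,d^r_2,p^s_1,q^s_2: r,s\in\N\}$. This is precisely your $\{a_{k-2},b_{k-1}\}$ pattern for $k=4$, with $x^m,y^m$ left as the matched pair and nothing removed from the levels outside $\ran(f)\cup\ran(g)$.
\item Two explicit orders realizing $P^4\setminus C$.
\end{itemize}
Then $\DBio$ gives four linearizations of $P^4$, and $\{m : |\{i<4: y^m\trianglelefteq^*_i x^m\}|=1\}$ separates the ranges as in Theorem \ref{th:dimension}.
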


\begin{proof}
Theorem \ref{hira-cn} proves that \ref{4.2.1} implies \ref{4.2.2n}, while Theorem \ref{comparable chains} shows that \ref{4.2.1} implies \ref{4.2.3n}.
\ref{4.2.2n} implies \ref{4.2.2} and \ref{4.2.3n} implies \ref{4.2.3} are obvious (notice that $\DBio$ and $\DBc 1$ coincide).
The proof of Corollary \ref{hira-d} shows that \ref{4.2.2} implies \ref{4.2.3}.
Therefore we are left to prove that \ref{4.2.3} implies \ref{4.2.1}.

Let $f,g$ be 1-1 functions with disjoint ranges: we want to show that there exists a set $A$ such that $\ran(f) \subseteq A$ and $A \cap \ran(g) = \emptyset$. 
We consider the poset $P^4$, which is the case $k=4$ of the poset defined in the proof of Theorem \ref{th:dimension}.

If $m \notin \ran(f) \cup \ran(g)$ then $P^4_m = \{x^m, y^m\}$ is an antichain.
If $f(r)=m$ or $g(s)=m$, then the level $P^4_m$ is one of the two copies of $F_4$ below:

\adjustbox{scale=1,center}{
\begin{tikzcd}
{y^m} & {d^r_0} & {d^r_1} & {d^r_2} \\
	\\
	{x^m} & {c^r_0} & {c^r_1} & {c^r_2}
	\arrow[no head, from=3-1, to=1-2]
	\arrow[no head, from=3-1, to=1-3]
	\arrow[no head, from=3-1, to=1-4]
	\arrow[no head, from=3-2, to=1-1]
	\arrow[no head, from=3-2, to=1-3]
	\arrow[no head, from=3-2, to=1-4]
	\arrow[no head, from=3-3, to=1-1]
	\arrow[no head, from=3-3, to=1-2]
	\arrow[no head, from=3-3, to=1-4]
	\arrow[no head, from=3-4, to=1-3]
	\arrow[no head, from=3-4, to=1-2]
	\arrow[no head, from=3-4, to=1-1]
\end{tikzcd}}

\adjustbox{scale=1,center}{
\begin{tikzcd}
{x^m} & {q^s_0} & {q^s_1} & {q^s_2} \\
	\\
	{y^m} & {p^s_0} & {p^s_1} & {p^s_2}
	\arrow[no head, from=3-1, to=1-2]
	\arrow[no head, from=3-1, to=1-3]
	\arrow[no head, from=3-1, to=1-4]
	\arrow[no head, from=3-2, to=1-1]
	\arrow[no head, from=3-2, to=1-3]
	\arrow[no head, from=3-2, to=1-4]
	\arrow[no head, from=3-3, to=1-1]
	\arrow[no head, from=3-3, to=1-2]
	\arrow[no head, from=3-3, to=1-4]
	\arrow[no head, from=3-4, to=1-3]
	\arrow[no head, from=3-4, to=1-2]
	\arrow[no head, from=3-4, to=1-1]
\end{tikzcd}}

Albeit we can realize each level with at most $4$ linearizations, in $\RCA$ we cannot define a set of four linearizations of $\preceq$ that realize $\dim(P^4, \preceq)$.
Indeed, to do so we need to know whether $m \in \ran(f)$ (in which case Lemma \ref{lemmino} implies that three of the linearizations need to put $x^m$ below $y^m$) or $m \in \ran(g)$ (in which case three of the linearizations need to put $x^m$ above $y^m$).

Let $C = \{c^r_1,d^r_2,p^s_1,q^s_2: r,s \in \N\}$ and notice that it is a chain.
We now consider the poset $(P^4 \setminus C, \preceq)$.
If $m \in \ran(f)$ or $m \in \ran(g)$ then $P^4_m \setminus C$ has one of the following forms.
\[\begin{tikzcd}
	{y^m} & {d_0^r} & {d_1^r} &&& {x^m} & {q^s_0} & {q_1^s} \\
	\\
	{x^m} & {c_0^r} & {c_2^r} &&& {y^m} & {p_0^s} & {p_2^s}
	\arrow[no head, from=3-1, to=1-2]
	\arrow[no head, from=3-1, to=1-3]
	\arrow[no head, from=3-2, to=1-1]
	\arrow[no head, from=3-2, to=1-3]
	\arrow[no head, from=3-3, to=1-3]
	\arrow[no head, from=3-3, to=1-2]
	\arrow[no head, from=3-3, to=1-1]
	\arrow[no head, from=3-6, to=1-7]
	\arrow[no head, from=3-6, to=1-8]
	\arrow[no head, from=3-7, to=1-6]
	\arrow[no head, from=3-7, to=1-8]
	\arrow[no head, from=3-8, to=1-8]
	\arrow[no head, from=3-8, to=1-7]
	\arrow[no head, from=3-8, to=1-6]
\end{tikzcd}\]

We claim that $\dim(P^4 \setminus C, \preceq) = 2$.
First notice that for each $m$ we have that $x^m,y^m \notin C$ and $x^m \mid y^m$.
Therefore $\dim(P^4 \setminus C,\preceq) \ge 2$.
We now exhibit two linearizations $\trianglelefteq_0$ and $\trianglelefteq_1$ that realize $(P^4 \setminus C, \preceq)$.
We define $\trianglelefteq_0$ by:
\begin{itemize}    
    \item if $u \preceq v$ then $u \trianglelefteq_0 v$;
    \item $x^m \trianglelefteq_0 y^m$;
    \item for each $r$ if $f(r) = m$ then $x^m \trianglelefteq_0 c^r_2 \trianglelefteq_0 d^r_0 \trianglelefteq_0 c^r_0 \trianglelefteq_0 d^r_1 \trianglelefteq_0 y^m$;
    \item for each $s$ if $g(s) = m$ then $p^s_0 \trianglelefteq_0 p^s_2 \trianglelefteq_0 x^m \trianglelefteq_0 y^m \trianglelefteq_0 q^s_1 \trianglelefteq_0 q^s_0$.
\end{itemize}
In a similar fashion, we define $\trianglelefteq_1$:
\begin{itemize}
    \item if $u \preceq v$ then $u \trianglelefteq_1 v$;
    \item $y^m \trianglelefteq_1 x^m$;
    \item for each $r$ if $f(r) = m$ then $c^r_0 \trianglelefteq_1 c^r_2 \trianglelefteq_1 y^m \trianglelefteq_1 x^m \trianglelefteq_1 d^r_1 \trianglelefteq_1 d^r_0$;
    \item for each $s$ if $g(s) = m$ then $y^m \trianglelefteq_1 p^s_2 \trianglelefteq_1 q^s_0 \trianglelefteq_1 p^s_0 \trianglelefteq_1 q^s_1 \trianglelefteq_1 x^m$.
\end{itemize}
To prove that $\trianglelefteq_0$ and $\trianglelefteq_1$ realize the poset, we need to show that if $u \mid v$ then either $u \ntrianglelefteq_0 v$ or $u \ntrianglelefteq_1 v$. 
Since $u \mid v$ implies $\ell(u) = \ell(v)$ there are at most $16$ cases, that are easily checked.

Now, \ref{4.2.3} implies that $\dim(P^4, \preceq) \le 4$ and,  as in the proof of Theorem \ref{th:dimension}, a set separating the ranges of $f$ and $g$ is $\{m : |\{i < 4 : y^m \trianglelefteq^*_i x^m\}| =1 \}$.
\end{proof}

The key point of the construction in Theorem \ref{reversaldbcn} is that $\dim (P \setminus C, {\preceq}) = 2$.
Therefore, when we add back the chain $C$ and apply $\DBi1$, we obtain a set of four linearizations which realizes $P^4$ and we can exploit Lemma \ref{lemmino} to define the separator set $A$.
The same construction can be used to prove that $\DBith$ is equivalent to $\WKL$ too.

\begin{corollary}[$\RCA$]\label{reversaldbc3}
$\WKL$ is equivalent to $\DBi3$.
\end{corollary}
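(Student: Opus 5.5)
The forward direction is immediate: Theorem \ref{hira-cn} (with $n=3$) shows that $\WKL$ proves $\DBith$. For the reversal I will reuse the construction in the proof of Theorem \ref{reversaldbcn}. Given 1-1 functions $f,g$ with disjoint ranges, I take the poset $P^4$ from the proof of Theorem \ref{th:dimension} with $k=4$. The aim is to remove three pairwise incomparable chains so that what remains has dimension $1$. Then $\DBith$ gives $\dim(P^4)\le 1+\max\{2,3\}=4$, and the separating set $A=\{m : |\{i<4 : y^m \trianglelefteq^*_i x^m\}|=1\}$ works exactly as in the proof of Theorem \ref{th:dimension}, by Lemma \ref{lemmino}.

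The chains cannot lie inside a single level, since in $F_4$ each two-element chain $\{a_i,b_j\}$ has its elements in different rows. So I will take chains that cut across all levels at once:
\[C_0=\{c^r_1,d^r_2,p^s_1,q^s_2 : r,s\in\N\},\qquad C_1=\{c^r_2,d^r_0,p^s_2,q^s_0 : r,s\in\N\},\qquad C_2=\{c^r_0,d^r_1,p^s_0,q^s_1 : r,s\in\N\}.\]
Each is a chain: within a level $m=f(r)$ we have $c^r_1\prec d^r_2$, and elements on different levels are comparable. All three sets exist in $\RCA$. After removing them, each level reduces to $\{x^m,y^m\}$, which is an antichain. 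So $P^4\setminus(C_0\cup C_1\cup C_2)$ has dimension $2$, not $1$. That is still good enough, because $\DBith$ then yields $\dim(P^4)\le 2+3=5$.

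The real difficulty is that the $C_i$ must be pairwise \emph{incomparable}, and chains spread over all levels never are. I therefore have to change strategy. The chains should lie inside individual levels, and the bound should come from the fact that after removal the levels become easy.

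My second plan uses $k=5$ and the poset $P^5$. In each level that is a copy of $F_5$ with $m\in\ran(f)$, I remove the three points $d^r_1,d^r_2,d^r_3$ (one $b$ each). Because $d^r_j\mid d^r_{j'}$, any two of these singletons are incomparable. The incomparability requirement needs this to hold globally, though, so I cannot do this on every level.

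My final choice is to encode instead into a variant poset. The levels are placed side by side, pairwise incomparable, instead of stacked, so that union chains across levels are incomparable. The dimension over the union is still controlled, since a disjoint union of components of dimension $\le d$ has dimension $\le\max(d,2)$, and realizations can be built levelwise in $\RCA$.

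Concretely, I will take the disjoint union of levels $Q_m$, where $Q_m$ is a copy of $F_5$ oriented according to $f$ or $g$ as in the proof of Theorem \ref{th:dimension} with $k=5$. I let $C_i$ consist of the $b$-element with index $i+2$ in every nontrivial level. These elements are pairwise incomparable across both levels and indices, but the singleton sets $\{b^m_i\}$ are not chains once they range over several levels. So $C_i$ must be a chain within a single level, and I will instead use chains $C_i=\{b_{i}\}$ only inside one $F_5$. At this point I expect to have to return to the paper's hint: its earlier example removes the chains $\{b_i\}$ for $i<n$ in $F_{n+2}$ to reach dimension $2$.

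So the plan is the $P^5$ stacked construction with $C_i$ taken as stacked chains of $b$'s having fixed index, namely $\{d^r_{i},q^s_{i}\}$. I will check carefully, which I expect to be the main obstacle, whether these chains are pairwise incomparable; in the stacked order they are not. If that fails, I will fall back on the bound $\dim(P^5\setminus C)=2$, conclude $\dim(P^5)\le 5$, and apply Lemma \ref{lemmino} to $F_5$ to separate the ranges.
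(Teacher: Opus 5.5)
There is a genuine gap: you never exhibit three pairwise incomparable chains to which $\DBith$ applies, so the argument never reaches a bound on $\dim(P^k)$. Each candidate you try breaks a hypothesis.
\begin{itemize}
\item The three cross-level chains in $P^4$, and the chains $\{d^r_i,q^s_i : r,s\in\N\}$ in $P^5$, are pairwise comparable. In the stacked poset, any chain meeting two levels is comparable with every other nonempty chain.
\item In the side-by-side variant, a chain lies inside a single component, so it meets only one level.
\end{itemize}
Your final step, ``fall back on $\dim(P^5\setminus C)=2$ and conclude $\dim(P^5)\le 5$'', is therefore unjustified, since $\DBith$ only speaks about pairwise incomparable chains. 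It also cannot be repaired inside $P^5$. If one chain crosses levels, the other two must be empty. Deleting a single chain, which has at most two points per level, from a copy of $F_5$ still leaves a copy of $F_3$. So the best bound $\DBith$ can give there is $6$, not $5$.

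The missing idea is that the incomparability hypothesis holds vacuously for empty chains, which are allowed; this is exactly how Corollary \ref{hira-d} is obtained from Theorem \ref{hira-cn}. Take $P^4$ and let $C_0=\{c^r_1,d^r_2,p^s_1,q^s_2 : r,s\in\N\}$ be the chain from the proof of Theorem \ref{reversaldbcn}, with $C_1=C_2=\emptyset$. That proof already shows $\dim(P^4\setminus C_0)=2$, so $\DBith$ gives a realization $\trianglelefteq_0,\ldots,\trianglelefteq_4$ of $P^4$.

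Lemma \ref{lemmino} no longer applies, because it needs a realization of size exactly $4$. Use a counting argument instead. By the first part of the proof of Theorem \ref{dimfn}, each linearization reverses at most one pair $(a_k,b_k)$ of a copy of $F_4$.
\begin{itemize}
\item If $m=f(r)$, the pairs $(c^r_j,d^r_j)$ for $j<3$ must be reversed in three distinct linearizations, and each of these has $x^m\lhd y^m$. Hence $|\{i<5 : y^m\trianglelefteq_i x^m\}|\le 2$.
\item If $m=g(s)$, the pairs $(p^s_j,q^s_j)$ force $y^m\lhd x^m$ in at least three linearizations.
\end{itemize}
Therefore $\{m : |\{i<5 : y^m\trianglelefteq_i x^m\}|\le 2\}$ exists by $\Delta^0_1$ comprehension and separates $\ran(f)$ from $\ran(g)$. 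This is the paper's argument.
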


\begin{proof}
The forward direction is proved in Theorem \ref{hira-cn}.

For the backward direction, given $f$ and $g$ we consider the same poset $(P^4, {\preceq})$ of the proof of Theorem \ref{reversaldbcn}.
Let $C_0$ be the chain called $C$ in that proof and let $C_1 = C_2 = \emptyset$ (viewed as a chain). 
The poset $(P^4 \setminus (C_0 \cup C_1 \cup C_2), {\preceq})$ coincides with $(P^4 \setminus C, \preceq)$ and has dimension $2$.
By $\DBith$ we get that $\dim(P^4,{\preceq}) \le 5$ and we can fix a set $\{\trianglelefteq_0, \trianglelefteq_1, \trianglelefteq_2, \trianglelefteq_3, \trianglelefteq_4\}$ of linearizations which realizes $(P^4, {\preceq})$.

If $f(r) = m$ for some $r$, then three distinct linearizations have to deal with the incomparabilities $c^r_0 \mid d^r_0$, $c^r_1 \mid d^r_1$ and $c^r_2 \mid d^r_2$ in $P^4_m$.
This means that if $m \in \ran(f)$ then $|\{i < 5 : y^m \trianglelefteq^*_i x^m\} | \le 2$.
Analogously if $m \in \ran(g)$ then $| \{i < 5 : y^m \trianglelefteq^*_i x^m\} | \ge 3$.
Therefore we can define the separator set $A$ as the set of $m \in \N$ such that $| \{i < 5 : y^m \trianglelefteq^*_i x^m\} | \le 2$.  
\end{proof}

The poset of Theorem \ref{th:dimension} and idea of Theorem \ref{reversaldbcn} can be further exploited to obtain a reversal for every $\DBcn$.

\begin{theorem}\label{reversalwbcn}
For each $n$, $\RCA$ proves the equivalence between $\DBcn$ and $\WKL$.
\end{theorem}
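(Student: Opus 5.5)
The forward direction is Theorem \ref{comparable chains}. For the reversal, fix $n\ge 1$ and 1-1 functions $f,g$ with disjoint ranges. The plan is to use the poset $P^k$ from the proof of Theorem \ref{th:dimension} with $k=2n+2$, together with $n$ chains $C_0,\dots,C_{n-1}$ whose removal leaves a poset of dimension $2$ with an explicit realization in $\RCA$. Then $\DBcn$ yields $2+2n=k$ linearizations realizing $P^k$. Since every level $P^k_m$ with $m\in\ran(f)\cup\ran(g)$ is a copy of $F_k$, Lemma \ref{lemmino} applies exactly as in Theorem \ref{th:dimension}. The set $A=\{m : |\{i<k : y^m \trianglelefteq^*_i x^m\}|=1\}$ contains $\ran(f)$ and is disjoint from $\ran(g)$, because on $\ran(g)$ the count is $k-1\neq 1$ when $k\ge 3$.

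For the chains, I follow the pattern of Theorem \ref{reversaldbcn}, where $n=1$ and $k=4$ and a single chain $\{c^r_1,d^r_2,p^s_1,q^s_2\}$ per level reduces $F_4$ to $F_3$ minus one incomparability. For level $m$ with $f(r)=m$, I let $C_t$ contain $c^r_{2t+1}$ and $d^r_{2t+2}$ for $t<n$, and do the same for the $p^s,q^s$ in $g$-levels. Indices run up to $2n=k-2$, so they stay in range. Each $C_t$ is a chain: within a level $c^r_{2t+1}\prec d^r_{2t+2}$, and across levels the order follows $\ell$. The set $C_t$ is $\DE01$-definable from $f,g$, so it exists in $\RCA$. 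After removal, each $F_k$ level keeps $x^m,y^m$, the elements $c^r_0,d^r_0$, and the pairs $c^r_{2t+2},d^r_{2t+1}$ for $t<n$; note the indices are mismatched. The main step is to show this remainder has dimension $2$ uniformly.

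This is the main obstacle. Within a level, the remainder is the standard example on $a$'s $x^m,c_0,c_2,\dots,c_{2n}$ and $b$'s $y^m,d_0,d_1,\dots,d_{2n-1}$. The only $a$–$b$ incomparabilities are $x^m\mid y^m$ and $c_0\mid d_0$, plus the extra ones $c_{2t+2}\mid d_{2t+2}$, but $d_{2t+2}$ is removed. Also $d_{2t+1}$ is comparable to $c_{2t+2}$ but incomparable to $c_{2t+1}$, which is removed. So among the surviving $a$–$b$ pairs, only $x^m\mid y^m$ and $c^r_0\mid d^r_0$ are incomparable, plus the incomparabilities inside the $a$-antichain and inside the $b$-antichain. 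Such a poset has dimension $2$. For one linearization, list the $a$'s in one order, then $d_0$, then $c_0$, then the remaining $b$'s. For the other, reverse both antichains and put $c_0$ and $d_0$ in the other relative position, placing $y^m$ before $x^m$ as in the explicit $\trianglelefteq_0,\trianglelefteq_1$ of Theorem \ref{reversaldbcn}. I would write these two orders level by level, ordering levels by $\ell$ and using separate uniform recipes for $f$-levels, $g$-levels, and two-element antichain levels. Because both recipes are decidable from $f,g$, the orders exist in $\RCA$. The remaining check is a case analysis over incomparable pairs, which always lie in a single level: antichain pairs are handled by the reversal, and the two special $a$–$b$ pairs by the swap.

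Finally, $x^m\mid y^m$ in the remainder gives dimension at least $2$, and $\DBcn$ applied with the realization of size $2$ gives $k=2n+2$ linearizations of $P^k$. Restricting them to each level and applying Lemma \ref{lemmino} then produces the separating set $A$, exactly as at the end of the proof of Theorem \ref{th:dimension}, so $\WKL$ follows via Theorem \ref{chomarsol}.
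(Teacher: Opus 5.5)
Your plan matches the paper's. You use the poset $P^{2n+2}$ and $n$ chains that remove, from every $f$-level, one $c$ and one $d$ with mismatched indices (and likewise $p,q$ on $g$-levels). You then give an explicit two-element realization of the remainder and finish with $\DBcn$ and Lemma~\ref{lemmino}.

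Your chains $\{c^r_{2t+1},d^r_{2t+2},p^s_{2t+1},q^s_{2t+2}\}$ are the mirror image of the paper's $C_i=\{c^r_{2i},d^r_{2i-1},p^s_{2i},q^s_{2i-1}\}$. For $n=1$ they coincide with the chain of Theorem~\ref{reversaldbcn}, so you need no separate case. Your analysis of a single level of the remainder is correct, with one detail to state. In the first linearization, $x^m$ must come first and $c^r_0$ last among the $a$'s, and $d^r_0$ first and $y^m$ last among the $b$'s. Otherwise the second linearization cannot be its reverse on both antichains.

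The step that fails as written is your justification that the two orders exist in $\RCA$, namely that ``separate uniform recipes for $f$-levels, $g$-levels, and two-element antichain levels'' are decidable from $f,g$.

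\begin{itemize}
\item They are not decidable. Whether $m\in\ran(f)$, $m\in\ran(g)$, or neither is only $\Sigma^0_1$/$\Pi^0_1$ information, and deciding it is exactly what a separating set does.
\item Comparisons involving some $c^r_j,d^r_j,p^s_j,q^s_j$ are harmless, because the index $r$ or $s$ reveals the type of the level.
\item The comparison of $x^m$ with $y^m$, however, must be made without knowing the level type. So all three recipes have to agree on it, e.g.\ $x^m$ before $y^m$ in the first order and $y^m$ before $x^m$ in the second, for every $m$.
\end{itemize}

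This rules out taking the $g$-level recipe to be the copy of your $f$-level recipe under $x\leftrightarrow y$, $c\leftrightarrow p$, $d\leftrightarrow q$. With that choice, writing $\trianglelefteq_0$ for your first order, the set $\{m : x^m \trianglelefteq_0 y^m\}$ would contain $\ran(f)$ and miss $\ran(g)$. It would then be a separating set, so $\RCA$ cannot produce such an order.

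Instead, on a $g$-level the first order must put the $b$-element $x^m$ before the $a$-element $y^m$, and hence $p^s_0$ before $q^s_0$. This is exactly the paper's $\trianglelefteq_0$; for $n=1$ it reads $p^s_0,p^s_2,x^m,y^m,q^s_1,q^s_0$.

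Once the $x^m$--$y^m$ comparison is fixed uniformly, every other comparison is decided by $\ell$ and the indices, and the rest of your argument goes through.
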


\begin{proof}
Theorem \ref{comparable chains} shows one implication, so we deal with the converse. 

If $n = 1$ then $\DBc1$ is $\DBio$ and the implication was proved in Theorem \ref{reversaldbcn}.
So fix $n > 1$ and as usual let $f,g$ be 1-1 functions with disjoint ranges and consider the poset $P^{2n+2}$, which is the case $k=2n+2$ of the poset defined in the proof of Theorem \ref{th:dimension}.

If $m \notin \ran(f) \cup \ran(g)$ then $P^{2n+2}_m = \{x^m, y^m\}$ is an antichain.
If $f(r)=m$ or $g(s)=m$, then the level $P^{2n+2}_m$ is one of the two copies of $F_{2n+2}$ below:
\[\begin{tikzcd}
	{y^m} & {d_0^r} & \cdots & {d_{2n}^r} \\
	\\
	{x^m} & {c_0^r} & \cdots & {c_{2n}^r}
	\arrow[no head, from=3-1, to=1-2]
	\arrow[no head, from=3-1, to=1-4]
	\arrow[no head, from=3-2, to=1-1]
	\arrow[no head, from=3-2, to=1-4]
	\arrow[no head, from=3-4, to=1-2]
	\arrow[no head, from=3-4, to=1-1]
\end{tikzcd}\]

\[\begin{tikzcd}
	{x^m} & {q_0^s} & \cdots & {q_{2n}^s} \\
	\\
	{y^m} & {p_0^s} & \cdots & {p_{2n}^s}
	\arrow[no head, from=3-1, to=1-2]
	\arrow[no head, from=3-1, to=1-4]
	\arrow[no head, from=3-2, to=1-1]
	\arrow[no head, from=3-2, to=1-4]
	\arrow[no head, from=3-4, to=1-2]
	\arrow[no head, from=3-4, to=1-1]
\end{tikzcd}\]

For $1 \le i \le n$ consider the chain $C_i = \{c^r_{2i}, d^r_{2i-1}, p^s_{2i}, q^s_{2i-1}: r,s \in \N\}$.
Let $C = \bigcup_{i = 1}^n C_i$.
If $m \in \ran(f)$ or $m \in \ran(g)$ then $P^{2n+2}_m \setminus C$ has one of the following forms.
\[\begin{tikzcd}
	{y^m} & {d_0^r} & \cdots & {d_{2n-2}^r} & {d_{2n}^r} \\
	\\
	{x^m} & {c_0^r} & \cdots & {c_{2n-3}^r} & {c^r_{2n-1}}
	\arrow[no head, from=3-1, to=1-2]
	\arrow[no head, from=3-1, to=1-5]
	\arrow[no head, from=3-2, to=1-1]
	\arrow[no head, from=3-2, to=1-5]
	\arrow[no head, from=3-4, to=1-1]
	\arrow[no head, from=3-4, to=1-2]
	\arrow[no head, from=3-4, to=1-5]
	\arrow[no head, from=3-1, to=1-4]
	\arrow[no head, from=3-2, to=1-4]
	\arrow[no head, from=3-4, to=1-4]
	\arrow[no head, from=3-5, to=1-5]
	\arrow[no head, from=3-5, to=1-4]
	\arrow[no head, from=3-5, to=1-2]
	\arrow[no head, from=3-5, to=1-1]
\end{tikzcd}\]

\[\begin{tikzcd}
	{x^m} & {q_0^s} & \cdots & {q_{2n-2}^s} & {q_{2n}^s} \\
	\\
	{y^m} & {p_0^s} & \cdots & {p_{2n-3}^s} & {p^s_{2n-1}}
	\arrow[no head, from=3-1, to=1-2]
	\arrow[no head, from=3-1, to=1-5]
	\arrow[no head, from=3-2, to=1-1]
	\arrow[no head, from=3-2, to=1-5]
	\arrow[no head, from=3-4, to=1-1]
	\arrow[no head, from=3-4, to=1-2]
	\arrow[no head, from=3-4, to=1-5]
	\arrow[no head, from=3-1, to=1-4]
	\arrow[no head, from=3-2, to=1-4]
	\arrow[no head, from=3-4, to=1-4]
	\arrow[no head, from=3-5, to=1-5]
	\arrow[no head, from=3-5, to=1-4]
	\arrow[no head, from=3-5, to=1-2]
	\arrow[no head, from=3-5, to=1-1]
\end{tikzcd}\]

We claim that $\dim(P^{2n+2} \setminus C, \preceq) = 2$.
First notice that $P^{2n+2} \setminus C$ is not a chain and so $\dim(P^{2n+2}  \setminus C, \preceq) \ge 2$.
Hence it suffices to exhibit two linearizations $\trianglelefteq_0$ and $\trianglelefteq_1$ that realize $(P^{2n+2}  \setminus C, \preceq)$.
We define $\trianglelefteq_0$ in the following way:
\begin{itemize}
    \item if $z_1 \preceq z_2$ then $z_1 \trianglelefteq_0 z_2$;
    \item $x^m \trianglelefteq_0 y^m$;
    \item for each $r$ if $f(r) = m$ then
    $$x^m \trianglelefteq_0 c^r_{2n-1} \trianglelefteq_0 \ldots \trianglelefteq_0 c^r_1 \trianglelefteq_0 d^r_0 \trianglelefteq_0 c^r_0 \trianglelefteq_0 d^r_2 \trianglelefteq_0 \ldots \trianglelefteq_0 d^r_{2n} \trianglelefteq_0 y^m;$$
    \item for each $s$ if $g(s) = m$ then 
    $$p^s_0 \trianglelefteq_0 \ldots \trianglelefteq_0 p^s_{2n-1} \trianglelefteq_0 x^m \trianglelefteq_0 y^m \trianglelefteq_0 q^s_{2n} \trianglelefteq_0 \ldots \trianglelefteq_0 q^s_0.$$
\end{itemize}
In a similar fashion, we define $\trianglelefteq_1$:
\begin{itemize}
    \item if $z_1 \preceq z_2$ then $z_1 \trianglelefteq_1 z_2$;
    \item $y^m \trianglelefteq_1 x^m$;
    \item for each $r$ if $f(r) = m$ then 
    $$c^r_0 \trianglelefteq_1 \ldots \trianglelefteq_1 c^r_{2n-1} \trianglelefteq_1 y^m \trianglelefteq_1 x^m \trianglelefteq_1 d^r_{2n} \trianglelefteq_1 \ldots \trianglelefteq_1 d^r_0;$$
    \item for each $s$ if $g(s) = m$ then 
    $$y^m \trianglelefteq_0 p^s_{2n-1} \trianglelefteq_0 \ldots \trianglelefteq_0 p^s_1 \trianglelefteq_0 q^s_0 \trianglelefteq_0 p^s_0 \trianglelefteq_0 q^s_2 \trianglelefteq_0 \ldots \trianglelefteq_0 q^s_{2n} \trianglelefteq_0 y^m.$$
\end{itemize}
It is easy to verify that $\trianglelefteq_0$ and $\trianglelefteq_1$ realize $(P^{2n+2}  \setminus C, \preceq)$.

By $\DBcn$ we know that $\dim(P^{2n+2}, \preceq) \le 2n + 2$ and the set separating the ranges of $f$ and $g$ is defined as $A = \{m : |\{i < 2n+2 : y^m \trianglelefteq^*_i x^m\}| =1 \}$ as in the proof of Theorem \ref{th:dimension}.
\end{proof}

We still need to show that $\DBin$ for $n \ge 4$ implies $\WKL$. 
The poset $P^k$ of Theorem \ref{th:dimension} and the idea of the proof of Theorem \ref{reversalwbcn} cannot be used because in that poset if a chain intersects two or more levels then it is comparable with every other nonempty chain.
We need a crucial modification.

\begin{theorem}
For each $n$, $\RCA$ proves the equivalence between $\DBin$ and $\WKL$.
\end{theorem}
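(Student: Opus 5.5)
The implication from $\WKL$ to $\DBin$ is already available: for $n>1$ it is Theorem \ref{hira-cn}, and for $n=1$ it is Corollary \ref{hira-d}. The cases $n=1,2,3$ of the reversal are Theorem \ref{reversaldbcn} and Corollary \ref{reversaldbc3}. So I only need to show that $\DBin$ implies $\WKL$ for $n\ge 4$, and I will use item \ref{1} of Theorem \ref{chomarsol}.

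Given $1$-$1$ functions $f,g$ with disjoint ranges, I would build a level poset modelled on $P^{n+2}$. Each level $m\in\ran(f)\cup\ran(g)$ is a copy of $F_{n+2}$, with $x^m,y^m$ in the roles of $a_0,b_0$ in the orientation fixed by whether $m\in\ran(f)$ or $m\in\ran(g)$, exactly as in Theorem \ref{th:dimension}. The other levels are $\{x^m,y^m\}$. Following the last example of Section \ref{sec:boundth}, the points to be removed are the $n$ ``top'' points of each $F_{n+2}$-level other than the one playing $b_0$ and one further $b$-point: in an $f$-level these are the $d^r_j$, and in a $g$-level the $q^s_j$. Each such point gets a column index $t(z)<n$.

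The crucial modification is that the levels are no longer linearly ordered. I would set $u\prec v$ for $\ell(u)<\ell(v)$ only when $v$ lies in some column $i$ and $u\preceq w$ for some column-$i$ element $w$ of level $\ell(u)$. Within a level the order is the $F_{n+2}$ order. Then $C_i=\{z:t(z)=i\}$ is a chain, since a column element lies below every column element of higher levels. Column points are maximal in their level, so no element of $C_i$ lies below an element of $C_j$ for $i\neq j$; this makes the chains pairwise incomparable. Transitivity has to be checked, but it should reduce to the observation that in $F_{n+2}$ every point below a column-$i$ point also sits below the column-$i$ points reachable from it. This is the first thing I would verify carefully, and I would adjust the definition of the cross-level relation if it fails.

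Next I would show in $\RCA$ that $\dim(P\setminus C,\preceq)\le 2$, where $C=\bigcup_{i<n}C_i$. After removal, each level is the six-point poset of the last example of Section \ref{sec:boundth} (respectively, its relabelled version in $g$-levels), whose two linearizations are given explicitly there. What is left of the cross-level order on $P\setminus C$ is only what transitivity through the removed columns forces, and I would check that every such pair joins the two levels compatibly with a fixed ordering of the levels. Hence the two linearizations can be defined uniformly level by level, as was done for $\trianglelefteq_0$ and $\trianglelefteq_1$ in Theorem \ref{reversaldbcn}, in a computable way that does not require deciding whether $m\in\ran(f)$. Applying $\DBin$ (the bound is $2+n$) then gives $n+2$ linearizations $\trianglelefteq^*_i$ realizing $P$. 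Restricting them to a level $m\in\ran(f)\cup\ran(g)$ gives a realization of $F_{n+2}$ by $n+2$ linearizations, so Lemma \ref{lemmino} applies. The set
$$A=\{m : |\{i<n+2 : y^m \trianglelefteq^*_i x^m\}|=1\}$$
contains $\ran(f)$ and is disjoint from $\ran(g)$, where the count is $n+1$.

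I expect the main obstacle to be choosing the cross-level relation. It must be simultaneously transitive, must make the $n$ columns pairwise incomparable chains, and must be weak enough that $P\setminus C$ still has dimension $2$ uniformly in $\RCA$. This is exactly what fails for $P^k$, where all levels are comparable. If the definition above breaks transitivity, my fallback is to add dedicated auxiliary column points between consecutive levels that carry the chains. These points would be incomparable with the rest of the level except through their own column.
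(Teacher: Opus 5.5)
Your overall shape is right: levels that are copies of $F_{n+2}$, the removed top points arranged in $n$ columns, cross-level comparabilities restricted so the columns become pairwise incomparable chains, and then Lemma \ref{lemmino} to count. Using $F_{n+2}$ with exact counts $1$ versus $n+1$ is even slightly cleaner than the paper's $F_{n+1}$ levels with the bounds $\le 2$ versus $\ge n$. However, the cross-level relation you propose cannot be formed in $\RCA$, and this is a genuine gap.

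The problem is the clause ``$u\preceq w$ for some column-$i$ element $w$ of level $\ell(u)$''. It depends on the type of level $\ell(u)$, which is exactly the undecidable information. Take $u=x^m$:
\begin{itemize}
\item in an $f$-level $x^m$ plays $a_0$ and lies below every column point;
\item in a $g$-level it plays $b_0$ and is maximal;
\item in an antichain level there are no column points.
\end{itemize}
So $x^m\prec v$ for a higher-level column point $v$ holds iff $m\in\ran(f)$, and symmetrically $y^m\prec v$ iff $m\in\ran(g)$. From your $\preceq$ you can decide $m\in\ran(f)$: find $r$ with $f(r)>m$ and test $x^m\prec d^r_0$. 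Hence $\preceq$ computes $\ran(f)$, which is already a separator. In the $\omega$-model of computable sets, with $f,g$ enumerating a computably inseparable pair, your poset does not exist. You named transitivity as the main risk, but the binding constraint is that a point's relation to points of other levels must be decidable from the labels alone.

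The fix is what the paper does with the $d^r_j,q^s_j$. Let every non-column point of a lower level lie below every column point of a higher level, whatever happens inside its own level. Let a column-$i$ point lie below exactly the column-$i$ points of higher levels. Transitivity and the pairwise incomparability of the columns still hold, because column points are maximal in their level.

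There is a second problem, in the dimension-$2$ step. Under your definition, every cross-level comparability has a column point on top. The same is true of the fix above unless more is added. Consequently $P\setminus C$ has no comparabilities between distinct levels at all, and your remark about ``what transitivity through the removed columns forces'' is vacuous. Two linearizations that order the levels the same way, as in Theorem \ref{reversaldbcn}, therefore cannot realize $P\setminus C$: for instance $x^m$ and $x^{m'}$ would be ordered identically in both. You have two options:
\begin{itemize}
\item reverse the level order in the second linearization, which is unproblematic in $\RCA$ since $P\setminus C$ is a parallel sum of levels; or
\item as in the paper, also put every non-column point below every non-column point of a higher level, so that $P\setminus C$ is a linear sum of two-dimensional levels and one level order serves both linearizations.
\end{itemize}
With either repair, the rest of your argument goes through for every $n\ge 2$: apply $\DBin$ with bound $n+2$, restrict to an $F_{n+2}$-level, and use Lemma \ref{lemmino}.
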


\begin{proof}
Theorem \ref{hira-cn} shows one implication, so we deal with the converse.

We already proved the reversal for $n<4$ in Theorem \ref{reversaldbcn} and Corollary \ref{reversaldbc3}.
The proof we are about to give works for $n \ge 3$.
As usual let $f$ and $g$ be 1-1 functions with disjoint ranges.
Let
$$P=\{x^i,y^i: i \in \N\} \cup \{c^r_j, d^r_j : j < n, r \in \N\} \cup \{p^s_j, q^s_j : j < n, s \in \N\}$$
and define $\ell: P \rightarrow \N$ as in Theorem \ref{th:dimension}.
As before, we let  $P_m$ be the level of elements $z$ such that $\ell(z) = m$.

For each $z_1, z_2 \in P$ we stipulate the following:
\begin{itemize}
	\item If neither $z_1$ nor $z_2$ is a $d^r_j$ or a $q^s_k$, then $z_1 \prec z_2$ if and only if either $\ell (z_1) < \ell (z_2)$ or $z_1 = c^r_j$ and $z_2 = y^{f(r)}$ or $z_1 = p^r_j$ and $z_2 = x^{f(r)}$ (this case coincides with what we did in the previous proofs).
	\item If $z_1 = d^r_j$ then $z_1 \prec z_2$ if and only if $z_2=d^k_j$ for some $k$ such that $f(r) < f(k)$ or $z_2=q^s_j$ for some $s$ such that $f(r) < g(s)$ (namely, $z_2$ is a $d^k_j$ or a $q^s_j$ for the same $j$ of $z_1$ and belongs to some higher level).
	If $z_1 = q^s_j$ the definition is analogous.
	\item If $z_2 = d^r_j$ then $z_1 \prec z_2$ if and only if $z_1$ is either $x^{f(r)}$ or $c^r_i$ for $i \ne j$ or $\ell(z_1) < f(r)$ and if $z_1$ is a $d^v_i$ or a $q^s_i$ then $i = j$.
	If $z_2=q^s_j$ the definition is analogous, replacing $x^m$ with $y^m$ and $c^r_i$ with $p^s_i$.
\end{itemize}
Each level $P_m$ is either an antichain of two elements or a copy of $F_{n+1}$.
The difference with the previous constructions lies in the comparabilities between different levels as shown in the figure below, where we are assuming that $f(r) = l$ and $g(s) = m$.
We highlight with a dashed line some of the comparabilities between $d^r_i$'s and $q^s_j$'s: notice that $d^r_0 \nprec p^s_0$ even though $\ell(d^r_0) < \ell(p^s_0)$.

\smallskip
\adjustbox{scale=1,center}{
\begin{tikzcd}
	{x^m} & {\textcolor{blue}{q^s_0}} & \cdots & {\textcolor{blue}{q^s_{n-1}}} \\
	\\
	{y^m} & {p^s_0} & \cdots & {p^s_{n-1}} \\
	\\
	{y^l} & {\textcolor{blue}{d^r_0}} & \cdots & {\textcolor{blue}{d^r_{n-1}}} \\
	\\
	{x^l} & {c^r_0} & \cdots & {c^r_{n-1}}
	\arrow[no head, from=3-1, to=1-2]
	\arrow[no head, from=3-1, to=1-4]
	\arrow[no head, from=3-2, to=1-1]
	\arrow[no head, from=3-2, to=1-4]
	\arrow[no head, from=3-4, to=1-1]
	\arrow[no head, from=3-4, to=1-2]
	\arrow[color={rgb,255:red,214;green,92;blue,92}, ultra thick, no head, from=5-1, to=3-1]
	\arrow[color={rgb,255:red,214;green,92;blue,92}, ultra thick, no head, from=5-1, to=3-2]
	\arrow[color={rgb,255:red,214;green,92;blue,92}, ultra thick, no head, from=5-1, to=3-4]
	\arrow[color={rgb,255:red,92;green,92;blue,214}, dashed, no head, from=5-2, to=1-2, bend right=20]
	\arrow[color={rgb,255:red,92;green,92;blue,214}, dashed, no head, from=5-4, to=1-4, bend right=20]
	\arrow[color={rgb,255:red,214;green,92;blue,92}, ultra thick, no head, from=7-1, to=3-2]
	\arrow[color={rgb,255:red,214;green,92;blue,92}, ultra thick, no head, from=7-1, to=3-4]
	\arrow[no head, from=7-1, to=5-2]
	\arrow[no head, from=7-1, to=5-4]
	\arrow[no head, from=7-2, to=5-1]
	\arrow[no head, from=7-2, to=5-4]
	\arrow[no head, from=7-4, to=5-1]
	\arrow[no head, from=7-4, to=5-2]
\end{tikzcd}}
\smallskip

The verification that we defined a poset is tedious but straightforward.


For each $j < n$ consider the chain $C_j=\{d^r_j, q^s_j: r,s \in \N\}$.
By definition of $\preceq$ these chains are incomparable.
Let $C= \bigcup_{j<n} C_j$ and consider the poset $(P \setminus C, \preceq)$.
If $m \in \ran(f)$ or $m \in \ran(g)$ then $P_m \setminus C$ has one of the following forms.
\[
\begin{tikzcd}[column sep=1.5em, row sep=2em]
	& {y^m} &&&&& {x^m} \\
	\\
	{x^m} & {c^r_0} & \cdots & {c^r_{n-1}} & & {y^m} & {p^s_0} & \cdots & {p^s_{n-1}}
	\arrow[no head, from=1-2, to=3-2] 
	\arrow[no head, from=1-2, to=3-4] 
	\arrow[no head, from=1-7, to=3-7] 
	\arrow[no head, from=1-7, to=3-9] 
\end{tikzcd}
\]
It is easy to check that  $\dim(P \setminus C, {\preceq}) = 2$.
By $\DBin$ we have that $\dim(P, {\preceq}) \le n+2$ and we can fix a set $\{\trianglelefteq^*_0, \ldots, \trianglelefteq^*_{n+1}\}$ of $n+2$ linearizations which realizes $(P, {\preceq})$. 
If $m \in \ran(f)$, then it must be $| \{i < n+2 : y^m\trianglelefteq^*_i x^m\} | \le 2$ and analogously, if $m \in \ran(g)$, then it must be $| \{i < n+2 : y^m \trianglelefteq^*_i x^m\} | \ge n$.
Since we are supposing $n \ge 3$, the set of $m \in \N$ such that $| \{i < n : y^m \trianglelefteq^*_i x^m\} | \le 2$ separates $\ran(f)$ and $\ran(g)$.
\end{proof}

\section{$\DBp$}\label{sec:point}

We now deal with $\DBp$ which was introduced in Section \ref{sec:boundth}.
We recall it: for each poset $(P, \preceq)$ and each $x_0 \in P$, $\dim(P,\preceq) \le \dim(P \setminus \{x_0\},\preceq)+1$.

Let $(P, \preceq)$ be a poset and $I,F \subseteq P$.
We write $I \prec F$ if for each $i \in I$ and each $f \in F$ it holds that $i \prec f$.
An initial interval of $(P, \preceq)$ is a $\preceq$-downward closed set.
An initial interval $B \subseteq P$ that contains $I$ and is disjoint from $F$ is called a separator set for $P,I,F$.
An element $b \in P$ such that $\forall i \in I \, \forall f \in F \, (i \preceq b \preceq f)$ is called a separator element for $P,I,F$.
For simplicity we just say that $B$ is a separator set and $b$ is a separator element without specifying the order and the sets considered when these are clear from the context.

Being able to separate subsets in a poset or in a linear order is crucial for the rest of the section.
We first recall a very general result that was proved in \cite{frittaionmarcone}.

\begin{lemma}[$\RCA$]\label{fm14}
The following are equivalent.
\begin{enumerate}[leftmargin=*,label=(\arabic*)]
    \item $\WKL$.
    \item \label{fritmarc} For each poset $(P, \preceq)$ and sets $I,F \subseteq P$ such that  $f \nprec i$ for each $i \in I$, $f \in F$, there exists a separator set.
\end{enumerate}
\end{lemma}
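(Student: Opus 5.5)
The obstacle here is the degenerate case $I\cap F\neq\emptyset$. If some $x$ lies in $I\cap F$, the hypothesis of \ref{fritmarc} still holds for the pair $(i,f)=(x,x)$, because $x\nprec x$. Yet no initial interval can both contain $x$ and avoid $x$. So, read literally, \ref{fritmarc} fails for trivial reasons: $P=\{x\}$ with $I=F=\{x\}$ is a counterexample in $\RCA$. This reading cannot be the intended one, since it would make the lemma refute $\WKL$.

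I will therefore prove the lemma with the standing convention, implicit in the term ``separator set'', that $I\cap F=\emptyset$. The convention is needed exactly for the pairs $f=i$: for distinct $f,i$ the hypothesis $f\nprec i$ already says $f\npreceq i$. The direct arguments below use only these observations and do not appeal to Theorem \ref{chomarsol}.

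\emph{From $\WKL$ to \ref{fritmarc}.} Fix an enumeration $(p_k)_{k\in\N}$ of $P$. Let $T\subseteq 2^{<\N}$ consist of the strings $\sigma$ such that, for all $j,k<|\sigma|$:
\begin{itemize}
\item if $p_k\in I$ then $\sigma(k)=1$;
\item if $p_k\in F$ then $\sigma(k)=0$;
\item if $\sigma(k)=1$ and $p_j\preceq p_k$ then $\sigma(j)=1$.
\end{itemize}
Each condition is decidable relative to $P,\preceq,I,F$, so $T$ exists by $\Delta^0_1$-comprehension, and $T$ is closed under initial segments.

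To see that $T$ is infinite, fix $n$. Let $\sigma(k)=1$ if and only if $p_k\preceq p_j$ for some $j<n$ with $p_j\in I$; this is the downward closure within the first $n$ points of $I\cap\{p_0,\dots,p_{n-1}\}$. This $\sigma$ has length $n$, satisfies the first and third conditions, and satisfies the second condition by the following argument. If $\sigma(k)=1$ with $p_k\in F$, then $p_k\preceq p_j$ for some $p_j\in I$. If $p_k=p_j$, this contradicts $I\cap F=\emptyset$. Otherwise $p_k\prec p_j$, which contradicts the hypothesis $f\nprec i$. By $\WKL$, $T$ has an infinite path $X$. Then $B=\{p_k : X(k)=1\}$ contains $I$, is disjoint from $F$, and is downward closed, since any two indices $j,k$ are both below some length of $X$.

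\emph{From \ref{fritmarc} to $\WKL$.} I will use item \ref{1} of Theorem \ref{chomarsol}. Let $f,g$ be 1-1 functions with disjoint ranges. Take
$$P=\{x_n : n\in\N\}\cup\{u_k,v_k : k\in\N\},$$
whose only strict comparabilities are $x_{f(k)}\prec u_k$ and $v_k\prec x_{g(k)}$. This relation is computable from $f$ and $g$, since for each $k$ we just evaluate $f(k)$ and $g(k)$. It has no chains of length $3$, because $u$'s are only above $x$'s, $v$'s are only below $x$'s, and no $x_n$ is both $x_{f(k)}$ and $x_{g(j)}$. So it is a partial order.

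Let $I=\{u_k : k\in\N\}$ and $F=\{v_k : k\in\N\}$. These are disjoint sets, and no $v_k$ is below any $u_j$, because any path from $v_k$ to $u_j$ would need some $x$ lying in both ranges. Let $B$ be a separator set given by \ref{fritmarc}. Since $u_k\in B$ and $B$ is downward closed, $x_{f(k)}\in B$. On the other hand, $x_{g(k)}\in B$ would force $v_k\in B$, which contradicts $B\cap F=\emptyset$. Hence $A=\{n : x_n\in B\}$ separates $\ran(f)$ from $\ran(g)$.

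The only delicate step is the one noted at the start: the hypothesis $f\nprec i$ is used only for $f\neq i$, with disjointness of $I$ and $F$ covering $f=i$. This is exactly what makes the finite approximations in $T$ avoid $F$.
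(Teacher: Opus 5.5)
Your diagnosis of the hypothesis is correct. As printed, with $f\nprec i$, item \ref{fritmarc} is refuted in $\RCA$ by any instance with $I\cap F\neq\emptyset$. The intended condition is $f\npreceq i$, exactly as in item \ref{3} of Theorem \ref{chomarsol}. The slip is harmless for the paper, which only uses the lemma to get $\WKL\vdash\mathsf{LS}^*$, and there $I\lhd F$ already forces $I\cap F=\emptyset$.

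With this correction your proof is correct, but it is not the paper's route. The paper gives no argument of its own: the lemma is just item \ref{3} of Theorem \ref{chomarsol}, quoted from \cite{frittaionmarcone}. You instead make it self-contained. For the forward direction you use a tree shown to be infinite via the finite downward closures of $I\cap\{p_0,\dots,p_{n-1}\}$. For the reversal you code two disjoint ranges into a poset with no $3$-element chains, reducing to the classical item \ref{1} of Theorem \ref{chomarsol} rather than to the cited result.

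The citation is all the paper needs. Your version is longer, but it shows precisely where disjointness of $I$ and $F$ enters: only for the pairs $f=i$, in proving that $T$ is infinite. That is what exposes the misprint. Your reversal poset is also about as simple as possible; it does not even use that $f$ and $g$ are injective.
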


We are interested in studying a weaker version of \ref{fritmarc} of Lemma \ref{fm14} that we call $\mathsf{LS}$.
\begin{itemize}
    \item $\mathsf{LS}$ : for each linear order $(L, \trianglelefteq)$ and each sets $I,F \subseteq L$ such that $I \lhd F$, there exists a separator set $B$ for $L,I,F$.
\end{itemize}

\begin{lemma}\label{separatorrca}
$\RCA \vdash \mathsf{LS}$.
\end{lemma}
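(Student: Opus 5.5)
The argument splits according to whether $I$ has a $\trianglelefteq$-maximum or $F$ has a $\trianglelefteq$-minimum. The key observation is that in a linear order, a downward closed set containing $I$ and disjoint from $F$ can be given by a $\Delta^0_1$ definition once we have one witness to compare against.

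\emph{Case 1: $I$ is empty, or $F$ is empty.} If $I=\emptyset$, take $B=\emptyset$. If $F=\emptyset$, take $B=L$. Both exist trivially and are separator sets.

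\emph{Case 2: both are nonempty.} Fix $f_0\in F$ and $i_0\in I$. For $x\in L$ consider the two formulas
\[
\varphi(x)\equiv \exists i\in I\,(x\trianglelefteq i), \qquad \psi(x)\equiv \exists f\in F\,(f\trianglelefteq x).
\]
Both are $\Sigma^0_1$. Since $\trianglelefteq$ is linear and $I\lhd F$, every $x$ satisfies $\varphi(x)\vee\neg\psi(x)$ or $\neg\varphi(x)\vee\psi(x)$, but this alone does not give complementation: an element strictly between $I$ and $F$ satisfies neither formula. So $\Delta^0_1$ comprehension does not apply directly to $\varphi$. The plan is instead to decide each $x$ by a search that is guaranteed to terminate.

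\emph{The search.} Given $x$, simultaneously enumerate $I$ and $F$ looking for some $i\in I$ with $x\trianglelefteq i$ or some $f\in F$ with $f\trianglelefteq x$. If an element $x$ lies strictly between $I$ and $F$, the search never halts. This non-termination is the main obstacle. I would resolve it by not insisting on deciding such $x$ by a search, and instead using the fixed elements:
\[
B=\{x : x\lhd i_0\}\ \cup\ \{x : i_0\trianglelefteq x \wedge \exists i\in I\,(x\trianglelefteq i)\}.
\]
This still needs a decision procedure. The cleaner route is to define $B$ by a $\Delta^0_1$ formula with parameters $i_0$ and $f_0$, and to note that elements between $I$ and $F$ may be put either side, provided downward closure is preserved.

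\emph{Fallback.} If the above cannot be made $\Delta^0_1$, I would argue computably in stages instead. List $L=\{\ell_n\}$ and build $B$ by deciding membership of $\ell_n$ at stage $n$. At each stage we have a finite set already placed in $B$ and a finite set placed outside it, with every element placed in $B$ below every element placed outside. Put $\ell_n$ in $B$ iff $\ell_n\trianglelefteq$ some element already placed in $B$, or $\ell_n\trianglelefteq$ some element of $I$ among $\{\ell_0,\dots,\ell_n\}$. Otherwise put it outside.

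For this stage construction, first check the invariant. If $\ell_n\trianglelefteq b$ for some $b\in B$, every later $c\notin B$ satisfies $b\lhd c$, so $\ell_n$ is below the outside set. If $\ell_n$ goes outside, it is above everything in $B$ by linearity, and it is not in $I$ since an element of $I$ is $\trianglelefteq$ itself.

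Next check that $F$ is disjoint from $B$. Elements of $F$ are never $\trianglelefteq$ an element of $I$. They also never go below an earlier element of $B$: by $\Sigma^0_1$ induction, every element of $B$ is $\trianglelefteq$ some element of $I$, and $I\lhd F$.

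Finally, downward closure follows from the invariant together with linearity. The set $B$ is $\Delta^0_1$ in the parameters, so it exists in $\RCA$.
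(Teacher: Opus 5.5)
\textbf{There is a genuine gap: the proposal does not prove the lemma.}

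Your Case 2 never produces an actual $\Delta^0_1$ definition. The set you write down using $i_0$ is just $\{x : \exists i\in I\,(x\trianglelefteq i)\}$ again, which is only $\Sigma^0_1$. So everything rests on the fallback, and the fallback fails: the set it builds need not be downward closed. Your invariant check covers two kinds of placement: elements that enter $B$ by being below something already in $B$, and elements that go outside. It does not cover elements that enter $B$ because an element of $I$ has just been enumerated. That element of $I$ can lie above something you have already irrevocably put outside.

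Concretely, let $z\lhd i$ with $i\in I$, $z\notin I$, and list $\ell_0=z$, $\ell_1=i$. At stage $0$ the set $B$ is empty and no element of $I$ has appeared, so $z$ goes outside. At stage $1$ we have $i\trianglelefteq i\in I$, so $i$ goes into $B$. Now $z\lhd i$ with $i\in B$ and $z\notin B$, so $B$ is not downward closed.

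This is not a repairable detail. Your procedure computes $B$ uniformly from $(L,I,F)$. Proposition~\ref{LSWei} shows $\mathsf{LS}\equiv_{\mathrm{W}}\mathsf{XC}_1$, so $\mathsf{LS}$ has no computable uniform solution. This holds even on instances with $I,F$ nonempty, since the reduction from $\mathsf{XC}_1$ produces such instances. Some non-uniform information must enter the proof.

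The missing idea is the obstruction you spotted yourself: the only elements satisfying neither $\varphi$ nor $\psi$ are those lying strictly between $I$ and $F$. Split classically on whether a separator element exists, i.e.\ some $b$ with $i\trianglelefteq b\trianglelefteq f$ for all $i\in I$ and $f\in F$.

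\emph{If such a $b$ exists:} take $B=\{x : x\lhd b\}$ when $b\in F$, and $B=\{x : x\trianglelefteq b\}$ otherwise. This exists in $\RCA$ with $b$ as a parameter and is easily checked to be a separator set.

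\emph{If no such $b$ exists:} every $x$ satisfies at least one of $\varphi(x)$ and $\psi(x)$, since otherwise $x$ would be a separator element. It cannot satisfy both, as $x\trianglelefteq i\lhd f\trianglelefteq x$ is impossible. Hence $\varphi(x)\leftrightarrow\neg\psi(x)$, and $B=\{x:\varphi(x)\}$ exists by $\Delta^0_1$ comprehension. This $B$ is downward closed, contains $I$, and is disjoint from $F$.

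This is the paper's proof, and it also makes your Case 1 unnecessary.
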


\begin{proof}
Suppose first that there exists a separator element $b \in L$.
If $b \in F$ then let $B = \{x \in L : x \lhd b\}$, otherwise let $B = \{x \in L : x \trianglelefteq b\}$.
Then $B$ exists in $\RCA$ and has the desired properties.

If there are no separator elements, let $B = \{x \in L : \exists i \in I (x \trianglelefteq i) \}$.
By hypothesis $x \in B$ if and only if $\forall f \in F (x \lhd f)$.
Therefore, $B$ can be defined by $\Delta^0_1$ comprehension and again has the required properties.
\end{proof}

We also need the finite parallelization of $\mathsf{LS}$ which we denote by $\mathsf{LS}^*$ following notational conventions from Weihrauch reducibility.
\begin{itemize}
    \item $\mathsf{LS}^*$ : for each $k$ and each finite sequence $(L_j, I_j, F_j)_{j < k}$ where each $(L_j, \trianglelefteq_j)$ is a linear order and $I_j \lhd_j F_j$, there exists a sequence $(B_j)_{j < k}$ such that each $B_j$ is a separator set for $L_j, I_j, F_j$.
\end{itemize}

Notice that the proof of Lemma \ref{separatorrca} relies on non uniform information: whether or not a separator element $b \in P$ exists.
Therefore $\mathsf{LS}^*$ may not be available in $\RCA$ and we will show in Corollary \ref{reversemathresult} that this is the case.

As we will see, $\mathsf{LS}^*$ is tightly connected to $\DBp$.
We show that $\mathsf{LS}^*$ is equivalent to a strong version of $\DBp$ that we call $\DBpp$.
\begin{itemize}
    \item $\DBpp$: for each poset $P$ and each $x_0 \in P$, if $(L_j)_{j < n}$ realizes $P \setminus \{x_0\}$ then there exist $(L_j')_{j < n+1}$ which realize $P$ and such that for $j < n-1$, $L_j = L'_j \setminus \{x_0\}$. 
\end{itemize}

$\DBpp$ establishes a connection between the realization of $P \setminus \{x_0\}$ and the realization of $P$ we obtain.
$\DBp$ does not require this very natural correlation, which actually occurs in every known proof of $\DBp$.
Hence, $\DBpp$ is a reasonable strengthening of $\DBp$.

\begin{theorem}[$\RCA$]\label{sep-dbp}
$\mathsf{LS}^*$ is equivalent to $\DBpp$.
\end{theorem}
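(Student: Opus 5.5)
The forward direction formalizes the classical proof of $\DBp$, using one application of $\mathsf{LS}^*$ to reinsert $x_0$ into all but the last of the given linearizations. The reverse direction codes a finite family of $\mathsf{LS}$-instances into a single poset with one distinguished point. Throughout, for $x_0 \in P$ let $U = \{y \in P : x_0 \prec y\}$ and $D = \{y \in P : y \prec x_0\}$; both exist in $\RCA$.

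\emph{$\mathsf{LS}^* \Rightarrow \DBpp$.} Let $(L_j)_{j<n}$, with orders $\trianglelefteq_j$, realize $P \setminus \{x_0\}$.
\begin{enumerate}[leftmargin=*,label=(\alph*)]
\item For each $j < n-1$ we have $D \lhd_j U$: if $d \prec x_0 \prec u$ then $d \prec u$, so $d \lhd_j u$. A single application of $\mathsf{LS}^*$ with $k = n-1$ gives separator sets $B_j$ for $L_j, D, U$.
\item For $j < n-1$ define $L'_j$ by inserting $x_0$ immediately above $B_j$. Then $L'_j \setminus \{x_0\} = L_j$, and $L'_j$ extends $\preceq$ because $D \subseteq B_j$ and $B_j \cap U = \emptyset$, with $B_j$ downward closed.
\item The last two orders are defined outright, uniformly in $\RCA$ with no separation needed. $L'_{n-1}$ lists $P \setminus (U \cup \{x_0\})$ in the order $\trianglelefteq_{n-1}$, then $x_0$, then $U$ in the order $\trianglelefteq_{n-1}$. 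This extends $\preceq$ because $U$ is upward closed. $L'_n$ lists $D$ in the order $\trianglelefteq_{n-1}$, then $x_0$, then the rest in the order $\trianglelefteq_{n-1}$.
\item Realization, case $y \mid x_0$: then $y \notin U \cup D$, so $L'_{n-1}$ puts $y$ below $x_0$ and $L'_n$ puts $x_0$ below $y$.
\item Realization, case $y \mid z$ in $P \setminus \{x_0\}$: if only $\trianglelefteq_{n-1}$ reverses the pair, note that $L'_{n-1}$ agrees with $\trianglelefteq_{n-1}$ unless exactly one of $y,z$ lies in $U$. Similarly $L'_n$ agrees unless exactly one lies in $D$. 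Both cannot happen: an element of $U$ and an element of $D$ are comparable, contradicting $y \mid z$. So one of $L'_{n-1}, L'_n$ reverses the pair.
\end{enumerate}

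\emph{$\DBpp \Rightarrow \mathsf{LS}^*$.} Given $(L_j, I_j, F_j)_{j<k}$, let $Q = \{(j,a) : j<k, a \in L_j\}$. Call $(j,a)$ of type $I$, $F$, or $N$ according as $a \in I_j$, $a \in F_j$, or neither. The only strict relations on $Q$ are: every type-$I$ element lies below every type-$F$ element, across all blocks. Let $P = Q \cup \{x_0\}$ with type-$I$ elements $\prec x_0 \prec$ type-$F$ elements. One checks this is a poset in $\RCA$. Define $n = k+2$ linearizations of $Q$:
\begin{enumerate}[leftmargin=*,label=(\roman*)]
\item For $j<k$, $M_j$ lists the type-$I$ elements of blocks $\neq j$, then block $j$ in the order of $L_j$, then the type-$N$ elements of other blocks, then the type-$F$ elements of other blocks. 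This extends the order on $Q$ because $I_j \lhd_j F_j$.
\item $M_k$ has the class order $N$, $I$, $F$; $M_{k+1}$ has the class order $I$, $F$, $N$. Within each class, $M_{k+1}$ uses the reverse of the order used by $M_k$.
\end{enumerate}
The orders $M_k$ and $M_{k+1}$ alone already realize $Q$. Hence by $\DBpp$, since $n - 1 = k+1 > k$, we obtain linearizations $L'_j$ of $P$ with $L'_j \setminus \{x_0\} = M_j$ for all $j < k$. Set
$$B_j = \{a \in L_j : (j,a) \text{ lies below } x_0 \text{ in } L'_j\}.$$
Then $B_j$ contains $I_j$ and misses $F_j$, by the comparabilities of $x_0$. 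It is downward closed in $L_j$ because $M_j$ restricted to block $j$ is $L_j$. This yields $(B_j)_{j<k}$ uniformly.

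\emph{Main obstacle.} The delicate point is the bookkeeping in the reverse direction. The $M_j$ must genuinely be linearizations respecting the cross-block relation from type $I$ to type $F$. Also, the coordinates that $\DBpp$ preserves ($j < n-1$) must include all $k$ coded orders, which is why exactly two extra "filler" linearizations are added. In the forward direction the point to check carefully is the case analysis in (e).
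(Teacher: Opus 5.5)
Your proof is correct. The forward direction is the paper's own argument. The paper also makes one application of $\mathsf{LS}^*$ to $(\trianglelefteq_j, D, U)_{j<n-1}$ (there called $I$ and $F$) and inserts $x_0$ just above each separator. It then replaces $\trianglelefteq_{n-1}$ by the same two orders you call $L'_n$ and $L'_{n-1}$, with the same observation that a pair reversed by $\trianglelefteq_{n-1}$ is reversed by one of them.

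The reverse direction uses a genuinely different coding.

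\emph{The paper's coding.} The paper first pads each $L_j$ so that $I_j$, $F_j$ and $L_j\setminus(I_j\cup F_j)$ are all infinite. It then re-indexes all instances onto the common domain $\N\setminus\{0\}$, so that they literally share the same sets $I$ and $F$. It adds one auxiliary order ($I$, then the rest, then $F$) and defines the poset as the intersection of these $k+1$ orders. Finally it places $0$ between $I$ and $F$; the auxiliary order is what makes this transitive. Realization of $P\setminus\{0\}$ then holds by definition, and a single extra order suffices.

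\emph{Your coding.} You take a disjoint union and a height-two poset that depends only on the sets $I_j$, $F_j$. Transitivity of $P$ is then trivial, and no padding or re-indexing is needed. The orders $L_j$ enter only as blocks inside the $M_j$. The price is checking that each $M_j$ extends the poset, and adding the fillers $M_k, M_{k+1}$ to secure realization.

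One small correction to your closing remark: two fillers are not forced by the index bookkeeping. One extra order would already put all $k$ coded orders below index $n-1$. They are needed for realization. Two elements of the same type in the same block are ordered by $L_j$ in $M_j$ and by the fixed within-class order in every other $M_{j'}$. So nothing among $M_0,\dots,M_{k-1}$ need reverse them, and that is exactly what the pair $M_k, M_{k+1}$ takes care of.
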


\begin{proof}
For the forward direction, let $(P, \preceq)$ be a poset.
Let $x_0 \in P$ and let $\{\trianglelefteq_0, \ldots, \trianglelefteq_{n-1}\}$ be a realization of $(P \setminus \{x_0\}, \preceq)$.

Let $I=\{x: x \in P \wedge x \prec x_0\}$ and $F= \{x: x \in P \wedge x_0 \prec x \}$ which exist in $\RCA$ and are respectively downward and upward closed in $P$.
Notice that by transitivity $I \prec F$ and since ${\prec} \subseteq {\lhd_i}$ we have $I \lhd_i F$ for each $i < n$.
Define two linearizations $\trianglelefteq_{n-1}^0,\trianglelefteq_{n-1}^1$ of $(P, \preceq)$ starting from $\trianglelefteq_{n-1}$ by
$$I \trianglelefteq_{n-1}^0 \{x_0\} \trianglelefteq_{n-1}^0 P \setminus (\{x_0\} \cup I)$$
$$P \setminus (\{x_0\} \cup F) \trianglelefteq_{n-1}^1 \{x_0\} \trianglelefteq_{n-1}^1 F$$
where each segment is ordered by $\trianglelefteq_{n-1}$. 
Notice that $\RCA$ proves the existence of each segment and (using $I, F$ and $\trianglelefteq_{n-1}$ as parameters) of the linear orders $\trianglelefteq_{n-1}^0$ and $\trianglelefteq_{n-1}^1$.
It is clear that $\trianglelefteq_{n-1}^0$ and $\trianglelefteq_{n-1}^1$ are linearizations of $\preceq$.
Notice that if $y \trianglelefteq_{n-1} z$ then at least one of $y \trianglelefteq^0_{n-1} z$ and $y \trianglelefteq^1_{n-1} z$ holds.

Next we want to extend each $\trianglelefteq_i$ for $i < n-1$ to a linearization of $(P, \preceq)$.
To this end, we apply $\mathsf{LS}^*$ to the sequence $(\trianglelefteq_i, I, F)_{i < n-1}$ to obtain, for each $i < n-1$, a separator set $B_i$ with respect to $\trianglelefteq_i$.
Then we define linearizations $\trianglelefteq_i^*$ of $(P, \preceq)$ each extending the corresponding $\trianglelefteq_i$ by
$$B_i \trianglelefteq_i^* \{x_0\} \trianglelefteq_i^* P \setminus (\{x_0\} \cup B_i)$$
where each segment is ordered by $\trianglelefteq_i$.

Finally we claim that the set  $\{\trianglelefteq_0^*, \ldots, \trianglelefteq_{n-1}^0, \trianglelefteq_{n-1}^1\}$ realizes $(P, \preceq)$.
We need to prove that if $z \npreceq y$ then either $y \trianglelefteq^0_{n-1} z$ or $y \trianglelefteq^1_{n-1} z$ or $y \trianglelefteq_i^* z$ for some $i < n-1$.
If $y = x_0$ then $z \notin I$ and $y \trianglelefteq^0_{n-1} z$, while if $z = x_0$ then $y \notin F$ and $y \trianglelefteq^1_{n-1} z$.
If neither $y$ nor $z$ are $x_0$ we have $y \trianglelefteq_i z$ for some $i<n$.
If $i=n-1$ we already noticed that at least one of $y \trianglelefteq^0_{n-1} z$ and $y \trianglelefteq^1_{n-1} z$ holds.
If $i<n-1$ then, since $z \in B_i$ and $y \notin B_i$ cannot hold, we have $y \trianglelefteq_i^* z$.

\medskip

For the converse direction let $(L_j, I_j, F_j)_{j < n}$ be a sequence of linear orders with sets to be separated.
First we extend each $L_j$ in $\RCA$ so that $I_j$, $F_j$ and $L_j \setminus (I_j \cup F_j)$ are infinite.
This can be achieved by adding infinitely many new elements below $L_j$, stipulate that they belong to $I_j$, and adding infinitely many new elements above $L_j$ and stipulate that infinitely and coinfinitely many of them belong to $F_j$.
We still denote the new sequence by $(L_j, I_j, F_j)_{j < n}$ and notice that it suffices to find a solution for it.

Next we identify all the elements of $I_j$ (respectively, $F_j$) throughout all the linear orders $L_j$ as follows.
We may assume that the support of each $L_j$ is $\N$.
Let $f_j \colon \N \to \N \setminus \{0\}$ be the bijection defined as
$$f_j(y) = 
\begin{cases}
\text{the least unused } n \equiv 0 \mod 3 & \text{if } y \in I_j; \\
\text{the least unused } n \equiv 1 \mod 3 & \text{if } y \in F_j; \\
\text{the least unused } n \equiv 2 \mod 3 & \text{otherwise.}
\end{cases}$$
We define the linear order $\lhd_j$ on $\N \setminus \{0\}$ as $n \lhd_j m$ if and only if $f_j^{-1} (n)$ is below $f_j^{-1} (m)$ in $L_j$.
Since $f_j$ is a computable isomorphism, the new linear orders (which we still denote by $L_j$) exists in $\RCA$.
Now for each $j$ we have $I_j = \{n > 0 : n \equiv 0 \mod 3\}$ and $F_j = \{n : n \equiv 1 \mod 3\}$.
We simply call $I$ and $F$ those sets.
Moreover, it suffices to find a solution for the sequence $(L_j, I, F)_{j < n}$.

We define an additional linear order $L_n$ on $\N \setminus \{0\}$: we stipulate $y \lhd_n z$ if and only if either $y \equiv z \mod 3$ and $y < z$ (in the standard ordering of $\N$), or $y \equiv 0 \mod 3$ and $z$ is not, or $y \equiv 2 \mod 3$ and $z \equiv 1 \mod 3$.
In other words in $L_n$ the elements of $I$ are followed by the elements of $\N \setminus (I \cup F)$, which in turn are followed by the elements of $F$.

Let $\prec$ be the binary relation on $\N$ obtained by taking the intersection of the $L_j$'s for $j < n+1$, and then adding $0$ so that $I \prec 0 \prec F$ and $0$ is incomparable with everything else.
We claim that $P = (\N, \prec)$ is a poset.
The only non trivial facts to prove is that if $z \prec y$ and $y \prec 0$ then $z \prec 0$, and its dual if $0 \prec y$ and $y \prec z$ then $0 \prec z$.
In the first case, by definition $y \in I$ and by construction of $L_n$ it must be $z \in I$ too.
So $z \prec 0$ by definition.
The dual is proved in the same way, using $F$ in place of $I$.

By definition of $P$ the sequence $(L_j)_{j<n+1}$ realizes $P \setminus \{0\}$.
By the statement, let $(L'_j, \lhd_j')_{j < n+2}$ be a realization of $P$ such that for $j < n$ $\lhd_j = \lhd_j' \restriction \N \setminus \{0\}$.
Then we uniformly separate $(L_j, I_j, F_j)_{j < n}$ by setting $B'_j = \{x \in L'_j : x \lhd_j' 0\} \subset L_j$.
\end{proof}

We immediately derive the following.

\begin{corollary}[$\RCA$]
$\mathsf{LS}^* \vdash \DBp$.
\end{corollary}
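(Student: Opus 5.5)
The corollary follows by combining Theorem \ref{sep-dbp} with the observation that $\DBpp$ trivially implies $\DBp$. Working in $\RCA$ and assuming $\mathsf{LS}^*$, Theorem \ref{sep-dbp} gives $\DBpp$. We then check that $\DBpp$ yields $\DBp$ in the formalized sense fixed in Section \ref{sec:boundth}. That sense reads: for every $m$ and every realization of $P\setminus\{x_0\}$ by $m$ linearizations, there is a realization of $P$ by $m+1$ linearizations.

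So we fix a poset $(P,\preceq)$, a point $x_0\in P$, a number $m$, and a realization $(L_j)_{j<m}$ of $P\setminus\{x_0\}$. Applying $\DBpp$ with $n=m$ produces $(L'_j)_{j<m+1}$ realizing $P$. The extra clause of $\DBpp$, that $L'_j\setminus\{x_0\}=L_j$ for $j<m-1$, is simply discarded. This gives $\dim(P)\le\dim(P\setminus\{x_0\})+1$ as formalized.

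The only point needing care is a degenerate case. If $P\setminus\{x_0\}$ is empty, or if $m=0$ is allowed, the indexing $j<n-1$ in $\DBpp$ is vacuous and the statement still gives the required $n+1$ linearizations. Otherwise $m=0$ cannot realize a nonempty poset. In either case the conclusion holds trivially, since a single linearization of the one-point poset exists by Theorem \ref{szpilrajn}.

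There is no real obstacle. The substantive work has already been done in the forward direction of Theorem \ref{sep-dbp}, and the corollary is just a weakening of its conclusion.
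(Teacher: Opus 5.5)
Your proposal is correct and is exactly the paper's argument: the corollary follows immediately from Theorem \ref{sep-dbp}, since $\DBpp$ is just $\DBp$ (in the formalized sense of Section \ref{sec:boundth}) plus an extra compatibility clause that can be discarded. The paragraph on degenerate cases is not needed and is somewhat muddled, but it does no harm to the argument.
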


We now focus on studying $\mathsf{LS}^*$.
We already know that $\WKL \vdash \mathsf{LS}^*$ by a simple application of Lemma \ref{fm14}.
The next result implies that the reverse implication cannot hold.

\begin{lemma}[$\RCA$]\label{leastsepel}
The following are equivalent.
\begin{enumerate}[leftmargin=*,label=(\arabic*)]
    \item \label{5.1.1} $\I \SI02$.
    \item \label{5.1.2} For each sequence of linear orders with sets to be separated $(L_i,I_i,F_i)_{i < n}$, there exists the set $X = \{j < n : \exists b \in L_j \, \forall i \in I_j \, \forall f \in F_j \, (i \trianglelefteq_j b \trianglelefteq_j f)\}$.
\end{enumerate}
\end{lemma}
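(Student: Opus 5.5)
The plan is to prove the two directions separately. For \ref{5.1.1} $\Rightarrow$ \ref{5.1.2}, observe that the defining condition of $X$,
$$\exists b \in L_j\, \forall i \in I_j\, \forall f \in F_j\, (i \trianglelefteq_j b \trianglelefteq_j f),$$
is a $\SI02$ formula in $j$ with the sequence $(L_i,I_i,F_i)_{i<n}$ as a parameter. Hence $X$, being bounded by $n$, exists by bounded $\SI02$ comprehension. By Theorem \ref{isigma}, this is available under $\I\SI02$. No further work is needed in this direction.

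For \ref{5.1.2} $\Rightarrow$ \ref{5.1.1}, again use Theorem \ref{isigma} and derive bounded $\SI02$ comprehension. It is convenient to reduce to bounded $\PI01$ comprehension, which over $\RCA$ is equivalent to $\I\SI01$. Instead, I will directly code a $\SI02$ formula. Fix $\varphi(x) \equiv \exists u\, \forall v\, \theta(x,u,v)$ with $\theta$ being $\SI00$, and fix $z$. Using only $\RCA$, I will build uniformly in $x<z$ a linear order $L_x$ with sets $I_x \lhd F_x$ such that a separator element exists if and only if $\varphi(x)$ holds. Then $X$ from \ref{5.1.2} is exactly $\{x<z : \varphi(x)\}$.

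The construction, for fixed $x$, is as follows. Let $I_x$ be an $\omega$-sequence $i_0 \lhd i_1 \lhd \cdots$ and $F_x$ an $\omega^*$-sequence $\cdots \lhd f_1 \lhd f_0$ above it. For each candidate witness $u$, insert an element $b_u$ between the two. Since $L_x$ must be computable, the position of $b_u$ is decided stage by stage. At stage $s$, if $\forall v<s\, \theta(x,u,v)$ still holds, place $b_u$ above $i_s$ (keeping it below all $f$'s). As soon as some $v$ with $\neg\theta(x,u,v)$ appears at stage $s$, freeze $b_u$: declare it below all later $i_t$, or alternatively move it into a position that is already committed relative to only finitely many $i$'s. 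This requires choosing the $i_t$'s to be enumerated dynamically, so that each new $i_t$ is inserted above the $b_u$'s still alive and below the frozen ones' successors. Concretely, each new $i_t$ is placed immediately below the least alive $b_u$ with $u \le t$, and above everything already frozen. Then $b_u$ lies above all of $I_x$ exactly when $\forall v\,\theta(x,u,v)$. Moreover every $b_u$ is below $F_x$, and no other element separates, because frozen $b_u$'s are below some $i_t$. The elements $i$ and $f$ themselves cannot separate because $I_x$ has no maximum and $F_x$ has no minimum. So a separator element exists iff $\exists u\,\forall v\,\theta$, that is iff $\varphi(x)$.

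The main obstacle is making the order $L_x$ a genuine $\DE01$ linear order. Each comparison between two elements must be decidable at a finite stage, and the dynamic insertion of $I_x$ elements must stay consistent with the freezing of the $b_u$'s. I would first try defining the order by letting the pair $(b_u, i_t)$ be decided at stage $\max(u,t)$. Under this rule, $b_u \lhd i_t$ iff $b_u$ was frozen before stage $t$, and otherwise $i_t \lhd b_u$. I would also order the $b_u$'s among themselves by index, and check transitivity by cases. The remaining check is that the whole sequence is uniform in $x$, which is straightforward in $\RCA$.
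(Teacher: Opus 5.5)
Your forward direction and your overall strategy match the paper's: derive bounded $\Sigma^0_2$ comprehension via Theorem \ref{isigma}, coding $\varphi(x)\equiv\exists u\,\forall v\,\theta(x,u,v)$ into a computable linear order in which $I_x$ has type $\omega$, $F_x$ has type $\omega^*$, and candidate elements $b_u$ sit in between. There is, however, a genuine gap: testing all candidates in parallel does not give a linear order.

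Take your concrete rule ($b_u \lhd i_t$ iff $b_u$ was frozen before stage $t$, with the $b_u$'s ordered by index). Suppose $u<u'$ and $b_{u'}$ is frozen before $b_u$, for instance $\neg\theta(x,u',0)$ while $\theta(x,u,v)$ holds for all $v<t$. Then for suitable $t$ you get $b_u \lhd b_{u'} \lhd i_t \lhd b_u$, which is a cycle. Your variant, which places $i_t$ immediately below the least alive $b_u$ and above everything frozen, fails for the same reason: there is no room for $i_t$ when a frozen $b_{u'}$ lies above an alive $b_u$. (Your sentence that $i_t$ is inserted ``above the $b_u$'s still alive'' also has the direction reversed.)

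Changing the order on the $b_u$'s does not help. In a computable order, the comparison between $b_u$ and $b_{u'}$ must be settled at a finite stage, possibly while both are alive. If the one placed higher is later refuted while the other survives, it must go below some $i_t$ that is below the other, which is a contradiction. So a candidate may be frozen only after every candidate below it has been frozen, and that is a sequential search.

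The missing idea is the paper's: test the candidates one at a time, using a pointer $x^s$ that starts at $0$.
\begin{itemize}
\item If $\forall v\le s\,\theta(x,x^s,v)$, put $i_s$ below $b_m$ for all $m\ge x^s$ and keep the pointer.
\item Otherwise, freeze $b_{x^s}$ below $i_t$ for all $t\ge s$, put $i_s$ below $b_m$ for $m>x^s$, and set $x^{s+1}=x^s+1$.
\end{itemize}
A candidate above the pointer stays alive even if it is already refuted. Consequently candidates are frozen in increasing order of index, the index order on the $b_m$'s is consistent, and the order is $\Delta^0_1$ uniformly in $x<z$.

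If $\bar u$ witnesses $\varphi(x)$, the pointer never passes $\bar u$, so $b_{\bar u}$ is a separator element. If $\neg\varphi(x)$, then $\Sigma^0_1$ induction gives $\forall u\,\exists s\,(x^s=u)$, so every $b_u$ is eventually below some element of $I_x$. This use of $\I\SI01$ is the price of the sequential search, and it is available in $\RCA$.
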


\begin{proof}
\ref{5.1.1} implies \ref{5.1.2} because the existence of $X$ is an instance of bounded $\SI02$ comprehension which is equivalent to $\I \SI02$ (Theorem \ref{isigma}).

For the converse, we show that \ref{5.1.2} implies bounded $\SI02$ comprehension.
Let $\varphi(j)$ be a $\SI02$ formula of the form $\exists x \, \forall y \, \psi(j,x,y)$.
For each $n$, we need to prove that the set $\{j < n : \varphi(j)\}$ exists.
We aim to construct a finite sequence of $n$ linear orders $(L_j, \trianglelefteq_j)_{j < n}$, each with subsets $I_j, F_j$ to be separated.
We perform the construction to satisfy the following: for each $j < n$, $\varphi(j)$ holds if and only if the linear order $(L_j, \trianglelefteq_j)$ has a separator element.
Then it is clear that the set $X$ of \ref{5.1.2} is the set $\{j < n : \varphi(j) \}$.

Fix $j < n$.
The linear order $(L_j, \trianglelefteq_j)$ has domain $\N$, $I_j = \{m \in \N : m \equiv 0 \mod 3\}$ and $F_j = \{m \in \N : m \equiv 1 \mod 3\}$.
We stipulate that $\forall l \in I_j \, \forall m \in F_j \, (l \lhd_j m)$, $\forall l,m \in I_j \, (l \lhd_j m \leftrightarrow l < m)$ and $\forall l,m \in F_j \, (l \lhd_j m \leftrightarrow m < l)$ (where $<$ denotes the standard ordering of $\N$).
Since $I_j$ has no maximum and $F_j$ has no minimum, a separator element must belong to the set $Z_j = \{m \in \N : m \equiv 2 \mod 3\}$.
We also stipulate that $\forall l \in Z_j \, \forall m \in F_j \, (l \lhd_j m)$ and $\forall l,m \in Z_j \, (l \lhd_j m \leftrightarrow l < m)$

We design a strategy to define the comparabilities of the elements of $I_j$ and $Z_j$ in such a way to ensure that $\varphi(j)$ holds if and only if there is a separator element for $L_j, I_j, F_j$.
In other words, as long as $x$ appears to witness $\varphi(j)$, the strategy keeps $3x+2$ above all elements of $I_j$.
We proceed by stages: at stage $s$ we stipulate (at least) the comparabilities between each element of $Z_j$ and $3s \in I_j$.
The strategy keeps parameters $x_j^s$ for all $j < n$ to mark the current possible existential witness for $\varphi(j)$.
We start by setting $x_j^0 = 0$ for all $j < n$.\medskip

\textit{Stage $s$}. 
For each $j < n$ if $\forall y \le s \, \psi(j,x_j^s,y)$ holds, then we stipulate that $\forall m \ge x_j \, (3s \lhd_j 3m + 2)$ and let $x_j^{s+1} = x_j^s$.
On the other hand, if $\exists y \le s \, \neg \psi(j,x_j^s,y)$ holds, then we stipulate that $\forall t \ge s \, (3x_j^s + 2 \lhd_j 3t)$ and $\forall m > x_j^s (3s \lhd_j 3m + 2)$.
In this second case, we let $x_j^{s+1} = x_j^s+1$.

\medskip

This completes the construction of the linear orders $(L_j, \trianglelefteq_j)_{j < n}$.

We are left to prove that for $j < n$, $\varphi(j)$ holds if and only if the linear order $(L_j, \trianglelefteq_j)$ has a separator element.
For the forward direction, fix $j < n$ such that $\varphi(j)$ holds.
Let $\overline{x}$ be such that $\forall y \, \psi(j,\overline{x},y)$ holds.
By construction for each $s$ we stipulated that $3s  \lhd_j 3 \overline{x} +2$.
We conclude that $\overline{x}$ is a separator element for $L_j, I_j, F_j$.

Conversely, suppose that for $j < n$ $\neg \varphi(j)$ holds.
Since $\forall x \, \exists y \, \neg \psi(j,x,y)$ holds, by $\I \SI01$ we have that $\forall x \, \exists s \, (x = x^s_j)$.
Fix $z$ large enough so that $\exists y \le z \, \neg \psi(j,x,y)$ and $\exists s \le z \, (x = x_j^s)$.
Then $3 x + 2 \lhd_j 3 z$.
Since $x$ was arbitrary, there is no separator element for $L_j, I_j, F_j$.
\end{proof}

Lemma \ref{leastsepel} implies that assuming $\I \SI02$, Lemma \ref{separatorrca} can be applied uniformly an arbitrary finite number of times to produce a solution to $\mathsf{LS}^*$.
This is because, given a finite sequence of linear orders and of sets to be separated, $\I \SI02$ is able to recognize whether there is a separator element (which is the non computable information we need).
Once we know this, we can use the strategy of the proof of Lemma \ref{separatorrca} to produce a separator set for each linear order and to prove $\mathsf{LS}^*$.

Putting everything together, we obtain the following interesting result.

\begin{corollary}\label{dbpwkl}
$\WKL \vee \I \SI02 \vdash \mathsf{LS}^*$ and in particular $\WKL \vee \I \SI02 \vdash \DBp$.
\end{corollary}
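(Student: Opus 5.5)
The corollary splits into two separate derivations of $\mathsf{LS}^*$, one from $\WKL$ and one from $\I\SI02$. The conclusion about $\DBp$ then follows from Theorem \ref{sep-dbp}: $\mathsf{LS}^*$ implies $\DBpp$ over $\RCA$. From $\DBpp$ we get $\DBp$ directly, since any realization of $P\setminus\{x_0\}$ by $n$ linearizations yields a realization of $P$ by $n+1$ linearizations, which is exactly the $\RCA$-formalization of $\dim(P)\le\dim(P\setminus\{x_0\})+1$.

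\emph{Case $\WKL$.} Given $(L_j,I_j,F_j)_{j<k}$ with $I_j\lhd_j F_j$, we form the disjoint union $L=\bigsqcup_{j<k}L_j$, partially ordered by putting elements of different components incomparable. Let $I=\bigcup I_j$ and $F=\bigcup F_j$. For $i\in I$ and $f\in F$ we have $f\nprec i$: either they lie in different components, or they lie in the same $L_j$ and then $i\lhd_j f$. Item \ref{fritmarc} of Lemma \ref{fm14} gives a downward closed $B$ with $I\subseteq B$ and $B\cap F=\emptyset$. Each $B_j=B\cap L_j$ is then a separator set for $L_j,I_j,F_j$, and the sequence $(B_j)_{j<k}$ exists by $\Delta^0_1$ comprehension.

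\emph{Case $\I\SI02$.} First apply Lemma \ref{leastsepel} to obtain the finite set $X$ of indices $j<k$ for which a separator element exists. Next, the statement ``for every $j\in X$ there is a separator element $b_j$'' is $\Pi^0_1$ in $b_j$ relative to the data. We want to choose all the $b_j$ simultaneously, so we need a finite sequence $(b_j)_{j\in X}$. I would obtain it by $\SI02$ bounding, which is available in $\I\SI02$: there is a bound $N$ with $\forall j\in X\,\exists b<N$ such that $b$ is a separator element. Then $b_j$ can be taken as the least such $b$; alternatively, one can prove the existence of the code of the finite sequence by $\SI02$-induction on the length. This collection step is the main delicate point, though it is standard.

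With $X$ and $(b_j)_{j\in X}$ in hand, we define $B_j$ uniformly, following the proof of Lemma \ref{separatorrca}. For $j\in X$, set $B_j=\{x: x\lhd_j b_j\}$ if $b_j\in F_j$, and $B_j=\{x: x\trianglelefteq_j b_j\}$ otherwise. For $j\notin X$, set $B_j=\{x:\exists i\in I_j\,(x\trianglelefteq_j i)\}$; in this case membership is equivalent to $\forall f\in F_j\,(x\lhd_j f)$. That equivalence holds for all $j\notin X$ simultaneously, so a single $\Delta^0_1$ definition with parameters $X$ and $(b_j)$ yields the whole sequence $(B_j)_{j<k}$ by $\Delta^0_1$ comprehension. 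Checking that each $B_j$ is a separator set is exactly as in Lemma \ref{separatorrca}.
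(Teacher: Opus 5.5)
Your proof is correct and follows the paper's route. You get $\WKL$ via Lemma \ref{fm14}, where your disjoint-union argument spells out the paper's ``simple application''; you get $\I\SI02$ via Lemma \ref{leastsepel} plus a uniform version of the case split in Lemma \ref{separatorrca}; and you get $\DBp$ via Theorem \ref{sep-dbp}. The paper leaves implicit how all the separator elements $(b_j)_{j\in X}$ are chosen at once, and you handle this correctly with $\B\SI02$ (or $\SI02$-induction on the length of the sequence).
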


Since $\WKL$ and $\I \SI02$ are incomparable, $\mathsf{LS}^*$ implies neither $\I \SI02$ nor $\WKL$.
Results equivalent to this disjunction do exist, but are rare in the literature.
The only examples known to the authors are in \cite{friedmansimpsonyu, belanger}.
In \cite{simpsonyokoyama} a weakening of weak König's lemma is shown to lie strictly between $\RCA$ and $\WKL \vee \I \SI02$.

Studying reversals for $\DBp$ is rather difficult because, as we already noticed, the statement does not presuppose a relation between the realization of $(P \setminus \{x_0\}, \preceq)$ and that of $(P, \preceq)$.
Moreover, it is not easy to code information in the position of the single point $x_0$.
However, in view of Theorem \ref{sep-dbp}, it is reasonable to study reversals for $\mathsf{LS}^*$ instead of $\DBp$.
Following ideas from \cite{simpsonyokoyama, patey2025}, we deal with a special case of $\mathsf{LS}^*$.
The notion of instance-solution problem is widely used in reverse mathematics.
These are typically sentences of the form
$$\forall X \, (\varphi(X) \rightarrow \exists Y \psi (X,Y))$$
where $X$ and $Y$ are second order variables which represent respectively the instances and the solution to the problem, and $\varphi$ and $\psi$ are arithmetical formulas.
$\mathsf{LS}^*$ can be formulated in this way.
An instance-solution problem $\mathsf{P}$ is computably true if every $\mathsf{P}$-instance $X$ admits a $\Delta_1(X)$-definable solution.
The proof of $\mathsf{LS}$ within $\I\SI{0}{2}$, shows that $\I\SI{0}{2}$ proves that $\mathsf{LS}^*$ is computably true.

\begin{theorem}\label{lowerbound}
Every model of $\RCA+\neg \B \SI02$ contains an instance $Z$ of $\mathsf{LS}^*$ with no $\Delta_1(Z)$-definable solution.  
\end{theorem}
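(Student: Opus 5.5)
We work inside a fixed model $\mathcal M \models \RCA + \neg\B\SI02$ and exploit the standard consequence of the failure of $\B\SI02$: there are a parameter $A$ in $\mathcal M$, a number $a$ and a $\Delta^0_2(A)$ (indeed $\SI02(A)$-graph) cofinal map on $a$. Concretely, there is an $A$-computable function $h(j,s)$ defined for $j<a$ such that each limit $c_j=\lim_s h(j,s)$ exists but $\{c_j : j<a\}$ is unbounded in $\mathcal M$. Equivalently, there is a $\PI01(A)$ (or $\SI02$) formula witnessing a cofinal partition of $\N$ into $a$ pieces. The instance $Z$ will be a finite sequence $(L_j,I_j,F_j)_{j<a}$ of length $a$ (a nonstandard number if needed), computable uniformly from $A$, so $Z\equiv_T A$ up to a harmless join.

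The plan is to make, for each $j<a$, the linear order $L_j$ encode the $\Delta^0_2$ behaviour of $c_j$ in the style of the proof of Lemma \ref{leastsepel}: $I_j$ increasing in multiples of $3$, $F_j$ decreasing, and candidate separator elements $3m+2$. The coding should ensure that $L_j$ has a separator element, and that every separator set $B_j$ for $L_j,I_j,F_j$ reveals $c_j$: from $B_j$ one can compute (uniformly in $j$) a bound for, or the exact value of, $c_j$. For instance, we keep the element $3h(j,s)+2$ between $I_j$ and $F_j$ while $h(j,\cdot)$ looks stable and push it below later elements of $I_j$ when $h(j,s)$ changes, so that the unique surviving gap element is $3c_j+2$. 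A separator set then determines it: $3c_j+2$ is the least $3m+2$ lying outside $B_j$ but above all of $I_j$, or equivalently $c_j$ is found by searching for $m$ with $3m+2\in B_j$ iff $3m+2$ is placed in a specific way relative to $I_j$. The computation should be arranged so that knowing $B_j$ lets us find $c_j$ by a search that halts, with $B_j$ as an oracle.

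Suppose now that $Y=(B_j)_{j<a}$ is a $\Delta_1(Z)$-definable solution. Then $Y$ exists in $\mathcal M$ (being $\Delta^0_1$ in $Z$) and the map $j\mapsto c_j$ becomes $\Delta^0_1(Y\oplus Z)$ on the bounded domain $a$. By $\B\SI01$, which holds in $\RCA$, a $\Delta^0_1$ function on a bounded domain is bounded, so the $c_j$ would be bounded. This contradicts the cofinality of $\{c_j : j<a\}$. Hence no such solution exists.

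The main obstacle is the coding step. We must make each separator set, and not merely the existence of a separator element, computably yield $c_j$ by a halting search. In addition, the construction has to work using only $\I\SI01$, since mind changes of $h(j,\cdot)$ may be unboundedly many over $j<a$ in the nonstandard sense. I would first try the precise ordering from Lemma \ref{leastsepel}, with the witness $x_j^s$ replaced by $h(j,s)$. I would also add a marker so that $B_j$ excludes exactly the elements above $3c_j+2$. Then $c_j$ is recovered as the $m$ such that $3m+2\in B_j$ (or its boundary) relative to the $F_j$-tail, and we verify that this search is total in $\mathcal M$.
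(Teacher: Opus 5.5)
There is a genuine gap. The coding step you flag as the main obstacle is not merely unfinished; it is impossible. Your last paragraph never uses that $Y$ is $\Delta_1(Z)$: for any set $Y$ of the model, a total $\Delta^0_1(Y\oplus Z)$ function on $[0,a)$ is bounded by $\B\SI01$. So if every separator set $B_j$ computed $c_j$ (or a bound for it) uniformly in $j<a$, the instance $Z$ would have no solution in $\mathcal M$ at all. You would then have proved $\RCA+\mathsf{LS}^*\vdash\B\SI02$, which is false. Indeed $\WKL\vdash\mathsf{LS}^*$ (apply Lemma \ref{fm14} to the disjoint union of the $L_j$). Yet $\WKL+\neg\B\SI02$ is consistent: by Harrington's conservation theorem, every countable model of $\I\Sigma_1+\neg\B\Sigma_2$ expands to a model of $\WKL$ with the same first-order part. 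The orders you propose show the problem directly. As long as all of $Z_j$ lies below $F_j$, the set $I_j\cup Z_j$ is a separator set computable uniformly in $j$, so no information about $c_j$ is forced into the solutions (compare Lemma \ref{separatorrca}).

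The missing idea is to diagonalize against the $\Delta_1(Z)$ candidates rather than to code. This is why the theorem is stated only for $\Delta_1(Z)$-definable solutions. Take $\neg\B\SI02$ in the form of a function $f\colon\N\to k$ all of whose fibers are finite. Assign the requirement that $\Phi^X_e$ is not a separator set to the order $L_{f(e)}$. On $L_j$ keep a current point $x_j$ in the gap. When some not-yet-satisfied $e\in f^{-1}(j)$ has $\Phi^X_{e,s}(x_j,j)$ convergent, add a new element of $I_j$ just above $x_j$ if the value is $0$, or a new element of $F_j$ just below $x_j$ if the value is $1$. Then move $x_j$ to a fresh point on the other side of the new element. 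Now $\Phi^X_e$ is wrong either on the new element or on downward closure. Each requirement acts at most once. If $x_j$ moved unboundedly often, you would get an injection of $\N$ into the finite set $f^{-1}(j)$. Hence $x_j$ stabilizes, and every total $\Phi^X_e$ with $f(e)=j$ has been defeated. The finite fibers are essential. A single order facing infinitely many requirements could be left without a separator element, and then Lemma \ref{separatorrca} would hand you a $\Delta_1(Z)$ separator.
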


\begin{proof}
Let $(M,S) \vDash \RCA+\neg \B \SI02$.
By the failure of $\B \SI02$ there exists $k \in M$ and $f \in S$ with $f \colon M \rightarrow k$ such that for each $i < k$, $f^{-1} (i)$ is finite.
Let $X \in S$ be such that $f$ is $\Delta_1(X)$ and let $(\Phi^X_e)_{e \in M}$ be an enumeration of the $X$-computable functions of the model (regarded as $\{0,1\}$-valued for convenience).
We write $\Phi^X_{e,s}$ to denote $\Phi^X_e$ after $s$ stages of the computation.

We construct by stages a $\Delta_1(X)$-instance $Z = (L_j, I_j, F_j)_{j < k}$ of $\mathsf{LS}^*$ such that for each $e \in M$, if $f(e) = j$ then $\Phi^X_e$ is not a separator set for $L_j, I_j, F_j$.
The support of each $L_j$ is $M$.
The construction keeps parameters $\delta^s_e$ for $e,s \in M$ and $x^s_j$ for $j < k$ and $s \in M$: here $s$ highlights that we are talking about the value of the parameter at the beginning of stage $s$.
Analogously, we denote by $L_j^s, I_j^s, F_j^s$ the finite approximations of $L_j, I_j, F_j$ at the beginning of stage $s$.
We start by setting $\delta^0_e = 0$, $x^0_j = 0$, $L_j^0 = \{x^0_j\}$ and $I_j^0 = F_j^0 = \emptyset$.
At each stage we add finitely many elements to $L_j$, define the comparabilities involving the new elements and decide which of them belong to $I_j$ or $F_j$.

For each $e \in M$ we want to satisfy the requirement $\mathsf{R}_e$ consisting of the disjunction of the following clauses where $f(e) = j$:
\begin{enumerate}[leftmargin=*,label=(\arabic*)]
    \item \label{p1} $\Phi^X_e(x,j) \uparrow$ for some $x \in M$;
    \item \label{p2} $\Phi^X_e(x,j) \downarrow = 0$ for some $x \in I_j$;
    \item \label{p3} $\Phi^X_e(x,j) \downarrow = 1$ for some $x \in F_j$;
    \item \label{p4} $\Phi^X_e(x,j) \downarrow = 1$ and $\Phi^X_e(y,j) \downarrow = 0$ for some $x,y \in L_j$ with $y \lhd_j x$ (here $\lhd_j$ denotes the order relation on $L_j$).
\end{enumerate}
Notice that if $\mathsf{R}_e$ is satisfied, then $\Phi^X_e$ is not the characteristic function of a separator set for $L_j,I_j,F_j$.
We set $\delta^s_e = 1$ if at stage $s$ we are sure that $\mathsf{R}_e$ is satisfied, no matter what happens at later stages. 

\smallskip

\textit{Stage $s$}. 
For every $j < k$ check if for some $e \in f^{-1}(j) \cap [0,s)$, $\Phi^X_{e,s} (x^s_j, j) \downarrow$ and $\delta^s_e = 0$.
Let $z^s_j$ and $w^s_j$ be the least elements of $M$ not in $L^s_j$.

If no such $e$ is found, set $x_j^{s+1} = x_j^s$, $L_j^{s+1} = L_j^s \cup \{z^s_j, w^s_j\}$, $I_j^{s+1} = I_j^s \cup \{z^s_j\}$, $F_j^{s+1} = F_j^s \cup \{w^s_j\}$ and put $z^s_j$ below every element of $L^s_j$ and $w^s_j$ above every element of $L^s_j$.

Otherwise let $e$ be least such that $\Phi^X_{e,s} (x^s_j, j) \downarrow$ and $\mathsf{R}_e$ is not yet satisfied.
The existence of a minimal $e$ is ensured by $\I \SI00$.
We say that $\mathsf{R}_e$ acts at this stage and we set $\delta^{s+1}_e=1$.

If $\Phi^X_{e,s} (x^s_j, j) \downarrow = 0$, put $x_j^s \lhd_j z^s_j \lhd_j w^s_j$.
We also stipulate that $z^s_j$ and $w^s_j$ are below all the elements of $L^s_j$ that were above $x^s_j$.
Then we set $x^{s+1}_j = w^s_j$, $L_j^{s+1} = L_j^s \cup \{z^s_j, w^s_j\}$, $I_j^{s+1} = I_j^s \cup \{z^s_j\}$, $F_j^{s+1} = F_j^s$.

If $\Phi^X_{e,s} (x^s_j, j) \downarrow = 1$, put $w_j^s \lhd_j z^s_j \lhd_j x^s_j$.
We also stipulate that $z^s_j$ and $w^s_j$ are above all the elements of $L^s_j$ that were below $x^s_j$.
Then we set $x^{s+1}_j = w^s_j$, $L_j^{s+1} = L_j^s \cup \{z^s_j, w^s_j\}$, $I_j^{s+1} = I_j^s$, $F_j^{s+1} = F_j^s \cup \{z^s_j\}$.

When we finish all the $j < k$, for each $e$ such that $\delta_e^{s+1}$ is still undefined, we set $\delta_e^{s+1} = \delta^s_e$ and we move to the next stage.
This completes the construction.

\smallskip

We need to prove that each requirement $\mathsf{R}_e$ is satisfied.
First notice that $\delta_e^{s+1} \neq \delta_e^s$ for at most one $s$ (the only stage when $\mathsf{R}_e$ act): when such $s$ exists we denote it by $s_e$.

We claim that for each $j < k$ there exists a stage $s$ and $x \in M$ such that for each stage $t > s$, $x^t_j = x^s_j$.
Suppose that this is not the case and let $j$ be such that for every stage $s$ and every $x$, there is a stage $t > s$ with $x^t_j \ne x^s_j$.
Fix a generic $s$.
Then by $\I \SI00$ there exists a least stage $t > s$ such that $x^s_j \ne x^t_j$ and we call $t_s$ this stage.
By minimality, $x^{t_s-1}_j = x^s_j$ and some requirement $\mathsf{R}_e$ for $e \in f^{-1}(j) \cap [0,t_s)$ has to act, otherwise $x^{t_s}_j = x^{t_s-1}_j$.
By construction, only one requirement acts and we call $e_s$ its index.
Moreover, each requirement acts at most once during the construction (exactly when $\delta_e^{s+1}$ becomes $1$) and so for each $s' < s$, $e_{s'} \ne e_s$.
It follows that the function $s \to e_s$ is an $X$ computable injection from $M$ to $f^{-1}(j)$ which contradicts the assumption that $f^{-1}(j)$ is finite and the claim is proved.

Now we show that each requirement $\mathsf{R}_e$ is satisfied.
If $\Phi^X_e$ does not converge on some input $(x, f(e))$, we meet clause \ref{p1}.
So suppose that $\Phi^X_e$ converges on all such inputs.
To ease the notation, let $f(e) = j$.
By the claim, there exists a stage $s$ such that for all $t \ge s$, $x^s_j = x^t_j$.
Since $\Phi^X_e (x^s_j,j) \downarrow$, if $\delta^s_e = 0$ then for some $t \ge s$ either $\mathsf{R}_e$ or some higher priority requirement must act at stage $t$.
It follows that $x^{t+1}_j$ is updated to a new value, which contradicts the claim that it already stabilized at stage $s$.
It follows that $\delta^s_e = 1$, which means that $\mathsf{R}_e$ already acted at a previous stage, say $u_e$.
This means that $\Phi^X_{e,u_e}(x^{u_e}_j,j) \downarrow$ and $e$ is the least, so it is the one to act.
If $\Phi^X_{e,u_e}(x^{u_e}_j,j) \downarrow = 0$ then we stipulated $x^{u_e}_j \lhd_j z^{u_e}_j$ and we put $z^{u_e}_j \in I_j$.
In this case, either $\Phi^X_e(z^{u_e}_j,j) \downarrow = 0$ and meet clause \ref{p2}, or $\Phi^X_e(z^{u_e}_j,j) \downarrow = 1$ and we meet clause \ref{p4}.
The proof in the case $\Phi^X_{e,u_e}(x^{u_e}_j,j) \downarrow = 1$ is analogous: if $\Phi^X_e(z^{u_e}_j,j) \downarrow = 1$ we meet clause \ref{p3}, while if $\Phi^X_e(z^{u_e}_j,j) \downarrow = 0$ we meet clause \ref{p4}.

We conclude that the $X$-computable instance $(L_j, I_j, F_j)_{j < k}$ of $\mathsf{LS}^*$ has no $\Delta_1(X)$-solution, because for each index $e$, the $X$-computable function $\Phi^X_e$ fails to be a separator set for $L_{f(e)}, I_{f(e)}, F_{f(e)}$.
\end{proof}

We can strengthen Theorem \ref{lowerbound} by refining its proof and obtain that the failure of $\I\SI{0}{2}$ suffices for its conclusion.

\begin{theorem}\label{lowerbound2}
Every model of $\RCA+\neg \I \SI02$ contains an instance $Z$ of $\mathsf{LS}^*$ with no $\Delta_1 (Z)$-definable solution. 
\end{theorem}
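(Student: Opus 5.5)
By Theorem \ref{lowerbound} we may assume $\B\SI02$ holds; otherwise we are done. So we work in a model $(M,S)\vDash \RCA+\B\SI02+\neg\I\SI02$ and look for a failure of induction with a usable combinatorial form. The plan is to take the usual characterization that, over $\B\SI02$, the failure of $\I\SI02$ is equivalent to the failure of bounded $\SI02$ comprehension (Theorem \ref{isigma}). Equivalently, the least number principle for $\PI02$ formulas fails, and there is a $\SI02$-definable cut. Concretely, we get $X\in S$, $k\in M$ and a $\Delta_1(X)$ sequence of approximations giving a $\SI02(X)$ set $D=\{i<k:\varphi(i)\}$ that is not in $S$. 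Moreover, the indices below $k$ can be arranged so that $D$ is an initial segment of $k$ that is proper and has no maximum: a cut $I\subsetneq k$, closed downward and under successor, which is $\SI02(X)$.

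We then run the construction of Theorem \ref{lowerbound} with $f$ replaced by a different assignment of requirements to linear orders. Enumerate the $X$-computable functions $\Phi^X_e$ and assign to each $e$ a component $j<k$ of the instance $(L_j,I_j,F_j)_{j<k}$, so that every $\Phi^X_e$ is attacked on some component $j$ for which only a bounded (finite in $M$) number of requirements ever act. In Theorem \ref{lowerbound} this finiteness came from $f^{-1}(j)$ being finite. Here we instead use a priority ordering on pairs $(e,j)$ guided by the $\SI02$ approximations to the cut. Requirement $\mathsf{R}_{e}$ attacks $\Phi^X_e$ on every component $j$ with $j$ larger than the current approximation to the position of $e$ relative to the cut. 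Each $\mathsf{R}_e$ acts at most once per component, and requirements are injured only finitely often. The key claim, mirroring the stabilization claim for the markers $x^s_j$, is that for each $j$ the marker $x^s_j$ eventually stabilizes. This claim is where the argument changes. We would argue that if $x_j$ moved unboundedly often, then the map sending stages of change to acting requirements would give an $X$-computable injection from $M$ into a set of indices that is bounded. A boundedness of this kind is exactly what $\B\SI02$ provides, since the set of acting indices on component $j$ is controlled by the $\SI02$ cut.

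The verification then proceeds exactly as before. For each $e$, either $\Phi^X_e$ is partial on the relevant component, or once $x_j$ stabilizes $\mathsf{R}_e$ has already acted. Its action places $z_j$ into $I_j$ or $F_j$ just above or below $x_j$, contradicting one of clauses (2)--(4). So no $\Delta_1(X)$ set solves the instance, and we take $Z=X\oplus$ (the instance).

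The main obstacle is the stabilization step. We need to find the right assignment of requirements to components, replacing $f$, so that on each component only an $M$-finite set of requirements can act, while every $e$ is still handled on some component. Without $\neg\B\SI02$ we have no function with finite fibers. The first attempt would be to derive one relative to the cut: use the $\PI02$ least number failure to define, via the $\Delta_1(X)$ approximations, a function $g\colon M\to k$ whose fibers are eventually bounded. The fibers need not be finite; it suffices that each fiber's set of \emph{acting} elements is finite, because cofinitely many of them are permanently satisfied via divergence bookkeeping. If this proves too delicate, the fallback is to use the standard fact that $\B\SI02+\neg\I\SI02$ yields a $\Delta_2$-definable bijection between a proper cut and $M$. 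Composing it with the construction gives the required finite-fiber behaviour within the cut.
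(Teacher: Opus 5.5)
\textbf{There is a genuine gap.} The whole proof depends on replacing the finite-fibre function $f$ of Theorem~\ref{lowerbound} by an assignment of requirements to components with two properties: every $e$ is handled on some component, and each such component receives only an $M$-finite set of requirements. You name this as the ``main obstacle'' but never resolve it.

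Three parts of the proposal fail as written:
\begin{itemize}
\item The main-line construction, where $\mathsf{R}_e$ attacks every $j$ above ``the current approximation to the position of $e$ relative to the cut'', is undefined, because you have no map from $M$ into the cut.
\item Your justification for stabilization has it backwards. $\B\SI02$ is exactly what rules out functions $M\to k$ with finite fibres; the bounded fibres have to come from $\neg\I\SI02$.
\item Your ``first attempt'' cannot work. In the construction a requirement acts as soon as $\Phi^X_{e,s}$ converges on the current marker $x^s_j$, whether or not $\Phi^X_e$ is total. Divergence elsewhere therefore does not limit how many requirements act on a component. With an unbounded fibre, the $X$-computable injection from stages to acting indices is no contradiction.
\end{itemize}

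The missing ingredient is what your fallback gestures at, stated correctly. Over $\B\SI02+\neg\I\SI02$ there are $X$, a proper $\Sigma^0_2(X)$ cut $I$ bounded by some $k$, and a $\Sigma^0_2(X)$ strictly increasing cofinal map $f\colon I\to M$. The paper cites \cite{chongwong2025}; you can also derive it directly:
\begin{itemize}
\item The least $\Pi^0_1$ witnesses for the $\Sigma^0_2$ definition of $I$ are unbounded on $I$. Otherwise, by $\B\SI01$, $I$ would be $\Pi^0_1(X)$-definable, contradicting $\I\SI01$.
\item Running maxima of these witnesses exist by $\B\SI02$, which makes the map strictly increasing.
\end{itemize}
A bijection between a cut and $M$ is neither the standard fact nor needed. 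Component $j$ then handles the $M$-finite interval $[f(j-1),f(j))$.

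Because $f$ is only limit-computable, you also need the bookkeeping you leave out:
\begin{itemize}
\item use $\Delta_1(X)$ approximations $f_s$ that are monotone in both arguments;
\item at any stage where the approximate interval for $j$ changes, freeze $L_j$ and reset $\delta_e=0$ for $e$ in the new interval.
\end{itemize}
For $j\in I$ the interval eventually stabilizes. From then on the construction coincides with that of Theorem~\ref{lowerbound}, and the injection argument, now into a bounded interval, gives stabilization of $x_j$. Components $j\notin I$ may misbehave, which is harmless because every $e$ lies in the interval of some $j\in I$. Without these pieces your stabilization claim, and hence the verification, has no support.
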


\begin{proof}
Let $(M,S) \vDash \RCA+\neg \I \SI02$.
We may also assume $(M,S) \vDash \B \SI02$, otherwise Theorem \ref{lowerbound} immediately yields the thesis.
All these hypothesis imply, by \cite[Subsection 2.2]{chongwong2025}, that for some $X \in S$ there are a $\Sigma^0_2(X)$-definable proper cut $I \subset M$ bounded by some $k \in M$ and a $\Sigma^0_2(X)$-definable strictly increasing cofinal function $f \colon I \rightarrow M$.
Let $(f_s)_{s \in M}$ be a $\Delta^0_1(X)$-sequence of functions such that for each $s \in M$ the map $f_s \colon k \rightarrow M$ is non decreasing, for each $x < k$ the map $s \mapsto f_s(x)$ is non decreasing, and for $x \in I$ $\lim_{s \rightarrow \infty} f_s(x) = f(x)$.

Let $(\Phi^X_e)_{e \in M}$ be an enumeration of the $X$-computable functions of the model (regarded as $\{0,1\}$-valued for convenience).
We write $\Phi^X_{e,s}$ to denote $\Phi^X_e$ after $s$ stages of the computation.

We construct by stages a $\Delta_1(X)$-instance $Z = (L_j, I_j, F_j)_{j < k}$ of $\mathsf{LS}^*$ such that for each $e \in M$, if $j \in I$ is such that $f(j-1) \leq e < f(j)$ (assuming $f(-1) = 0$) then $\{x :\Phi^X_e(x,j)=1\}$ is not a separator set for $L_j, I_j, F_j$.
Since $f$ is cofinal in $M$, the disjoint intervals $[f(j-1), f(j))$ cover all $M$.

The support of each $L_j$ is $M$ and the construction keeps parameters $\delta^s_e$ for $e,s \in M$ and $x^s_j$ for $j < k$ and $s \in M$ as in the proof of Theorem \ref{lowerbound}.
Analogously, we denote by $L_j^s, I_j^s, F_j^s$ the finite approximations of $L_j, I_j, F_j$ at the beginning of stage $s$.
We start by setting $\delta^0_e = 0$, $x^0_j = 0$, $L_j^0 = \{x^0_j\}$ and $I_j^0 = F_j^0 = \emptyset$. 
At each stage we add finitely many elements to $L_j$, define the comparabilities involving the new elements and decide which of them belong to $I_j$ or $F_j$.

For each $e \in M$ we want to satisfy the same requirement $\mathsf{R}_e$ as in the proof of Theorem \ref{lowerbound} consisting of the disjunction of the following clauses where $e \in [f(j-1), f(j))$:
\begin{enumerate}[leftmargin=*,label=(\arabic*)]
    \item \label{p11} $\Phi^X_e(x,j) \uparrow$ for some $x \in M$;
    \item \label{p22} $\Phi^X_e(x,j) \downarrow = 0$ for some $x \in I_j$;
    \item \label{p33} $\Phi^X_e(x,j) \downarrow = 1$ for some $x \in F_j$;
    \item \label{p44} $\Phi^X_e(x,j) \downarrow = 1$ and $\Phi^X_e(y,j) \downarrow = 0$ for some $x,y \in L_j$ with $y \lhd_j x$ (here $\lhd_j$ denotes the order relation on $L_j$).
\end{enumerate}
Notice that if $\mathsf{R}_e$ is satisfied, then $\Phi^X_e$ is not the characteristic function of a separator set for $L_j,I_j,F_j$.
We set $\delta^s_e = 1$ if at stage $s$ if the requirement $\mathsf{R}_e$ appears to be satisfied.
Since we do not have access to the function $f$, we use its $\Delta^0_1(X)$-approximation $(f_s)_{s \in M}$ and work with the intervals $[f_s(j-1),f_s(j))$.
As these intervals change during the construction we may have to set back $\delta^t_e = 1$ at some later stage $t>s$, unlike in the proof of Theorem \ref{lowerbound}.

\smallskip

\textit{Stage $s$}. 
For every $j < k$ we first check if $f_{s}(j-1) = f_{s+1}(j-1)$ and $f_{s}(j) = f_{s+1}(j)$.
If this is the case, we check if for some $e \in [f_{s+1}(j-1),f_{s+1}(j))$, $\Phi^X_{e,s} (x^s_j, j) \downarrow$ and $\delta^s_e = 0$.
Let $z^s_j$ and $w^s_j$ be the least elements of $M$ not in $L^s_j$.
Then we proceed exactly as in the stage by stage construction of the proof of Theorem \ref{lowerbound}.
Notice that we are also setting values for $x^{s+1}_j$ and for some of the $\delta^{s+1}_e$.

Otherwise the intervals $[f_s(j-1), f_s(j))$ and $[f_{s+1}(j-1),f_{s+1}(j))$ are different.
In this case, we let $L_j^{s+1} = L_j^s$ (and hence also $I_j^s$ and $F_j^s$ are unchanged) and $x_j^{s+1} = x_j^s$. 
Moreover, for each $e \in [f_{s+1}(j-1),f_{s+1}(j))$ we set $\delta^{s+1}_e = 0$. 

After having dealt with every $j < k$, for each $e$ such that $\delta_e^{s+1}$ is still undefined, we set $\delta_e^{s+1} = \delta^s_e$ and we move to the next stage.
This completes the construction.

\smallskip

Since the sequence $(f_s)_{s \in M}$ approximates the function $f$, we know that for each $j \in I$ there exists a stage $t_j$ such that for all $t' > t_j$, $f_{t'} (j) = f(j)$.
This means that the interval $[f_s(j-1), f_s(j))$ eventually stabilizes.
Once this happens, the linear order $L_j$ works toward requirement $\mathsf{R}_e$ for each $e \in [f(j-1), f(j))$ and, since this interval never changes anymore, the stage by stage construction coincides with that in the proof of Theorem \ref{lowerbound}.
The proof that all the requirements are satisfied is exactly the same.

We conclude that the $X$-computable instance $(L_j, I_j, F_j)_{j < k}$ of $\mathsf{LS}^*$ has no $\Delta_1(X)$-solution, because for each index $e$, the $X$-computable function $\Phi^X_e$ fails to be a separator set for some $L_j, I_j, F_j$ with $j \in I$.
\end{proof}

\begin{corollary}[$\RCA$]\label{reversemathresult}
\lq\lq $\mathsf{LS}^*$ is computably true\rq\rq\ is equivalent to $\I \SI02$.
In particular $\RCA + \B\SI02 \nvdash \mathsf{LS}^*$ and consequently also $\RCA + \B\SI02 \nvdash \DBpp$.
\end{corollary}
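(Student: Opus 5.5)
The plan is to prove the two directions of the equivalence separately, reading ``$\mathsf{LS}^*$ is computably true'' as the scheme asserting that every instance $Z$ of $\mathsf{LS}^*$ has a $\Delta_1(Z)$-definable solution. The \lq\lq in particular\rq\rq\ clauses then follow from Theorem \ref{sep-dbp} and a model of $\RCA + \B\SI02 + \neg\I\SI02$.

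\emph{From $\I\SI02$ to computable truth.} Fix an instance $Z = (L_j, I_j, F_j)_{j<k}$. By Lemma \ref{leastsepel}, the set $X = \{j<k : \exists b \in L_j\,\forall i\in I_j\,\forall f\in F_j\,(i \trianglelefteq_j b \trianglelefteq_j f)\}$ exists. It is a finite set, so it is coded by a number. For each $j\in X$, a separator element $b_j$ exists. By bounded collection (available under $\I\SI02$), together with the fact that \lq\lq $b$ is a separator element for $j$\rq\rq\ is $\Pi^0_1(Z)$, we can bound the $b_j$'s. We then fix a finite sequence $(b_j)_{j\in X}$; obtaining it is a bounded $\SI02$-comprehension/least-number step under $\I\SI02$.

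With this finite parameter in hand, we define $B_j$ uniformly as in the proof of Lemma \ref{separatorrca}:
\begin{itemize}
\item for $j\in X$, take $B_j = \{x : x\lhd_j b_j\}$ or $\{x : x\trianglelefteq_j b_j\}$, according to whether $b_j\in F_j$;
\item for $j\notin X$, take $B_j = \{x : \exists i\in I_j\,(x\trianglelefteq_j i)\}$, which equals $\{x:\forall f\in F_j\,(x\lhd_j f)\}$ and is therefore $\Delta_1(Z)$.
\end{itemize}
The whole sequence $(B_j)_{j<k}$ is thus $\Delta_1(Z)$-definable, with the finite parameters $X$ and $(b_j)$ as number parameters.

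\emph{From computable truth to $\I\SI02$.} We argue by contraposition inside an arbitrary model. If $(M,S)\vDash\RCA+\neg\I\SI02$, then Theorem \ref{lowerbound2} produces an instance $Z\in S$ of $\mathsf{LS}^*$ with no $\Delta_1(Z)$-definable solution, so \lq\lq $\mathsf{LS}^*$ is computably true\rq\rq\ fails there. Hence the statement implies $\I\SI02$ over $\RCA$.

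\emph{The \lq\lq in particular\rq\rq\ part.} Take a model of $\RCA+\B\SI02+\neg\I\SI02$ whose second-order part is exactly $\Delta_1$-definable sets; for instance, take $\mathrm{REC}$ over a suitable nonstandard first-order model of $\B\Sigma_2+\neg\I\Sigma_2$, which exists by standard results. Then $\B\SI02$ holds there, and Theorem \ref{lowerbound2} gives an instance of $\mathsf{LS}^*$ in the model whose solutions would have to be $\Delta_1$-definable, hence no solution lies in the model. So $\mathsf{LS}^*$ fails in this model, and by Theorem \ref{sep-dbp} so does $\DBpp$.

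The main obstacle I expect is the first direction: making sure that the choice of the separator elements $b_j$ for $j\in X$ really is available under $\I\SI02$ as a finite object. I would handle it with the $\Pi^0_1$ least-number principle applied to the coded tuple $\langle b_j\rangle_{j\in X}$.
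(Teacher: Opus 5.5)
Your proposal is correct and follows the paper's proof: the forward direction uses Lemma \ref{leastsepel} together with the case split of Lemma \ref{separatorrca}, the converse uses Theorem \ref{lowerbound2}, and the independence uses a model $(M,S)$ where $S$ is the family of $\Delta_1$-definable sets over $M\models\B\Sigma_2+\neg\I\Sigma_2$, combined with Theorem \ref{sep-dbp}. Your choice of the separator elements $b_j$ (bound them via $\B\SI02$ applied to the $\Pi^0_1$ property, then extract the finite tuple by bounded comprehension) spells out what the paper leaves implicit in \lq\lq yields immediately\rq\rq; the closing appeal to the $\Pi^0_1$ least-number principle is redundant, since it presupposes that such a tuple exists, which your collection argument already provides.
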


\begin{proof}
Lemma \ref{leastsepel} yields immediately that $\I \SI02$ proves that $\mathsf{LS}^*$ is computably true.
Conversely, by the relativized version of Theorem \ref{lowerbound2}, every model of $\neg \I \SI02$ has an instance $Z$ of $\mathsf{LS}^*$ with no $\Delta_1 (Z)$-solutions.
This witnesses that \lq\lq $\mathsf{LS}^*$ is computably true\rq\rq\ does not hold in the model.

For the second part, let $M$ be a first order model of $\B\Sigma_2 + \neg \I \Sigma_2$.
If we take $S$ the set of $\Delta_1$-definable subsets of $M$, then $(M,S) \vDash \RCA + \B\SI02 + \neg \I \SI02$ and by Theorem \ref{lowerbound2} in $(M,S)$ there is a computable instance $Z$ of $\mathsf{LS}^*$ with no $\Delta_1$-solutions.
We conclude that in $(M,S)$ there is no solution for $Z$.
\end{proof}

Corollary \ref{reversemathresult} implies that $\mathsf{LS}$ has no uniform proof in $\RCA + \B\SI02$, as otherwise $\mathsf{LS}^*$ would be provable in $\RCA + \B\SI02$.

\appendix
\section{A result in Weihrauch reducibility}

The origins of Weihrauch reducibility go back to the late 1980s.
Later, it was noticed in \cite{GHERARDI200885}, and independently in \cite{dorais2016uniform}, that this notion can also be a fruitful way of comparing problems that arise in the context of reverse mathematics.
A recent survey is \cite{BGP17}.

The statement $\mathsf{LS}$ can be easily viewed as an instance-solution problem.
An instance consists of a linear order and two subsets of its domain such that all elements of the first set are below all elements of the second set.
A solution of this instance is a separator set for this triple.
We still use $\mathsf{LS}$ to denote this problem.

The problem $\mathsf{XC}_1$ is the restriction of the choice function $\mathsf{C}_{[0,1]}$ to convex subsets of $[0,1]$ (i.e.\ intervals): an instance is a nonempty convex closed subset $A$ of $[0,1]$, represented by open intervals with rational endpoints which cover its complement, and the solution is any point of $A$. 
The Weihrauch degree of $\mathsf{XC}_1$ was studied in \cite{3,2}.

\begin{proposition}\label{LSWei}
$\mathsf{XC}_1 \equiv_{\mathrm{W}} \mathsf{LS}$.
\end{proposition}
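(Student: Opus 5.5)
We prove the two reductions $\mathsf{XC}_1 \le_{\mathrm W} \mathsf{LS}$ and $\mathsf{LS} \le_{\mathrm W} \mathsf{XC}_1$ separately. The guiding idea is that both problems ask for a \emph{cut}. For $\mathsf{XC}_1$ the instance interval $A=[a,b]$ is given only through approximations: the covering open intervals tell us, in a computably enumerable way, rationals lying below $a$ and rationals lying above $b$. For $\mathsf{LS}$ the sets $I$ and $F$ play the same role, and a separator set is essentially a Dedekind cut lying between them.

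\textbf{$\mathsf{XC}_1 \le_{\mathrm W} \mathsf{LS}$.}
\begin{enumerate}
\item Take the linear order $L=\mathbb{Q}\cap[0,1]$ with its usual order.
\item Enumerate the open rational intervals covering $[0,1]\setminus A$. Since $A$ is a nonempty interval, each component of the complement is either below $A$ or above $A$. When an interval $(p,q)$ is listed with $q\le$ some point we can recognise as left of $A$, we put its rationals in $I$; symmetrically we put rationals into $F$.
\item To avoid having to decide which side an interval lies on, we will instead use the fact that $0\notin A$ or $1\notin A$ is only detectable in the limit. It is simpler to pass through an intermediate characterisation. A rational $r$ is certified below $A$ once finitely many listed intervals cover $[0,r]$; this is a $\Sigma^0_1$ condition by compactness. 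Symmetrically, $r$ is certified above $A$ once finitely many listed intervals cover $[r,1]$.
\item Let $I$ and $F$ be these c.e.\ sets. To make them sets rather than merely c.e.\ sets, we use the standard padding trick: we index the elements of $L$ by pairs $(r,s)$, so that membership is decided at stage $s$. Since $A\neq\emptyset$, we have $I\lhd F$.
\item Given a separator set $B$, we output $x=\sup B$ (in the padded sense). This supremum is computable from $B$ as a real, because we can compute the rationals $r$ with $r\in B$ and approximate from both sides.
\item Finally, $x\in A$: otherwise some listed interval places $x$ strictly inside the complement, and compactness then certifies a rational on the wrong side of $B$.
\end{enumerate}

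\textbf{$\mathsf{LS} \le_{\mathrm W} \mathsf{XC}_1$.}
\begin{enumerate}
\item Given $(L,I,F)$ with $L$ on $\mathbb N$, embed $L$ into $\mathbb{Q}\cap(0,1)$ in an order-preserving, computable way. To do this, place the $n$-th element in the middle of the gap determined by the previously placed elements, shrinking gaps by factor $3$. Call the resulting map $h$.
\item Let $A=\overline{\{h(y):\ldots\}}$ be the closed interval $[\sup h(I),\inf h(F)]$. Its complement is covered by the intervals $[0,h(i))$ for $i\in I$, $(h(f),1]$ for $f\in F$, and suitable enlargements. These are enumerable, so $A$ is a valid $\mathsf{XC}_1$ instance, nonempty because $I\lhd F$.
\item From a point $x\in A$, output $B=\{y: h(y)<x\}$ together with those $y$ where $h(y)=x$ and $y\in I$.
\item The main obstacle appears here: deciding $h(y)<x$ is only semi-decidable when $h(y)=x$. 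To handle it, I would arrange the embedding so that every $h(y)$ is isolated by a buffer gap, and ask $\mathsf{XC}_1$ instead for a point in a slightly modified interval that avoids all the rationals $h(y)$. This can be done by excising small open neighbourhoods of each $h(y)$ of rapidly decreasing size, while only shrinking, never emptying, $A$. If the interval degenerates to the single point $h(b)$ for a separator element $b$, keep it as is and let the ternary-gap structure decide.
\item With the buffers in place, comparing $x$ to each $h(y)$ is decidable, and $B$ is computable from $x$ uniformly.
\end{enumerate}

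I expect the obstacle in the last direction, namely making the backward map total and uniform when $x$ coincides with an embedded element, to require the most care.
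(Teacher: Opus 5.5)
Your reduction $\mathsf{XC}_1\le_{\mathrm W}\mathsf{LS}$ is essentially the paper's. The paper quotes a result giving rationals increasing to $\min A$ and decreasing to $\max A$; you get the same information directly from the compactness certificate. Your padding is fine if the pairs $(r,s)$ are ordered lexicographically and you read off the cut $\{r:(r,0)\in B\}$.

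The gap is in the converse reduction, steps 4--5. Excising open neighbourhoods of the points $h(y)$ does not produce an $\mathsf{XC}_1$ instance. Whenever some $y\notin I\cup F$ lies strictly between $I$ and $F$ (say $L=\mathbb{Q}$, $I=\{q<0\}$, $F=\{q>1\}$, $y=1/2$), $h(y)$ is an interior point of $A$, and removing a small neighbourhood of it disconnects $A$. What you would obtain is a reduction to closed choice $\mathsf{C}_{[0,1]}$, which is strictly stronger than $\mathsf{XC}_1$: every computable convex instance has a computable point, but this fails for co-c.e.\ closed sets in general. Shrinking $A$ only at its ends would keep it convex but cannot help. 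Such interior points $h(y)$ stay in $A$, and $\mathsf{XC}_1$ may return exactly $x=h(y)$. Then neither $h(y)<x$ nor $h(y)>x$ is ever confirmed, so your claim in step 5 that comparing $x$ with each $h(y)$ is decidable is false, with or without buffers.

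The missing idea is to change what $x$ is compared with, instead of trying to keep $x$ off the images: represent each element by \emph{two} points. The paper computes an order embedding $\iota$ of $L\times 2$, ordered lexicographically, into $\mathbb{Q}\cap[0,1]$. It then sets $A=\{x\in[0,1]:\iota(a,1)\le x\text{ for all }a\in I\text{ and }x\le\iota(b,0)\text{ for all }b\in F\}$. This is a closed interval with enumerable complement, and it is nonempty because $I\lhd F$ gives $\iota(a,1)<\iota(b,0)$.

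Given $x\in A$ and $\ell\in L$, at least one of the semi-decidable strict conditions $\iota(\ell,0)<x$ and $x<\iota(\ell,1)$ holds, since $\iota(\ell,0)<\iota(\ell,1)$. Search for both in parallel and put $\ell$ in $S$ iff the first is found. Then:
\begin{itemize}
\item if $a\in I$, the second condition is false, so $a\in S$;
\item if $b\in F$, the first condition is false, so $b\notin S$;
\item if $\ell\in S$ and $\ell'\lhd\ell$, then $\iota(\ell',1)<\iota(\ell,0)<x$ makes the second condition false for $\ell'$, so $\ell'\in S$.
\end{itemize}
Your buffer gaps would do exactly this job if you used their two endpoints in the roles of $\iota(y,0)$ and $\iota(y,1)$, both in defining $A$ and in the comparisons, rather than excising anything.
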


\begin{proof}
First we show that $\mathsf{XC}_1 \leq_{\mathrm{W}} \mathsf{LS}$. A special case of \cite[Proposition~3.4]{1} shows that from a convex closed set $A \subseteq [0,1]$ we can compute sequences of rational numbers $a_0 < a_1 < \dots$ and $b_0 > b_1 > \dots$ such that
\[
\sup\{a_n \mid n \in \mathbb{N}\} = \inf A
\quad\text{and}\quad
\inf\{b_n \mid n \in \mathbb{N}\} = \sup A.
\]
We use $\mathbb{Q} \cap [0,1]$ as the linear order, $\{a_n : n \in \mathbb{N}\}$ as the lower and
$\{b_n : n \in \mathbb{N}\}$ as the upper set of an input to $\mathsf{LS}$. 
The output of $\mathsf{LS}$ on this input is a Dedekind cut of some real number $x \in A$, which concludes the reduction.

To see that $\mathsf{LS} \leq_{\mathrm{W}} \mathsf{XC}_1$, we start with a linear order $L$, the lower set $I$ and the upper set
$F$. We can compute an order embedding
\[
\iota \colon L \times 2 \to \mathbb{Q} \cap [0,1],
\]
where $L \times 2$ is ordered lexicographically.
Let
\[
A := \{x \in [0,1] : (\forall a \in I\ \iota(a,1) \le x) \wedge (\forall b \in F\ \iota(b,0) \ge x)\}.
\]
We can compute $A$ as a closed set, and it is by construction an interval. We use $\mathsf{XC}_1$ to obtain some $x \in A$.

Now we construct a separating set $S \subseteq L$ by identifying for each $\ell \in L$ a true case among
$\iota(\ell,0) < x$ and $\iota(\ell,1) > x$. As $\iota(\ell,0) < \iota(\ell,1)$, at least one of the cases must hold.
In the former case, we let $\ell \in S$, in the latter we set $\ell \notin S$.
\end{proof}

The reason in the above proof we work with $L \times 2$ instead of $L$ is because it may happen that $x \in A$ is in the range of $\iota$. 
Mapping every element of $L$ to two distinct rationals ensures that we can compare at least one of them to any real number.

Proposition \ref{LSWei} implies that $\mathsf{LS}$, as problem, is not computable.
This leads again to the conclusion that $\mathsf{LS}$, as a reverse mathematics statement, has no uniform proof in $\RCA$.

\end{document}